\documentclass[11pt]{amsart}
\usepackage{amssymb,epsfig,newlfont,amsmath}

\setlength{\oddsidemargin}{0in} \setlength{\evensidemargin}{0in}
\setlength{\textwidth}{6.5in}

\usepackage[english]{babel}
\newtheorem{theorem}{Theorem}[section]
\newtheorem{lemma}[theorem]{Lemma}
\newtheorem{corollary}[theorem]{Corollary}
\newtheorem{proposition}[theorem]{Proposition}

\theoremstyle{definition}
\newtheorem{definition}[theorem]{Definition}
\newtheorem{example}[theorem]{Example}

\theoremstyle{remark}
\newtheorem{remark}[theorem]{Remark}

\numberwithin{equation}{section}

\newcommand{\Z}{\mathbb{Z}}
\newcommand{\Q}{\mathbb{Q}}
\newcommand{\R}{\mathbb{R}}
\newcommand{\C}{\mathbb{C}}
\newcommand{\A}{\mathbb{A}}

\newcommand{\sG}{\mathsf{G}}
\newcommand{\sP}{\mathsf{P}}
\newcommand{\sQ}{\mathsf{Q}}
\newcommand{\sR}{\mathsf{R}}

\newcommand{\sL}{\mathsf{L}}
\newcommand{\sN}{\mathsf{N}}
\newcommand{\sM}{\mathsf{M}}
\newcommand{\sS}{\mathsf{S}}
\newcommand{\sH}{\mathsf{H}}
\newcommand{\sB}{\mathsf{B}}
\newcommand{\sT}{\mathsf{T}}

\newcommand{\Ker}{\mathrm{Ker}}

\newcommand{\diag}{\mathrm{diag}}

\makeatletter
\def\revddots{\mathinner{\mkern1mu\raise\p@
\vbox{\kern7\p@\hbox{.}}\mkern2mu
\raise4\p@\hbox{.}\mkern1mu\raise7\p@\hbox{.}\mkern1mu}}
\makeatother

\numberwithin{equation}{section}
\numberwithin{figure}{section}
\numberwithin{table}{section}

\begin{document}

\title[Eisenstein series and the top degree cohomology of arithmetic groups]{Eisenstein series and the top degree cohomology of arithmetic subgroups of $SL_n/\Q$}

\author{Joachim Schwermer}
\address{Faculty of Mathematics, University Vienna, Oskar-Morgenstern-Platz 1, A-1090 Vienna, Austria  \and Max-Planck-Institute 
for Mathematics, 
Vivatsgasse 7, D-53111 Bonn, Germany.}
\email{Joachim.Schwermer@univie.ac.at}

\thanks{ The author  gratefully acknowledges the support of the Max-Planck-Institute for Mathematics, 
Bonn, in the academic year 2019/20}

\subjclass[2010]{Primary 11F75; Secondary 11F67, 11F70, 22E40,
22E55}

\date{March 5, 2020}



\begin{abstract}
The cohomology $H^*(\Gamma, E) $ of a torsion-free arithmetic subgroup $\Gamma$ of the  special linear  $\Q$-group $\sG = SL_n$ may be interpreted in terms of the automorphic spectrum of $\Gamma$. Within this framework, there is a decomposition of the cohomology into the cuspidal cohomology
and the  Eisenstein cohomology 
 The latter space is  decomposed according to the 
classes $\{\sP\}$ of associate proper parabolic $\Q$-subgroups of $\sG$. Each summand $H^*_{\mathrm{\{P\}}}(\Gamma, E)$ 
 is built up by Eisenstein series (or residues of such) attached to cuspidal automorphic forms
 on the Levi components of elements in $\{\sP\}$. 

The cohomology  $H^*(\Gamma, E) $ vanishes above the degree given by the cohomological dimension $\mathrm{cd}(\Gamma) = \frac{n(n-1)}{2}$. 
We are concerned with  the internal structure of the cohomology in this top degree. On the one hand, we explicitly describe the associate classes $\{\sP\}$ for which the corresponding summand $H^{\mathrm{cd}(\Gamma)}_{\mathrm{\{\sP\}}}(\Gamma, E)$ vanishes. On the other hand, in the remaining cases of associate classes 
we construct various families of non-vanishing Eisenstein cohomology classes which span $H^{\mathrm{cd}(\Gamma)}_{\mathrm{\{\sQ\}}}(\Gamma, \C)$. 

Finally, in the case of a principal congruence subgroup $\Gamma(q)$, $q = p^{\nu} > 5$, $p\geq 3$ a prime, we give lower bounds for the size of these spaces if not even a precise formula for its dimension for certain associate classes $\{\sQ\}$.

\end{abstract}

\maketitle

\section{Introduction}\label{intro}

\subsection{ } Given  a connected semi-simple algebraic $\Q$-group $\sG$ of $\textrm{rk}_\Q \sG > 0$, the group of real valued points $G = \sG(\R)$ is a  real Lie group. 
Let $K$ be a  maximal compact subgroup of $G$. The homogeneous space $G/K = X$, the symmetric space associated to $G$, is diffeomorphic to $\R^{d(G)}$, where $\mathrm{d}(G) = \dim G - \dim K$, hence $X$ is contractible.

An arithmetic subgroup $\Gamma$ of $\sG$ is a discrete subgroup of $G$, and $\Gamma$ acts properly discontinuously on $X$. If $\Gamma$ is torsion-free, the action is free, and the quotient space $\Gamma\setminus X$ is a smooth manifold of dimension $\mathrm{d}(G)$. 
Since the underlying algebraic $\Q$-group $\sG$ has positive $\Q$-rank, $\Gamma\setminus X$ is non-compact but of finite volume. It can be viewed as the interior of a natural compactification with boundary,  due to Borel and Serre \cite{BS}.  
This compactification is obtained  as the quotient under $\Gamma$ of  a $\sG(\Q)$-equivariant partial compactification $\overline{X}$.
 
The inclusion $\Gamma \setminus X \longrightarrow \Gamma \setminus \overline{X}$ is a homotopy equivalence.
The boundary $\partial(\Gamma \setminus \overline{X})$ is glued together out of faces $e'(Q)$,  where $\sQ \neq \sG$ ranges over a  set of representatives for the finitely many  $\Gamma$-conjugacy classes of  proper parabolic $\Q$-subgroups of $\sG$.  The codimension of a given face  in $\Gamma \setminus \overline{X}$ is the parabolic $\Q$-rank of $\sQ$. 
The closures of the faces $e'(Q)$ form a closed cover of the boundary $\partial(\Gamma \setminus \overline{X})$ whose nerve is the quotient under the natural action of $\Gamma$ of the Tits building $\mathcal{T}_{\sG}$ of proper parabolic $\Q$-subgroups of $\sG$. 
 
 Let $(\nu, E)$ be a finite-dimensional irreducible representation of the real Lie group $G$ on a complex vector space.
 Then the cohomology $H^*(\Gamma, E)$ is isomorphic to the  cohomology $H^*(\Gamma \setminus X, E)$ [computed, for example, via the DeRham complex of $E$-valued $\Gamma$-invariant differential forms on $X$]. Interpreting these latter groups  in terms of the automorphic spectrum of $\Gamma$,  by \cite{F1} resp. \cite{F--S}, there is a decomposition of the cohomology into the cuspidal cohomology
(i.e. classes represented by cuspidal automorphic forms for $\sG$ with respect to $\Gamma$) and the  Eisenstein cohomology (i.e. classes represented by forms arising from Eisenstein series for $\sG$ with respect to $\Gamma$).  
 The latter space is  decomposed according to the 
classes $\{\sP\}$ of associate proper parabolic $\Q$-subgroups of $\sG$, that is, eventually we obtain
 \begin{equation}\label{autodecomp}
H^*(\Gamma \setminus X, E) = H^*_{\textrm{cusp}}(\Gamma \setminus X, E) \oplus \bigoplus_{\{\sP\}}H^*_{\{\sP\}}(\Gamma \setminus X, E).
\end{equation}
Each summand indexed by  $\{\sP\}$
 is built up by Eisenstein series (appropriate derivatives or residues of such) attached to cuspidal automorphic forms
 on the Levi components of elements in $\{\sP\}$. Note that an associate class $\{\sP\}$ falls into finitely many $\sG(\Q)$-conjugacy classes.
 
 The role of the theory of Eisenstein series, especially their residues, in describing the structure of the cohomology $H^*(\Gamma \setminus X, E)$ was dealt with by various authors, among them,  Harder \cite{Ha3}, Franke \cite{F1}, J-S Li \cite{LS3}, and the author of this paper in \cite{S2}, \cite{S4}, \cite{S10}, or, jointly with Grbac, in \cite{gs2}, \cite{gs4}. Nevertheless, the full elucidation of the structure of the Eisenstein cohomology of such arithmetic quotients is still a major outstanding challenge.

 The cohomology  $H^*(\Gamma \setminus X, E) $ vanishes above the degree given by the cohomological dimension $\mathrm{cd}(\Gamma)$. Therefore it is natural, possibly even technically necessary, to start with this extreme non-trivial case.
 It is the aim of this paper, in the case of a congruence subgroup $\Gamma$ of the $\Q$-group $\sG = SL_n$,
to unfold the automorphic  view on the cohomology group $H^{\textrm{cd}(\Gamma)}(\Gamma \setminus X, E)$ and to gain some insight into  the internal structure of the cohomology in this top degree 
$\mathrm{cd}(\Gamma)  = \frac{n(n-1)}{2}$. This includes some quantitative results regarding the size of these cohomology spaces.

\subsection{ } With regard to associated classes $\{\sP\}$, we use that the $\sG(\Q)$-conjugacy classes of parabolic $\Q$-subgroups of $\sG$ are in bijection with the subsets $J \subset \Delta_{\Q}$ of the set of simple roots determined by the Levi decomposition $\sP_0 = \sL_0\sN_0$ of the parabolic subgroup of upper triangular matrices where the maximal split torus $\sL_0$ is the group of diagonal matrices in $\sG$. If $\sP_J = \sL_J\sN_J$ is the parabolic $\Q$-subgroup which corresponds to $J$, the roots of $\sL_J$ are those roots whose simple components are in $J$.

We denote by $\mathcal{J}^{\mathrm{cd}}$ the family of non-empty subsets $J \subset \Delta_{\Q} = \{\alpha_1, \ldots, \alpha_{n-1}\}$ subject to the condition that if $\alpha_i, \alpha_{i+1} \in J$, for some $i \in \{1, \dots, n-2\}$, then $\alpha_{i+2} \notin J$. As a consequence of the following result this family, together with the empty set, eventually describe the only sources for the possible existence of  Eisenstein cohomology classes in the degree of the cohomological dimension.

\begin{proposition} Let $\{\sP\}$ be an associate class of proper parabolic $\Q$-subgroups of $SL_n/\Q$. If   $\sP$ is neither the class $\{\sP_0\}$ of minimal parabolic $\Q$-subgroups
nor $\sP$ is $\sG(\Q)$-conjugate to a standard parabolic subgroup $\sP_J$ where the defining set $J$ is an element in $\mathcal{J}^{\mathrm{cd}}$  then the corresponding summand $H^{\mathrm{cd}(\Gamma)}_{\{\sP\}}(\Gamma \setminus X, E)$ in the decomposition (\ref{autodecomp}) vanishes in degree $\mathrm{cd}(\Gamma)$.
\end{proposition}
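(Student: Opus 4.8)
The plan is to reduce the vanishing statement to a question about the possible degrees in which Eisenstein cohomology supported by $\{\sP\}$ can live, and then to rule out the top degree $\mathrm{cd}(\Gamma) = \dim(\Gamma\backslash X)$ for all associate classes other than the ones listed. Recall that, by Franke's description (and its refinement in \cite{F--S}), the summand $H^*_{\{\sP\}}(\Gamma\backslash X, E)$ is computed from $(\mathfrak{g},K)$-cohomology of automorphic forms built out of cuspidal data on the Levi $\sL_P$ of $\sP = \sP_J$; concretely, each contribution is of the form $H^*(\mathfrak{l}_J, K_J; \pi_\infty\otimes \text{(symmetric algebra on }\aam\text{)}\otimes E)$ suitably twisted and then induced, where $\pi$ runs over cuspidal representations of $\sL_J(\mathbb{A})$. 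The total cohomological degree of a class splits as the degree coming from the cuspidal $(\mathfrak{l}_J,K_J)$-cohomology on the Levi, plus a contribution bounded by the number of ``$\partial/\partial s$''-derivatives one is allowed to take, which is at most $\dim\aam = |J|$ restricted by the order of the pole of the relevant intertwining operator, plus the dimension of the nilpotent piece entering through Kostant's theorem on $H^*(\mathfrak{n}_J, E)$.

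First I would make the numerology precise. Writing $G = SL_n(\mathbb{R})$, $K = SO(n)$, one has $\dim(\Gamma\backslash X) = \dim G/K = \frac{n(n-1)}{2}$. For a standard $\sP_J$ with Levi of type $GL_{n_1}\times\cdots\times GL_{n_r}$ (so $n_1+\cdots+n_r = n$ and the block sizes record the ``gaps'' in $J$), the symmetric space of the derived group of $\sL_J$ has dimension $\sum_i \frac{n_i(n_i-1)}{2} + (\text{split part})$, and $\dim(\sN_J) = \sum_{i<j} n_i n_j$. The key identity is that $\dim G/K$ equals $\dim(\text{symmetric space of }\sL_J) + \dim \sN_J + \ell_0$ where $\ell_0 = \dim\mathfrak{a}_J$ is the split rank; this is just the Iwasawa/Langlands decomposition at the level of symmetric spaces. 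So a class can reach the top degree only if the cuspidal cohomology on the Levi, the Kostant degree on $\mathfrak{n}_J$, and the polynomial/derivative degree on $\aam$ are each as large as they can possibly be, simultaneously. This forces three tight constraints: (i) each $GL_{n_i}$ factor must carry cuspidal cohomology in its own top degree, which by the classical computation (de George--Wallach / Borel--Wallach, or the explicit $GL_{n_i}(\mathbb{R})$ statement) is nonzero only for $n_i \le 2$ when $E$ is the trivial coefficient system — and this is exactly what cuts $J$ down to a set with no three consecutive simple roots, since a block of size $n_i \ge 3$ is forbidden; (ii) the number of allowed derivatives in the $\aam$-direction must equal $\ell_0 = |J|$, which is governed by the order of the pole of the Eisenstein series, i.e. by properties of the constant-term intertwining operators $M(w,\lambda)$ and the Langlands–Shahidi $L$-functions appearing therein; and (iii) Kostant's theorem must supply a Weyl-group element $w$ of the correct length $\ell(w) = \dim\mathfrak{n}_J$ with $w^{-1}$-dominance compatibility. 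Assembling (i)–(iii) and matching against the associate-class bookkeeping yields precisely the family $\mathcal J^{\mathrm{cd}}\cup\{\emptyset\}$; the associate class $\{\sP_0\}$ is set aside separately because there the Levi is a torus, the cuspidal input is a character, and the whole contribution comes from derivatives and Kostant degree alone — it has to be excluded from the ``vanishing'' conclusion since it genuinely can contribute.

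The step I expect to be the main obstacle is (ii): pinning down exactly when enough derivatives are available, equivalently controlling the order of the pole of the Eisenstein series $E(\phi,\lambda)$ (and hence the location along $\aam$ at which residual or properly-derived classes survive) in terms of $J$. This is where one must invoke the precise analytic properties of the standard intertwining operators for $SL_n$ — the Gindikin–Karpelevich factorization into rank-one operators, the resulting product of completed Riemann/Hecke $L$-functions $\xi(2\langle\lambda,\alpha^\vee\rangle)$ over positive roots made negative by $w$, and the known pole structure of $\xi(s)$ — together with the vanishing/non-vanishing of the archimedean $(\mathfrak{l}_J,K_J)$-cohomology with the shifted infinitesimal character forced by $E$. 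In practice I would argue contrapositively: if $J$ is \emph{not} in $\mathcal J^{\mathrm{cd}}$, then either some block has size $\ge 3$ (killing the Levi cuspidal top-degree cohomology by the $GL_m(\mathbb{R})$ computation), or the combinatorics of the blocks is such that the maximal number of poles/derivatives one can harvest falls strictly short of $\ell_0$, so the total attainable degree is $\le \mathrm{cd}(\Gamma) - 1$; in both cases $H^{\mathrm{cd}(\Gamma)}_{\{\sP\}}(\Gamma\backslash X, E) = 0$. The remaining care is purely organizational: translating between the $\sG(\mathbb{Q})$-conjugacy description by subsets $J$ and the coarser associate-class description $\{\sP\}$, and checking that the condition defining $\mathcal J^{\mathrm{cd}}$ is stable under the associate relation (which it is, since associate parabolics have isomorphic Levis and thus the same multiset of block sizes).
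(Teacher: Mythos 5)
Your overall strategy --- forcing every piece of the degree budget (Levi cuspidal degree, Kostant degree on $\mathfrak{n}_J$, split-torus contribution) to be maximal simultaneously and then ruling out large blocks --- is the same reduction the paper makes, but your execution contains a concrete error that makes the argument prove the wrong statement. In step (i) you assert that a $GL_{n_i}$-block can carry cuspidal cohomology in its own top degree $\frac{n_i(n_i-1)}{2}$ only for $n_i\le 2$, and you conclude that ``a block of size $n_i\ge 3$ is forbidden.'' This is false, and it is inconsistent with the definition of $\mathcal{J}^{\mathrm{cd}}$ you are supposed to recover: the condition ``if $\alpha_i,\alpha_{i+1}\in J$ then $\alpha_{i+2}\notin J$'' forbids three \emph{consecutive} simple roots in $J$, i.e.\ it allows Levi blocks of size up to \emph{three}. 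The correct input (Proposition 2.2 of the paper, via temperedness of cuspidal cohomological representations and the bound $v_o(m)$ of \cite[Proposition 3.5]{S3}) is that $v_o(m)=\frac{m(m-1)}{2}$ for $m=2$ \emph{and} $m=3$, and $v_o(m)<\frac{m(m-1)}{2}$ only for $m\ge 4$; indeed $SL_3(\R)$ has tempered cohomological representations with nonzero cohomology in degree $3=\mathrm{cd}$, and their cuspidal occurrence for congruence subgroups is exactly the Lee--Schwermer non-vanishing the paper invokes in Section \ref{Mixed}. With your cutoff you would be ``proving'' vanishing for associate classes containing a $GL_3$-block, which the paper shows do contribute. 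You also garble the combinatorial dictionary ($n_i\le 2$ corresponds to no \emph{two} adjacent roots in $J$, i.e.\ the block-2 condition of Section \ref{ParaII}, not to ``no three consecutive'') and the numerology $\dim\mathfrak{a}_{P_J}=|J|$: the split rank of $\sP_J$ is $(n-1)-|J|$.

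Beyond this, your step (ii) --- counting derivatives and poles via intertwining operators and Langlands--Shahidi $L$-functions --- is both left unresolved in your write-up and unnecessary. The paper's proof never touches the analytic behaviour of Eisenstein series for the vanishing direction: it only uses that the summand $H^{\mathrm{cd}(\Gamma)}_{\{\sP\}}$ is sourced in the cuspidal cohomology $H^{\mathrm{cd}(\Gamma)}_{\mathrm{cusp}}(e'(Q),E)$ of the faces $e'(Q)$, $\sQ\in\{\sP\}$, together with Kostant's theorem: in degree $\mathrm{cd}(\Gamma)$ the face contribution is $\bigoplus_{w}H^{\mathrm{cd}(\Gamma)-\ell(w)}_{\mathrm{cusp}}(\Gamma_M\setminus Z_M,F_{\mu_w})$ with $\ell(w)\le\dim N_J$, so the Levi must contribute in degree at least $\sum_i\frac{n_i(n_i-1)}{2}$, which exceeds $\sum_i v_o(n_i)$ as soon as some $n_i\ge 4$. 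If you repair (i) by replacing your cutoff with the $v_o(m)$ bound, you can delete (ii) entirely and your argument collapses to the paper's.
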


 In view of this result, we only have to deal  with the summands $H^{\mathrm{cd}(\Gamma)}_{\{\sP\}}(\Gamma \setminus X, E)$ in the decomposition (\ref{autodecomp}) where $\sP$ is $\sG(\Q)$-conjugate to a standard parabolic subgroup $\sP_J$ with $J \in \mathcal{J}^{\mathrm{cd}}$, or to $\sP_{\emptyset} = \sP_0$. As it will turn out,  in all these cases, the corresponding summand does not vanish for a congruence subgroup $\Gamma$ of sufficiently high  level.

 However, here is a more precise result concerning a subfamily of index sets in  $\mathcal{J}^{\mathrm{cd}}$. 
\begin{theorem}Let $\sP$ be a parabolic $\Q$-subgroup of $\sG/\Q = SL_n/\Q$, $n \geq 3$, whose $\sG(\Q)$-conjugacy class is represented by a standard parabolic $\Q$-subgroup $\sP_J$ indexed by the  set $J = \{\alpha_{i_1}. \ldots, \alpha_{i_r} \} \subset \Delta_{\Q}$, $J \neq \emptyset$, subject to the conditions that if $\alpha_i, \alpha_j \in J$ then $\vert i - j \vert \geq 2.$ 
Given a torsion-free principal congruence  subgroup $\Gamma \subset \sG(\Q)$,  the space
\begin{equation}
H^{\mathrm{cd}(\Gamma)}_{\{\sP\}}(\Gamma \setminus X; \C) = \bigoplus_{\sQ \in \{\sP\}/\Gamma}  H^{\mathrm{cd}(\Gamma)}(\Gamma \setminus X; \C)_{e'(Q)},
\end{equation}
generated by  regular Eisenstein cohomology classes as constructed  for each of the faces $e'(Q), \sQ \in \{\sP\}$, up to $\Gamma$-conjugacy,  is non-trivial and its dimension is given by
\begin{equation}
\dim_{\C} H^{\mathrm{cd}(\Gamma)}_{\{\sP\}}(\Gamma \setminus X; \C) = \mathrm{conj}_{\sG}[\{\sP\}] 
 \cdot \mathrm{conj}_{\Gamma}(\sP) \cdot \dim_{\C}H^{\mathrm{cd}(\Gamma)}_{\mathrm{cusp}}(e'(\sP); \C)
\end{equation}
where $\mathrm{conj}_{\sG}[\{\sP\}] = \binom{n - r}{r}$ and $\mathrm{conj}_{\Gamma}(\sP)$ denotes the number of $\Gamma$-conjugacy classes of $\sP$.
\end{theorem}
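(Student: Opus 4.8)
The plan is to reduce the statement to an assertion about a single boundary face $e'(\sQ)$ and there to run the standard construction of Eisenstein cohomology classes out of cuspidal data on the Levi quotient, the only real work being the control of the Eisenstein series at the point of evaluation. First I would account for the two combinatorial factors purely on the level of parabolic combinatorics: the $\sG(\Q)$-conjugacy classes of parabolic $\Q$-subgroups inside the associate class $\{\sP_J\}$ correspond to the distinct orderings of the composition of $n$ given by the block sizes of $\sL_J$; under the hypothesis $\vert i-j\vert\geq 2$ for $\alpha_i,\alpha_j\in J$ that composition has $n-r$ parts, $r$ of them equal to $2$ and $n-2r$ equal to $1$, so there are exactly $\binom{n-r}{r}$ of them, which is $\mathrm{conj}_{\sG}[\{\sP\}]$. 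Within any one $\sG(\Q)$-conjugacy class the parabolic subgroups fall into $\mathrm{conj}_{\Gamma}(\sP)$ classes under $\Gamma$; since $\Gamma$ is a normal principal congruence subgroup this count is the same for every $\sG(\Q)$-class in $\{\sP\}$ (it is controlled by the action of $\sG(\Z/q)$ on $\sG(\Z/q)/\sP(\Z/q)$), and these $\Gamma$-classes index precisely the faces $e'(\sQ)$ of $\partial(\Gamma\setminus\overline X)$. It therefore suffices to prove: (i) the regular Eisenstein cohomology classes attached to one face $e'(\sQ)$ span a subspace of $H^{\mathrm{cd}(\Gamma)}(\Gamma\setminus X;\C)$ of dimension $\dim_{\C}H^{\mathrm{cd}(\Gamma)}_{\mathrm{cusp}}(e'(\sP);\C)$ — the same number for every $\sQ$, since permuting the $GL_2$- and $GL_1$-blocks of the Levi does not change its cuspidal cohomology; (ii) the classes attached to distinct faces are linearly independent; (iii) together they exhaust the summand $H^{\mathrm{cd}(\Gamma)}_{\{\sP\}}$ in (\ref{autodecomp}).

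For (i) I would use the fibration $e'(\sQ)\to\Gamma_{L}\backslash X_{L}$ with nilmanifold fibre $\Gamma_{N}\backslash N$ and the resulting identification $H^{\ast}(e'(\sQ);E)\cong H^{\ast}\!\big(\Gamma_L\backslash X_L;H^{\ast}(\mathfrak n;E)\big)$, together with the fact that in the extreme degree $\mathrm{cd}(\Gamma)$ the cuspidal part of $H^{\mathrm{cd}(\Gamma)}(e'(\sP);E)$ is concentrated in the bidegree in which the nilpotent factor contributes its top cohomology $H^{\dim\mathfrak n}(\mathfrak n;E)$ and the Levi factor contributes cuspidal cohomology in the complementary degree, i.e.\ it is carried by cuspidal automorphic forms on the $GL_2$-blocks together with Hecke characters of prescribed infinity type on the $GL_1$-blocks. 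From such a datum one forms the attached Eisenstein series $E(\omega,\lambda)$ and evaluates at the point $\lambda_0$ dictated by $E$. The decisive point — and exactly where $\vert i-j\vert\geq 2$ is used — is that $\lambda_0$ is a \emph{regular} point: the potential poles of $E(\omega,\lambda)$ and of the intertwining operators come from the rank-one subquotients attached to pairs of roots adjacent in the Dynkin diagram, and the spread-out hypothesis removes precisely those configurations, so there is no residual phenomenon, $E(\omega,\lambda_0)$ is a closed non-exact $E$-valued form, and the map $\omega\mapsto[E(\omega,\lambda_0)]$ is injective because the constant term of $E(\omega,\lambda_0)$ along $\sP$ computed by Langlands' formula has leading term reproducing $\omega$ (the intertwining operators being holomorphic and invertible at $\lambda_0$).

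For (ii) and (iii) I would pass to the boundary. The restriction map $H^{\mathrm{cd}(\Gamma)}(\Gamma\setminus X;\C)\to H^{\mathrm{cd}(\Gamma)}(\partial(\Gamma\setminus\overline X);\C)$ is analysed via the spectral sequence of the closed cover of $\partial(\Gamma\setminus\overline X)$ by the closures of the faces; in cohomological degree $\mathrm{cd}(\Gamma)$ the contributions of the faces $e'(\sQ)$ with $\sQ\in\{\sP\}$ split off as a direct summand of the $E_1$-page, since no differential can reach them in this extreme degree. Combined with the constant-term computation of (i), which detects each Eisenstein class on its own face, this yields both the decomposition $H^{\mathrm{cd}(\Gamma)}_{\{\sP\}}=\bigoplus_{\sQ\in\{\sP\}/\Gamma}H^{\mathrm{cd}(\Gamma)}(\Gamma\setminus X;\C)_{e'(Q)}$ and the lower bound for its dimension. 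For the matching upper bound I would invoke Franke's description of $H^{\ast}_{\{\sP\}}$ together with the vanishing, in degrees exceeding $\mathrm{cd}(\Gamma)$, of the compactly supported cohomology of the boundary strata: this forces the boundary restriction to be injective on $H^{\mathrm{cd}(\Gamma)}_{\{\sP\}}$ with image exactly the part of $\bigoplus_{\sQ}H^{\mathrm{cd}(\Gamma)}_{\mathrm{cusp}}(e'(\sQ);\C)$ that admits a regular Eisenstein continuation, giving the equality of dimensions.

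\textbf{Main obstacle.} The delicate point is the interface between the automorphic input and the cohomology: verifying that $\lambda_0$ is a regular point of evaluation and that the relevant intertwining operators are holomorphic and non-vanishing there, so that the constructed forms are genuine non-trivial classes and, moreover, that the functional equations do not identify Eisenstein classes attached to different $\sG(\Q)$-conjugate (hence associate) parabolics — i.e.\ that the direct sum over $\{\sP\}/\Gamma$ does not collapse. The second point requiring care is the precise upper bound, namely surjectivity onto the Franke summand, which rests on the top-degree vanishing of the compactly supported cohomology of the boundary strata and on pinning down exactly which cuspidal data on the Levi admit a regular Eisenstein continuation in degree $\mathrm{cd}(\Gamma)$.
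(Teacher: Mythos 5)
Your overall architecture matches the paper's: construct regular Eisenstein classes face by face, separate the contributions of distinct faces via constant terms, and multiply the three factors $\mathrm{conj}_{\sG}[\{\sP\}]\cdot\mathrm{conj}_{\Gamma}(\sP)\cdot\dim H^{\mathrm{cd}(\Gamma)}_{\mathrm{cusp}}(e'(\sP);\C)$ (your composition count for $\binom{n-r}{r}$ is equivalent to the paper's bijection with $r$-subsets of $\{1,\dots,n-r\}$). But the step on which everything hinges --- holomorphy of $E(\phi,\lambda)$ at $\lambda_0=-w_{P_J}(\Lambda+\rho)\vert_{\frak{a}_{P_J}}=\tilde\Lambda+\rho\vert_{\frak{a}_{P_J}}$ --- is justified by a mechanism that does not work. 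You claim the poles come from ``rank-one subquotients attached to pairs of roots adjacent in the Dynkin diagram'' and that the hypothesis $\vert i-j\vert\geq 2$ removes them. That is not what the condition is for, and it is not how poles arise: the rank-one reductions of the intertwining operators are indexed by the roots of the split component $A_{P_J}$, i.e.\ by \emph{pairs of blocks} of the Levi, and their poles are governed by Rankin--Selberg factors $L(s,\pi_i\times\tilde\pi_j)$; two $GL_2$-blocks carrying isomorphic weight-two cusp forms produce a genuine pole of the corresponding rank-one operator regardless of how far apart the blocks sit. The condition $\vert i-j\vert\geq 2$ only guarantees that the Levi has blocks of size at most $2$, so that $H^{\mathrm{cd}(\Gamma)}_{\mathrm{cusp}}(e'(P_J),\C)$ can be nonzero at all. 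The paper's actual argument for holomorphy is representation-theoretic: a pole at $\lambda_0$ would force the Langlands quotient $J_{P_J,\pi,\lambda_0}$ to be unitarizable, while the estimate $(\lambda_0,\alpha)\geq(\rho_{P_J},\alpha)$ for all $\alpha\in\Delta(P_J,A_{P_J})$ violates Wallach's unitarity criterion (using that $\pi$ is tempered and nontrivial and $\sP_J\neq\sP_0$). Without some such argument your construction in step (i) is not established.

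Two further points. First, you never supply the non-triviality: the theorem needs $H^{\mathrm{cd}(\Gamma)}_{\mathrm{cusp}}(e'(\sP);\C)\neq 0$, which the paper gets from the Eichler--Shimura isomorphism and the non-vanishing of $S_2(\Delta(m))$ on each $GL_2$-block --- this is exactly where the level restriction ($m>5$ in the body of the paper) enters, and your proposal is silent on it. Second, for the directness of the sum over $\{\sP\}/\Gamma$ you flag the danger that functional equations might identify classes from different faces but do not resolve it; the paper handles the two cases separately (for $\sQ$ that is $\sG(\Q)$-conjugate but not $\Gamma$-conjugate to $\sP_J$, the intertwining operator vanishes identically by Harish-Chandra's Lemma 106; for $\sQ$ associate but not $\sG(\Q)$-conjugate, a weight computation shows the constant term lands in a weight space that is zero). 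Finally, your step (iii) --- exhaustion of the Franke summand via vanishing of compactly supported boundary cohomology --- is not needed for the statement as formulated (the space in question is by definition the span of the constructed classes) and is not carried out in the paper.
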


Given a prime power $q = p^{\nu} > 2$, the number $\mathrm{conj}_{\Gamma(q)}(\sP)$ 
 is determined in Lemma \ref{Gamclasses}.

The remaining cases  of index sets in  $\mathcal{J}^{\mathrm{cd}}$ are dealt with in a similar way to a certain extent in Section \ref{Mixed}. Finally, in the case of a principal congruence subgroup $\Gamma(q)$, $q = p^{\nu} > 5$, $p\geq 3$ a prime, we give lower bounds for the size of these spaces if not even a precise formula for its dimension for certain associate classes $\{\sQ\}$.

 If $\{\sP\} =\{\sP_0\}$ is the associate class of minimal parabolic $\Q$-subgroups of $SL_n/\Q$, represented by the standard parabolic of upper triangular matrices, we have already given a non-vanishing result for the space $H^{\mathrm{cd}(\Gamma)}_{\{\sP_0\}}(\Gamma \setminus X, E)$ in the thesis work  \cite{S1}, \cite{S1CR} in the case of the trivial coefficient system, and in  \cite[Section 7]{S4} in the generic case. This was completed by giving a lower bound for the dimension of this space. For the sake of completeness, we briefly review this result in the case $E = \C$ in Section \ref{minpar}.

In Section \ref{Conclude}, we compare our automorphic approach to the structure of the top degree cohomology group $H^{\mathrm{cd}(\Gamma)}(\Gamma \setminus X, E)$ with prior work  where one uses the relation with the Steinberg module  and its realization as the reduced cohomology of the Tits building attached to $SL_n/\Q$.
Finally, we indicate how our results can possibly be extended to the special linear group over a number field or even to other groups than $SL_n$.

\specialsection*{Notation and conventions}

(1) Let $\mathbb{Q}$ be the field of rational numbers. We denote
by $V$ the set of places of $\mathbb{Q}$, and by $V_f$ the set of
non-archimedean places. The archimedean place is denoted by $v=\infty$. Let
$\mathbb{Q}_v$ be the completion of $\mathbb{Q}$ at $v$, and
$\mathbb{Z}_v$ the ring of integers of $\mathbb{Q}_v$ for $v\in
V_f$. Let $\mathbb{A}$ (resp.~$\mathbb{I}$) be the ring of adeles
(resp.~the group of ideles) of $\mathbb{Q}$. We denote by
$\mathbb{A}_f$ the finite adeles.

(2) The algebraic groups considered are linear. If $\sH$ is an algebraic group defined over a field $k$, and $k'$ is a commutative $k$-algebra, we denote by $\sH(k')$ the group of $k'$-valued points of $\sH$. If $\sH$ is a connected  $\Q$-group, the group of real points $\sH(\R)$, endowed with the topology associated to the one of $\R$, is a real Lie group. We denote its Lie algebra by the corresponding small gothic letter $\frak{h}$.

(3) Given a connected algebraic $\Q$-group $\sG$, we put $^0G = \cap_{\chi \in X_{\Q}(\sG))} \text{ker} \chi^2$ where $X_{\Q}(\sG)$ denotes the group of $\Q$-morphisms $\sG \rightarrow GL_1$.  The group is defined over $\Q$ and normal in $\sG$. Any character is trivial on the unipotent radical $R_u(\sG)$ of $\sG$, and, given any Levi $\Q$-subgroup  $\sL$ of $\sG$, we have $^0\sG =\; ^0\sL \ltimes R_u(\sG)$. The real Lie group $^0\sG(\R)$ contains each compact subgroup of $\sG(\R)$ and each arithmetic subgroup of $\sG$.

\vspace{0.25cm}

\section{Arithmetic quotients and their cohomology}\label{arithquo}
Let $\Gamma$ be an arithmetically defined  subgroup of a connected semi-simple algebraic $\Q$-group $\sG$ of positive $\Q$-rank. Then it can be viewed as a discrete subgroup of the real Lie group $G = \sG(\R)$. Given a maximal compact subgroup $K \subset G$, the associated  symmetric space is $X = G/K$ on which $G$, and thus $\Gamma$, acts properly. The arithmetically defined quotient space $\Gamma \setminus X$ is non-compact but has finite volume; it may be interpreted as the interior of a compact orbifold with corners. 
These finitely many corners are parametrized by the $\Gamma$-conjugacy classes of proper parabolic $\Q$-subgroups of $\sG$.
We briefly review this construction and its role within investigations regarding the cohomology of  $\Gamma \setminus X$. In particular, we describe the structure of such a corner $e'(P)$ as a fibre bundle and its cohomology. The cuspidal cohomology of $e'(P)$ is the source for the eventual construction of cohomology classes for $\Gamma \setminus X$ which are represented by Eisenstein series or residues of such.

\subsection{Parabolic subgroups, Levi subgroups, and roots}

 Let $\sG$ be a connected semi-simple algebraic group defined over
$\Q$. We assume that $\sG$ has $\Q$-rank greater than zero. Fix a minimal parabolic subgroup $\sP_0$ of $\sG$ defined over
$\Q$ and a Levi subgroup $\sL_0$ of $\sP_0$ defined over $\Q$. 
By definition, a standard parabolic subgroup $\sP$ of $\sG$ is
a parabolic subgroup $\sP$ of $\sG$ defined over $\Q$ that contains
$\sP_0$. Analogously, a standard Levi subgroup $\sL$ of $\sG$ is a Levi
subgroup of any standard parabolic subgroup $\sP$ of $\sG$ such that
$\sL$ contains $\sL_0$. A given standard parabolic subgroup $\sP$ of $\sG$
has a unique standard Levi subgroup $\sL$. We denote by $\sP = \sL_{\sP}\sN_{\sP}$ the
corresponding Levi decomposition of $\sP$ over $\Q$. When the dependency  on the parabolic subgroup is clear from the context we suppress the subscript $\sP$ from the notation.

Let $\sP$ be a parabolic $\Q$-subgroup of $\sG$,  and let $\sN$ be the unipotent radical of $\sP$. We denote by  $\kappa: \sP \longrightarrow \sP/\sN =: \sM$  the canonical projection of $\sP$ on the reductive $\Q$-group $\sM$. Let $\sS_{\sP}$ be the maximal central $\Q$-split torus of $\sM$, and let $S_P$ be denote the identity component of $\sS_{\sP}(\R)$. A subgroup $A$ of a Levi subgroup of $P = \sP(\R)$ such that $A$ is mapped under $\kappa$ isomorphically on $S_P$ is called a split component of $P$. Note that a Levi subgroup of $P$ is isomorphic via $\kappa$  to the group of real points $\sM(\R)$. Two split components of $P$ are conjugate under $N = \sN(\R)$.

Fix a maximal compact subgroup $K$ of the real Lie group $G = \sG(\R)$. We denote by $A_P$ the uniquely determined split component of $P$ which is stable under the Cartan involution $\Theta_K$ attached to $K$. Let  $M = Z_G(A_P)$ defined to be the centralizer of $A_P$ in $G$; it is the uniquely determined $\Theta$-stable Levi  subgroup of $P$. Then we have the decomposition $P = M \ltimes N$ as a semi-direct product.  Analogously we have $ P = A_P \ltimes\; ^0P$.  

The projection $\kappa$ induces  a canonical isomorphism $\mu: M \longrightarrow \sM(\R)$; we denote by $^0M$ the inverse image under $\mu$ of $^0\sM(\R)$. Thus, we obtain $M = \;^0M \rtimes A_P$. Since $M$ is $\Theta_K$-stable, we have $K \cap P = K \cap\; ^0M$, and $K \cap P$ is a maximal compact subgroup of $^0M, M$, and $P$.  By definition, the parabolic prk$P$ of $P$ is 
the dimension of $A_P$.

A parabolic pair  is defined to be a pair $(P, A)$ which is given by the group of  real points $P$ of a parabolic subgroup $\sP$ of $\sG$ together with a split component $A$ of $P$. Having fixed a minimal parabolic
$\Q$-subgroup $\sP_0$ of $\sG$,  a parabolic pair $(P, A)$ is called standard if $P_0 \subset P$, $A_P \subset A_0$ where $A_0: = A_{P_0}$. A parabolic pair $(P, A)$ is said to be semi-standard if $A \subset A_0$.

Let $\frak{h}$ be a Cartan subalgebra of $\frak{g}$ which contains $\frak{a}_0$. We denote by $H$  the corresponding Cartan subgroup $Z_G(\frak{h})$ of $G$. Let $\Phi = \Phi(\frak{g}_\C, \frak{h}_\C)$ be the set of roots of $\frak{g}_\C$ with respect to $\frak{h}_\C$, and let $\Phi_{\R} = \Phi_{\R}(\frak{g}_\C, \frak{a}_{0, \C})$ be the set of $\R$-roots, that is, the set of roots of $\frak{g}_\C$ with respect to $\frak{a}_{0, \C}$. Occasionally we may view the elements of $\Phi$ (resp. $\Phi_{\R})$ as roots of $G(\C)$ with respect to $H$ (resp. $A_0$). Given a parabolic pair $(P, A)$, we denote by $\Phi(P, A)$ the set of roots of $P$ with respect to $A$, and by $\Delta(P, A)$ the set of simple roots in $\Phi(P, A)$. Similarly, we will make no disctinction between a character of $A$ and its differential.

 Since we  have fixed a minimal parabolic $\Q$-subgroup $\sP_0$ of $\sG$, together with the unique split component $A_0$ of $P_0$ which is $\Theta_k$-stable, the order on $\Phi_{\R}$ is determined by the condition that the set of positive roots $\Phi^{+}_{\R}$ is equal $\Phi(P_0, A_0)$. We choose an order on $\Phi$ which is compatible  with this order on $\Phi_{\R}$, that is, the restriction of a positive element is positive. Given this order on $\Phi$, we denote by $\Phi^+$ (resp. by $\Delta$) the set of positive roots (resp. simple roots).
 
 Let $(P, A)$ be a semi-standard parabolic pair. Then $\frak{b}: = \frak{h} \cap ^0\frak{m}$ is a Cartan subalgebra of the Lie algebra $^0\frak{m}$ of $^0M$, and one has a direct sum decomposition $\frak{h} = \frak{b} \oplus \frak{a}$. We may (and will) identify $\frak{a}_\C^*$ (resp. $\frak{b}_\C^*$) with the space of linear forms on $\frak{h}$ which vanish on $\frak{b}$ (resp. $\frak{a}$). Thus, there is a canonical isomorphism 
 \begin{equation}
 \frak{h}_\C^* \cong \frak{b}_\C^* \oplus \frak{a}_\C^*.
  \end{equation}
Let $\Phi_M = \Phi(\frak{m}_\C, \frak{h}_\C) = \Phi(^0\frak{m}_\C, \frak{b}_\C)$ denote the set of roots of $\frak{m}_\C$ with respect to $\frak{h}_\C$. Then we can identify $\Phi_M$ with the set of roots which vanish on $\frak{a}$, and $\Delta_M = \Delta \cap \Phi_M$ is the set of simple roots with respect to the order induced by the one on $\Phi$.

Given a parabolic pair $(P, A)$, the element $\rho_P \in \frak{a}^*$ is defined by $\rho_P(a) = (\det \text{Ad}(a)_{\vert \frak{n}})^{1/2}$, $a \in A$,  where $\frak{n}$ denotes the Lie algebra of the unipotent radical $N$ of $P$. As usual, we put 
\begin{equation}
\rho = \frac{1}{2} \sum_{\alpha \in \Phi^+} \alpha \quad \text{and} \quad \rho_{^0M} = \frac{1}{2} \sum_{\alpha \in \Phi^+_M} \alpha.
\end{equation}
Since the parabolic pair $(P, A)$ is semi-standard, we see $\rho_{\vert \frak{a}_0} = \rho_{P_0}$ and $\rho_{\vert \frak{b}} = \rho_{^0M}$. If $(P, A)$ is standard, we have $\rho_{\vert \frak{a}} = \rho_{P}$.

\subsection{Arithmetic quotients and the adjunction of corners}
 Given  a connected semi-simple algebraic $\Q$-group $\sG$ of rk$_\Q \sG > 0$, the group of real valued points $G = \sG(\R)$ is a  real Lie group. Let $K$ be a  maximal compact subgroup of $G$. Since any two of these are conjugate to one another by an inner automorphism, the homogeneous space $G/K = X$ may be viewed as the space of maximal compact subgroups of $G$.  The space $X$ is diffeomorphic to $\R^{d(G)}$, where $\mathrm{d}(G) = \dim G - \dim K$, hence $X$ is contractible. 

An arithmetic subgroup $\Gamma$ of $\sG$ is a discrete subgroup of $G$, and $\Gamma$ acts properly discontinuously on $X$. If $\Gamma$ is torsion-free, the action is free, and the quotient space $\Gamma\setminus X$ is a smooth manifold of dimension $d(G)$. Since the underlying algebraic $\Q$-group $\sG$ has positive $\Q$-rank, $\Gamma\setminus X$ is non-compact but of finite volume. It can be viewed as the interior of a natural compactification with boundary, the adjunction of corners, due to Borel and Serre \cite{BS}. 

This compactification is obtained  as the quotient under $\Gamma$ of  a $\sG(\Q)$-equivariant partial compactification $\overline{X}$. The inclusion $\Gamma \setminus X \longrightarrow \Gamma \setminus \overline{X}$ is a homotopy equivalence.
The boundary $\partial(\Gamma \setminus \overline{X})$ is glued together out of faces $e'(Q)$, where $\sQ$ ranges over a set of representatives for the $\Gamma$-conjugacy classes of proper parabolic $\Q$-subgroups of $\sG$.

 A single face is described by the fibration (induced from the projection $\kappa: \sP \longrightarrow \sP/\sN$)
\begin{equation}\label{fibration}
\Gamma_N \setminus N \longrightarrow e'(P) = \Gamma_P \setminus ^0P / K \cap P \longrightarrow \Gamma_M \setminus Z_M
\end{equation}
over the homogenous  space $\Gamma_M \setminus Z_M$ with compact fibres where $Z_M = (^0P/N)/\kappa (K \cap P)$, $\Gamma_P = \Gamma \cap P,$ $\Gamma_N = \Gamma \cap N$, and $\Gamma_M = \kappa (\Gamma \cap P)$.
In the sequel we will identify the base space $\Gamma_M \setminus Z_M$ with $\mu^{-1}(\Gamma_M)\setminus ^0M /K\cap M$.

\subsection{Cohomology} Let $(\nu, E)$ be a finite-dimensional irreducible representation of the real Lie group $G$ on a real or complex vector space, and let $(\Omega^*(X, E), d)$ be the complex of smooth $E$-valued differential forms on $X$. Given an arithmetic torsion-free subgroup $\Gamma$ of $\sG$, the singular cohomology $H^*(\Gamma \setminus X, \tilde{E})$ of the manifold with coefficients in the local system defined by $(\nu, E)$ is canonically isomorphic to the cohomology $H^*(\Omega(X, E)^{\Gamma})$, the  de Rham cohomology. 

These latter cohomology groups  may be interpreted in terms of relative Lie algebra cohomology groups. Indeed, let $C^{\infty}(G)$ denote the space of $C^{\infty}$-functions on $G$. We endow $C^{\infty}(G) \otimes E$ with the $G$-module structure given as the tensor product of the right regular representation  $l$ of $G$ on $C^{\infty}(G)$ and of $(\nu, E)$. The space $C^{\infty}(G)_K$ of all $C^{\infty}$-vectors $f$ for which $f l(K)$ is a finite-dimensional subspace of $C^{\infty}(G)$ is preserved under the action of the Lie algebra $\frak{g}$, obtained by differentiation of $l$, and compatible  with the action of $K$. Therefore, since $C^{\infty}(G)_K$ is locally finite as a $K$-module, the space of $K$-finite vectors $C^{\infty}(G)_K$ is $(\frak{g}, K)$-module. Taking into account the action of the discrete torsion-free group $\Gamma$, there is an isomorphism of $\Omega^*(G/K, E)^{\Gamma}$ onto the complex
$C^*(\frak{g}, K, C^{\infty}(G)_K \otimes E)$, hence there is an isomorphism
\begin{equation}
 H^*(\Gamma \setminus X, \tilde{E}) \cong H^*(\Omega^*(G/K, E)^{\Gamma}) \tilde{\longrightarrow} H^*(\frak{g}, K, C^{\infty}(G)_K \otimes E).
\end{equation}

Let $L^2(\Gamma\setminus G)$ be the space of square-integrable functions (modulo the centre) on $\Gamma\setminus G$, viewed as usual as a unitary $G$-module via right-translations. The theory of Eisenstein series plays a fundamental role in the description of the spectral decomposition of $L^2(\Gamma \setminus G)$. This space is the direct sum of the discrete spectrum $L^2_{\mathrm{dis}}(\Gamma \setminus G)$, i.e. the span of the irreducible closed $G$-submodules of $L^2(\Gamma \setminus G)$, and the continuous spectrum $L^2_{\mathrm{ct}}(\Gamma \setminus G)$.
The former space contains as a $G$-invariant subspace the space $L^2_{\mathrm{cusp}}(\Gamma \setminus G)$ of cuspidal automorphic forms, i.e. the  cuspidal spectrum. The orthogonal complement in $L^2_{\mathrm{dis}}(\Gamma \setminus G)$ is the residual spectrum, to be denoted $L^2_{\mathrm{res}}(\Gamma \setminus G)$, thus, there is a direct sum decomposition
\begin{equation}
L^2_{\mathrm{dis}}(\Gamma \setminus G) = L^2_{\mathrm{cusp}}(\Gamma \setminus G) \oplus L^2_{\mathrm{res}}(\Gamma \setminus G).
\end{equation} 
The discrete spectrum is a countable Hilbert direct sum of irreducible $G$-modules with finite multiplicities, say\footnote{By abuse of notation, we write $m(\pi, \Gamma) H_{\pi}$ for the direct sum of $m(\pi, \Gamma)$ copies of $H_{\pi}$.}
\begin{equation}\label{disdec}
L^2_{\mathrm{dis}}(\Gamma \setminus G) = \bigoplus_{\pi \in \hat{G}} m(\pi, \Gamma) H_{\pi}.
\end{equation}

By the work of Langlands each of the constituents of the  residual spectrum $L^2_{\mathrm{res}}(G_{\infty}/\Gamma)$ can be structurally described in terms of residues of Eisenstein series attached to irreducible representations occurring in the discrete spectra of the Levi components of proper parabolic subgroups of $G$.

Our object of concern is the cohomology of $\Gamma$ with values in $E$, to be given  in terms of relative Lie algebra cohomology
as
\begin{equation}
H^\ast(\Gamma, E) = H^\ast(\Gamma \setminus X, \tilde{E} ) =  H^\ast(\mathfrak {g}, K; C^{\infty}(\Gamma \setminus G) \otimes_{\C} E)
\end{equation}
 where $C^{\infty}(\Gamma \setminus G) $ denotes the space of $C^{\infty}$-functions on $\Gamma \setminus G$.\footnote{For a differentiable $G$-module $F$ we usually put $H^*(\mathfrak{g}, K, F) = H^*(\mathfrak{g}, K, F_{K})$ where $F_{K}$ denotes the space of $K$-finite vectors in $F$, $K$ a maximal compact subgroup in $G$.}
This cohomology space contains as a natural subspace the square integrable cohomology 
$H^\ast_{(\mathrm{sq})}(\Gamma, E)$,  defined as the image of the homomorphism 
\begin{equation} 
j_{\mathrm{dis}}:  H^{\ast}(\frak{g}, K; L^{2,\infty}_{\mathrm{dis}}(\Gamma \setminus G) \otimes E)\longrightarrow  H^{\ast}(\frak{g}, K; C^{\infty}(\Gamma \setminus G) \otimes E)
\end{equation}
induced in cohomology by the natural inclusion  of the space of $C^{\infty}$-vectors in the discrete spectrum of $\Gamma \setminus G$  into $C^{\infty}(\Gamma \setminus G)$.  In general, the homomorphism $j_{\mathrm{dis}}$ is not injective whereas the homomorphism induced by the  inclusion of the space of $C^{\infty}$-vectors in the  cuspidal spectrum into $C^{\infty}(\Gamma \setminus G)$ is injective; its image is called the cuspidal cohomology of $\Gamma$, to be denoted $H^\ast_{\mathrm{cusp}}(\Gamma \setminus X, E)$

Given a coefficient system $E$, there are, up to infinitesimal equivalence,  only finitely many irreducible unitary representations $(\pi, H_{\pi})$ with non-zero cohomology 
$H^\ast(\mathfrak {g}, K; H_{\pi} \otimes_{\C} E)$. Thus, decomposition \ref{disdec} gives rise to a decomposition of the cuspidal cohomology
\begin{equation}
H^\ast_{(\mathrm{cusp})}(\Gamma \setminus X, E) \cong \bigoplus_{\pi \in \hat{G}} m(\pi, \Gamma) H^\ast(\mathfrak {g}, K; H_{\pi} \otimes_{\C} E)
\end{equation}
as a finite algebraic direct sum.

\subsection{Cuspidal cohomology for $GL_m/\Q$ and $SL_m/\Q$}\label{cusp} The following result (cf.\cite[Theorem 3.3]{S3}) is fundamental for a better comprehension of the cuspidal cohomology.
\begin{proposition}Let $\Gamma$ be an arithmetic subgroup of $GL_m/\Q$, and fix $(\nu, E)$. Up to infinitesimal equivalence, there is at most one irreducible unitary representation 
$(\sigma, H_{\sigma})$ of $GL_m(\R)$ such that the relative Lie algebra cohomology $H^*(\frak{gl}_m(\R), O(m); H_{\sigma}^{\infty} \otimes E) \neq \{0\}$, and $(\sigma, H_{\sigma})$ occurs with non-zero multiplicity in the cuspidal spectrum of $\Gamma \setminus GL_m(\R)$. Such a representation is necessarily tempered.
\end{proposition}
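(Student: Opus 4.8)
The plan is to combine the Vogan--Zuckerman classification of the unitary $(\mathfrak{gl}_m(\R), O(m))$-modules carrying non-trivial relative Lie algebra cohomology with the fundamental fact that the archimedean component of a cuspidal automorphic representation of $GL_m$ is generic.

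First I would invoke Wigner's lemma (as in Borel--Wallach): if $H^*(\mathfrak{gl}_m(\R), O(m); H_{\sigma}^{\infty} \otimes E) \neq \{0\}$, then the infinitesimal character of $\sigma$ must coincide with that of the contragredient $E^{\vee}$, so it is pinned down once $(\nu, E)$ is fixed, and it is regular and integral since $E$ is irreducible. Next, by the theorem of Vogan--Zuckerman, every irreducible unitary $(\mathfrak{gl}_m(\R), O(m))$-module with non-zero cohomology in the coefficient system $E$ is cohomologically induced, $A_{\mathfrak{q}}(\lambda)$, from a $\theta$-stable parabolic subalgebra $\mathfrak{q}$. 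For $GL_m(\R)$ these representations are known explicitly: up to a one-dimensional twist each is a unitary parabolic induction of Speh representations $u(\delta, k)$ assembled from (limits of) discrete series $\delta$ of $GL_2(\R)$ and characters of $GL_1(\R)$. I would then check, by matching the $\mathfrak{q}$-data against this description, that with the prescribed regular integral infinitesimal character exactly one of these is tempered --- the one in which all Speh parameters $k$ equal $1$, i.e.\ a genuine unitary induction $\mathrm{Ind}_Q^{GL_m(\R)}(\delta_1 \otimes \cdots \otimes \delta_s)$ from discrete series blocks with no Speh thickening --- while every other one is non-tempered.

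The second ingredient is genericity. By the theory of the Whittaker--Fourier expansion of cusp forms on $GL_m$ along the mirabolic subgroup (Shalika, Piatetski-Shapiro), a cuspidal automorphic representation of $GL_m$ is generic; hence the representation $\sigma$ occurring in the cuspidal spectrum of $\Gamma \setminus GL_m(\R)$ --- being the archimedean component of such a representation, whether one argues adelically or directly with the Fourier expansion --- possesses a Whittaker model. Among the cohomological $A_{\mathfrak{q}}(\lambda)$ only the tempered one is generic: a Speh representation $u(\delta, k)$ with $k \geq 2$, and a fortiori any proper induction involving one, is a proper Langlands quotient of its standard module and therefore carries no Whittaker model. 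Combining the two ingredients, $\sigma$ must be this unique tempered cohomological representation, which simultaneously yields the asserted uniqueness up to infinitesimal equivalence and the temperedness.

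I expect the main obstacle to be the combinatorial core of the first step: showing that, for a fixed regular integral infinitesimal character, the cohomological unitary representations of $GL_m(\R)$ form precisely the finite list arising from the $A_{\mathfrak{q}}(\lambda)$, that exactly one of them is tempered, and that all the others fail to be generic. This rests on matching Vogan--Zuckerman data with the Langlands classification, together with the criterion that an irreducible admissible representation of $GL_m$ over a local field is generic precisely when its standard module is irreducible (equivalently, its Langlands data involve no Speh degeneration). The remaining inputs --- Wigner's lemma, the Vogan--Zuckerman classification, and Shalika's Fourier expansion of cusp forms --- enter as direct citations, while Speh's determination of the unitary dual of $GL_m(\R)$ and the known $(\mathfrak{g}, K)$-cohomology of the modules $A_{\mathfrak{q}}(\lambda)$ supply the rest.
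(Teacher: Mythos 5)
Your argument is correct and is essentially the proof of this result as it appears in the literature: the paper itself does not prove the proposition but quotes it from \cite[Theorem 3.3]{S3}, where the argument is exactly the combination you describe --- Wigner's lemma pins the infinitesimal character, the Speh/Vogan--Zuckerman classification lists the cohomological unitary representations of $GL_m(\R)$, the Whittaker--Fourier expansion of cusp forms (Piatetski-Shapiro, Shalika) forces genericity, and Vogan's largeness criterion singles out the unique tempered member of the list as the only generic one. The only point deserving care, which your ``matching the $\mathfrak{q}$-data'' step must absorb, is that for $m$ odd the sign character on the residual $GL_1$-block is pinned down by the $(\mathfrak{gl}_m(\R), O(m))$-cohomology condition rather than by temperedness or the infinitesimal character alone.
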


As a tempered representation $(\sigma, H_{\sigma})$ has vanishing Lie algebra cohomology outside a certain range centered around the middle dimension, see \cite[III, 5.3.]{BW}.The bounds of this range, depending only on $m$, are determined in \cite[Proposition 3.5.]{S3}. We only need the upper bound which is given as follows:
\begin{equation}
v_o(m): = \frac{(m + 1)^2 - 1}{4} - 1 \; ,\text{if}\; m \; \text{even, and}  \quad v_o(m): =  \frac{(m + 1)^2}{4} - 1\; ,\text{if}\; m \; \text{odd}.
\end{equation}
We observe that for $m > 3$ the cohomological dimension $ \frac{m(m-1)}{2}$ of an arithmetic group in $GL_m(\Q)$ is always larger than $v_o(m)$. For $m = 2, 3$, we have equality.

An analogous results holds in the case of the group $SL_n^{\pm}(\R)$, and, with a slight modification regarding the number of irreducible unitary representations which are cuspidal and cohomological, for the group $SL_n$ (see  \cite[Proposition 3.5.]{S3}).

\begin{remark} When $m \geq 3$, the group $SL_m$ has the congruence subgroup property. Therefore, we can infer from the adelic result of Franke and the preceding discussion, that, for $m > 3$,  there is no cuspidal cohomology in degree $\mathrm{cd}(\Gamma) = \frac{m(m-1)}{2}$, thus, the total cohomology in this degree comes from the faces in the Borel-Serre compactification. Which ones actually contribute is 
determined later on.
\end{remark}
 
\begin{example}We consider the case of the group $GL_2(\R)$, that is, $m = 2$.
Let $V(r), r \geq 2,$ denote the irreducible two-dimensional representation of the orthogonal group $O(2)$ which is fully induced by the character $k_{\theta} \mapsto e^{ir\theta}$ of the subgroup $SO(2)$ of rotations $k_{\theta}, \theta \in [0, 2\pi]$, in $O(2)$ of index two.
Given an integer $\ell \geq 2$, we denote by  $D_{\ell}$ the discrete series representation of $GL(2,\R)$ of lowest $O(2)$-type $V(\ell)$. The  representation $D_{\ell}$ is  square-integrable and characterized by the fact that its restriction to the maximal compact subgroup $O(2)$ of $GL(2,\R)$ decomposes as an algebraic sum of the form $$D_{\ell} \vert _{O(2)} \cong \oplus_{r \in \Sigma(\ell)} V(r), \quad \quad \Sigma(\ell) = \{l \in \Z \; \vert \; l \equiv \ell \bmod 2, l \geq \ell \}.$$

In this labelling of the discrete series representations of $GL(2,\R)$ the Harish-Chandra parameter of $D_{\ell}, \ell \geq 2,$ is $\ell-1$.

Let $(\sigma_k, F_{k}), k \geq 0,$ be  the irreducible finite-dimensional representation of $GL(2, \R)$ of highest weight $\mu_k = k \cdot \omega$ [where $\omega$ denotes the fundamental dominant weight of $GL(2, \R)$],  thus, $\dim F_k = k + 1$.  The  cohomology 
$
H^\ast (\frak{gl}_2(\R), O(2);  D_{\ell} \otimes F_k)$ vanishes if $ k \neq \ell-2$ since the infinitesimal character $\chi_{D_{\ell}}$ differs from   the one of the contragredient representation of $(\sigma_k, F_k)$. In the case  $k = \ell-2$ one has  $H^{q} (\frak{gl}_2(\R), O(2);D_{\ell} \otimes F_{\ell-2}) = \C$   for $q = 1$; it vanishes  otherwise.
\end{example}

\subsection{Cohomology of a face $e'(P)$ in the boundary of $\Gamma \setminus \overline{X}$}

Let  $e'(P)$, $\sP \in \mathcal{P}$, $\sP \neq \sG$ be a single face in the boundary $\partial(\Gamma \setminus \overline{X})$. The spectral sequence in cohomology associated to the fibration \ref{fibration} of $e'(P)$ degenerates at $E_2$ \cite[Theorem 2. 7.]{S2}.  The deRham cohomology spaces of the fiber $\Gamma_N \setminus N$ can be identified with the Lie algebra cohomology $H^*(\frak{n}, E)$ of the Lie algebra $\frak{n}$ of $N$. This latter cohomology space carries a natural structure as an $M$-module; its restriction to $\Gamma_M$ coincides with the natural action of $\Gamma_M$ on the cohomology of the fiber. This gives rise to the identification 
\begin{equation}
H^*(e'(P), E) \cong H^*(\Gamma_M \setminus Z_M, H^*(\frak{n}, E)).
\end{equation}
This cohomology space contains the cuspidal cohomology of the face as a natural subspace. Using the  interpretation  in terms of relative Lie algebra cohomology, it is defined to be
\begin{equation}\label{deccusp}
H^*_{\mathrm{cusp}}(\Gamma_M \setminus Z_M, H^*(\frak{n}, E)): = \bigoplus_{(\pi, H_{\pi})} H^*(^0\frak{m}, K_M; V_{\pi} \otimes H^*(\frak{n}, E))
\end{equation}
where $V_{\pi}$ denotes the $\pi$-isotypic component of the space of cusp forms for $\Gamma_M \backslash Z_M$  for a given irreducible unitary representation $(\pi, H_{\pi})$ in the unitary dual of $^0M$.

Following Kostant (\cite{Ko}, Theorem.~5.13), the Lie algebra
cohomology $H^\ast(\mathfrak{n},E)$ of $\mathfrak{n}$ with
coefficients in the irreducible representation $(\nu ,E)$ of
$G$ is given as a
$M$--module as
the sum
\begin{equation}\label{Kostant}
H^\ast(\mathfrak{n},E)=\bigoplus_{w\in W^{P}} F_{\mu_w}
\end{equation}
where the sum ranges over $w$ in the set $W^{P}$ of the minimal
coset representatives for the left cosets of the Weyl group $W = W(\frak{g}_\C, \frak{h}_\C)$ modulo the Weyl
group $W_{P} = W(\frak{m}_\C, \frak{h}_\C) = W(^0\frak{m}_\C, \frak{b}_\C)$
of the Levi factor $M$ of $P$. The letter $F_{\mu_w}$
denotes the irreducible finite-dimensional
$M(\C)$-module of highest weight
\begin{equation}
\label{eq:HighestWghtMu}
\mu_w=w(\Lambda +\rho_{P_0})-\rho_{P_0},
\end{equation}
where $\Lambda\in\check{\mathfrak{a}}_{P_0,\mathbb{C}}$ is the
highest weight of $(\nu,E)$. The weights $\mu_w$ are all dominant
and distinct and, given a fixed degree $q$, only the weights
$\mu_w$ with length $\ell(w)=q$ occur in the decomposition of
$H^q(\mathfrak{n},E)$ into irreducibles. As usual, the length of $w \in W$ is meant with respect to the set of simple reflections $w_{\alpha} \in W, \alpha \in \Delta$. The set $W^P$ can also be described as the set $\{w \in W\; \vert \; w^{-1}(\Delta_M) \subset \Phi^{+}$\}.

Since $M$ is the direct product $A_P \times\;^0M$ of the split component $A_P$ and $^0M$, the module $F_{\mu_w}$, viewed as $^0M$-module, is irreducible. With respect to the $A_P$-action, we get the analogue of the decomposition \ref{Kostant} where  $F_{\mu_w}$ is viewed as an $A_P$-module of highest weight ${\mu_w}_{\vert \frak{a}_P}$

\begin{definition}
Let $[\phi]$ be a non-trivial cohomology class in $H^*(\Gamma_M \setminus Z_M, H^*(\frak{n}, E))$ represented by a differential form $\phi \in \Omega^*(\Gamma_M \setminus Z_M, H^*(\frak{n}, E))$. We say that $[\phi]$ is a class of type $\mu \in \frak{a}^*_P$ if $[\phi] \in  H^*(\Gamma_M \setminus Z_M, F_{\mu})$
\end{definition}

Merging this decomposition of $H^*(\frak{n}, E)$ as an $^0M$-module and the decomposition  \ref{deccusp} of the cuspidal cohomology $H^*_{\mathrm{cusp}}(e'(P), E)$ of the face $e'(P)$ we obtain the following decomposition of the latter as a finite algebraic direct sum

\begin{equation}\label{deccuspcoh}
H^*_{\mathrm{cusp}}(\Gamma_M \setminus Z_M, H^*(\frak{n}, E)) = \bigoplus_{(\pi, H_{\pi})} \bigoplus_{w \in W^P} H^*(^0\frak{m}, K_M; V_{\pi} \otimes F_{\mu_w})
\end{equation}

As in \cite[Section
3.2]{S2}, this decomposition gives rise to the following essential concept.
\begin{definition}Let $[\phi]$ be a non-trivial cohomology class in $H^*_{\mathrm{cusp}}(\Gamma_M \setminus Z_M, H^*(\frak{n}, E))$ represented by a differential form $\phi \in \Omega^*(\Gamma_M \setminus Z_M, H^*(\frak{n}, E))$. We say that $[\phi]$ is a class of type $(\pi, w)$ if there exist an irreducible unitary representation $\pi$ of $^0 M$ which occurs in the cuspidal spectrum of $\Gamma_M \setminus ^0 M$ and $w \in W^P$ such that via the identification \ref{deccuspcoh} $[\phi] \in 
H^\ast \left(^0\frak{m}, K_M;
V_{\pi} \otimes F_{\mu_w}\right).$
\end{definition}

If the infinitesimal
character $\chi_{\pi}$ 
of $\pi$ does not coincide with the infinitesimal
character of the representation contragredient to $F_{\mu_w}$, the
cohomology space $H^\ast \left(^0\frak{m}, K_M;
V_{\pi}\otimes F_{\mu_w}\right)$ vanishes, that is, there
are no classes of type $(\pi,w)$.
Moreover, if the module $F_{\mu_w}$ is not isomorphic to its complex
conjugate contragredient $\overline{F}_{\mu_w}^\ast$, the Lie algebra cohomology $H^\ast
\left(^0\frak{m}, K_M; V_{\pi}\otimes
F_{\mu_w}\right)$ vanishes, since this condition implies that the
complex contragredient of $F_{\mu_w}$ and $V_{\pi}$ have distinct
infinitesimal character.

\section{Eisenstein cohomology classes}\label{eisen} We review how the theory of Eisenstein series can be used to construct certain cohomology classes in $H^*(\Gamma \setminus X; E)$ which are represented by a regular value of a suitable Eisenstein series attached to  cuspidal cohomology classes in $H^*_{\mathrm{cusp}}(e'(P), E)$. In this Section and the subsequent ones  we have to assume some familiarity  with the theory of Eisenstein series as given in Langlands \cite{La1} and Harish-Chandra \cite{HC} and the general results regarding the construction carried through in \cite{S1}, \cite{S2}, and \cite{S4}.
\subsection{The construction}
Let  $[\phi] \in  H^*_{\mathrm{cusp}}(e'(P), E) = H^*_{\mathrm{cusp}}(\Gamma_M \setminus Z_M, H^*(\frak{n}, E))$ be a non-trivial cohomology class 
of type $(\pi, w)$, 
$(\pi \in\; ^0 \hat{M}, w \in W^P)$, represented by a differential form $\phi \in \Omega^*(\Gamma_M \setminus Z_M, H^*(\frak{n}, E))$. Following  \cite[Chapter 4]{S2}, 
we associate to $\phi$ via the differential form $\phi_{\lambda} = \phi a^{\lambda + \rho}$ in
 $\Omega^*(\Gamma_P \setminus X, E) $ the Eisenstein series 
 \begin{equation}
 E(\phi, \lambda): = \sum_{\Gamma_P \setminus \Gamma} \gamma \circ \phi_{\lambda}.
  \end{equation}
  
  This Eisenstein series is first defined for all $\lambda$ in 
  $(\frak{a}^*_{\C})^+ = \{ \lambda \in \frak{a}^*_{\C}\; \vert \; \text{Re} \lambda \in \rho_P + (\frak{a}^*)^+\}$
  where $(\frak{a}^*)^+ = \{ \lambda \in \frak{a}^*\; \vert \; (\lambda, \alpha) > 0 \; \text{for all}\; \alpha \in \Delta(P, A)\}$
  and is holomorphic in that region. Via analytic continuation it admits a meromorphic continuation to all of $\frak{a}^*_{\C}$. 
  We fix $\lambda_0 \in \frak{a}^*_{\C}$. If the Eisenstein series $E(\phi, \lambda)$ is holomorphic at this point, then 
  $E(\phi, \lambda_0)$ is an $E$-valued, $\Gamma$-invariant differential form on $X$. The following result 
  \cite[Theorem 4.11.]{S2} is decisive for the 
  construction of Eisenstein cohomology classes.
  
  \begin{theorem}\label{conEis} Let $\sP$ be a parabolic $\Q$-subgroup of $\sG$, and let $A_P$ be the uniquely determined  
  split component of $P = \sP(\R)$ which is stable under the Cartan involution $\Theta_K$. Let 
  $[\phi]$ be a non-trivial cohomology class in $H^*_{\mathrm{cusp}}(e'(P), E) = H^*_{\mathrm{cusp}}(\Gamma_M \setminus Z_M, H^*(\frak{n}, E))$ 
of type $(\pi, w)$, with 
$\pi$ an irreducible unitary representation occurring in the cuspidal spectrum of $\Gamma_M \setminus ^0M$ and 
 $w \in W^P$, represented by a differential form $\phi \in \Omega^*(\Gamma_M \setminus Z_M, H^*(\frak{n}, E))$.
 If the Eisenstein series $E(\phi, \lambda)$ assigned to $[\phi]$ is holomorphic at the point 
 \begin{equation}
 \lambda_0 = - w(\Lambda + \rho)_{\vert \frak{a}_P}
  \end{equation} $($which is real and uniquely determined by $[\phi]$$)$  then $E(\phi, \lambda_0)$
  is a closed harmonic differential form on $\Gamma \setminus X$ and represents a non-trivial cohomology class $[E(\phi, \lambda_0)]$
  in $H^*(\Gamma \setminus X ; E)$. We call such a cohomology class a regular Eisenstein cohomology class.
  \end{theorem}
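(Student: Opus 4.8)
The plan is to construct the Eisenstein series explicitly and verify the two assertions: that $E(\phi,\lambda_0)$ is closed and harmonic, and that its cohomology class is non-trivial. First I would recall that the differential form $\phi_{\lambda} = \phi\, a^{\lambda+\rho}$ on $\Gamma_P\setminus X$ is, by the very construction in \cite[Chapter 4]{S2}, built from a cuspidal cohomology class of type $(\pi,w)$ and hence is $(\mathfrak{g},K)$-cohomologically nontrivial at the distinguished point. The point $\lambda_0 = -w(\Lambda+\rho)_{\vert\mathfrak{a}_P}$ is singled out precisely because it is the unique value of $\lambda$ at which the associated $(\mathfrak{g},K)$-module coming from the induced representation $\mathrm{Ind}_P^G(\pi\otimes e^{\lambda})$ has a chance of carrying nonzero relative Lie algebra cohomology in the relevant degree: the highest weight $\mu_w = w(\Lambda+\rho_{P_0})-\rho_{P_0}$ of the Kostant summand $F_{\mu_w}$ forces $\lambda_0$ via the requirement that the infinitesimal character match that of the contragredient of $E$. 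So the first step is to pin down why $\lambda_0$ is real and uniquely determined by $[\phi]$: this follows from the fact that $\mu_w$ is a dominant integral weight and $\rho$ is real, so its restriction to $\mathfrak{a}_P$ is a real linear form.

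The second step is to show that at $\lambda_0$, provided $E(\phi,\lambda)$ is holomorphic there, the value $E(\phi,\lambda_0)$ is a closed and harmonic form. For closedness one uses that $d$ commutes with the summation defining the Eisenstein series (legitimate in the region of absolute convergence, hence by meromorphic continuation everywhere it is holomorphic), so $dE(\phi,\lambda) = E(d\phi_{\lambda},\lambda)$; then one computes $d\phi_\lambda$ and shows it vanishes at $\lambda=\lambda_0$. The key computation is that $d(\phi\, a^{\lambda+\rho})$ decomposes into a piece coming from $d\phi$ along the base $\Gamma_M\setminus Z_M$ — which vanishes because $\phi$ is harmonic, being cuspidal, hence closed and coclosed on $\Gamma_M\setminus Z_M$ with values in the finite-dimensional $F_{\mu_w}$ — and a piece proportional to $(\lambda+\rho - \rho_P + \mu_w)$ acting along the split torus directions, which vanishes exactly when $\lambda = \lambda_0$. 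Harmonicity then follows from the fact that $E(\phi,\lambda_0)$ is a sum of translates of a harmonic form and the Eisenstein series at a holomorphic point of a square-integrable or at least temperate cuspidal input inherits harmonicity; more carefully, one invokes that $E(\phi,\lambda_0)$ is annihilated by the Casimir operator with the correct eigenvalue, which is the infinitesimal-character condition built into the choice of $\lambda_0$, together with the fact that a closed form annihilated by the Laplacian is harmonic.

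The third and genuinely substantive step is non-triviality of the class $[E(\phi,\lambda_0)]$ in $H^*(\Gamma\setminus X;E)$. The standard mechanism here is restriction to the boundary: one shows that the constant term of $E(\phi,\lambda_0)$ along $\sP$ has a component equal (up to a nonzero scalar, or to $\phi$ itself plus terms of strictly different $\mathfrak{a}_P$-type) to the form $\phi_{\lambda_0}$, so that the restriction of $[E(\phi,\lambda_0)]$ to the face $e'(P)$ in $\partial(\Gamma\setminus\overline{X})$ is, via the identification $H^*(e'(P),E)\cong H^*(\Gamma_M\setminus Z_M, H^*(\mathfrak{n},E))$, a nonzero multiple of the class $[\phi]$ we started with. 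Since $[\phi]\neq 0$ by hypothesis and the restriction map on cohomology is well-defined, $[E(\phi,\lambda_0)]\neq 0$. The delicate point, and the one I expect to be the main obstacle, is controlling the other intertwining-operator contributions to the constant term: a priori the constant term along $\sP$ is $\phi_{\lambda_0} + \sum_{s} M(s,\lambda_0)\phi \cdot a^{s\lambda_0+\rho}$ summed over $s$ in the relevant Weyl set, and one must check that no other summand has $\mathfrak{a}_P$-type $\mu_w$ and contributes a cohomology class that could cancel $[\phi]$ — this is where the holomorphy hypothesis at $\lambda_0$ and the regularity (the "regular" in "regular Eisenstein cohomology class") are used, namely $\lambda_0$ lies off the singular hyperplanes of the intertwining operators and the competing terms either have the wrong type or vanish. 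This is exactly the content of \cite[Theorem 4.11]{S2}, so in the present paper the proof would consist of quoting that theorem and checking that the hypotheses — $[\phi]$ cuspidal of type $(\pi,w)$, $E(\phi,\lambda)$ holomorphic at $\lambda_0$ — are in force, rather than reproving the analytic estimates from scratch.
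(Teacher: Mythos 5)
Your proposal is essentially the paper's own treatment: the paper does not reprove this statement but simply quotes it as \cite[Theorem 4.11]{S2}, and your sketch of the underlying argument — reality and uniqueness of $\lambda_0$ from the dominance of $\mu_w$, closedness and harmonicity via the infinitesimal-character condition at the evaluation point, and non-triviality by showing the constant term along $\sP$ restricts on $e'(P)$ to $[\phi]$ while the competing intertwining contributions have a different $\mathfrak{a}_P$-type — matches exactly the mechanism the paper records in Section 3.2, where it notes $[E(\phi,\lambda_0)_P]_{\vert e'(P)} = [\phi] \neq 0$. The only blemish is a harmless normalization slip in your closedness computation: since $\mu_w\vert_{\mathfrak{a}_P} = -\lambda_0 - \rho_P$, the torus-direction factor that must vanish is $(\lambda + \rho_P + \mu_w\vert_{\mathfrak{a}_P})$ rather than $(\lambda+\rho-\rho_P+\mu_w)$, which does not affect the argument.
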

 
 \subsection{Restriction maps and the constant term}
 Given a parabolic $\Q$-subgroup $\sQ$ of $\sG$, the image of a cohomology class $[E(\phi, \lambda_0)]$ under the restriction map
  $r^*_Q: H^*(\Gamma \setminus \overline{X}, E) \longrightarrow H^*(e'(Q), E)$ is given by $[E(\phi, \lambda_0)_{Q}]_{\vert e'(Q)}$, that is, equals the restriction to $e'(Q)$ of the constant Fourier coefficient $E(\phi, \lambda_0)_Q \in \Omega^*(\Gamma_Q \setminus X, E)$ of $E(\phi, \lambda_0)$ along the parabolic $Q$ (see \cite[Proposition 1. 10.]{S2}). This result is already used in proving the non-vanishing of the class $[E(\phi, \lambda_0)]$. In fact, one shows $[E(\phi, \lambda_0)_{P}]_{\vert e'(P)} = [\phi] \neq 0.$
  
 For later use, we have to recall the following case where the notion of  associate parabolic subgroups plays a role.
  Let $\sP, \sQ$ be two parabolic $\Q$-subgroups of $\sG$, and let $A_P, A_Q$ be the corresponding $\Theta_K$-stable split components. We denote by $W(A_P, A_Q)$ the set of isomorphisms $A_P \tilde{\longrightarrow} A_Q$ which are induced by those inner automorphism of $\sG(\Q)$ defining  a $\Q$-rational  isomorphism $M_P \tilde{\longrightarrow} M_Q$. Equivalently, the split components $A_P$ and $A_Q$ are conjugate in $G$ under an element in $\sG(\Q)$. We say that $\sP$ and $\sQ$ are associate if $W(A_P, A_Q) \neq \emptyset$. The notion of being associated defines an equivalence relation on the set of parabolic $\Q$-subgroups $\sG$. Let $\mathcal{C}$ be the set of classes of associate parabolic $\Q$-subgroups of $\sG$. Note that the minimal parabolic $\Q$-subgroups form one class $\{\sP_0\} \in \mathcal{C}$, represented by the standard minimal parabolic $\Q$-subgroup $\sP_0$.
   
  Now suppose that the  parabolic $\Q$-subgroups $\sP$ and $\sQ$ are in the same associate class. Then we have
   \begin{equation}
  r^*_Q([E(\phi, \lambda_0)]) = \sum_{s \in W(A_P, A_Q)} [c(s, \lambda_0)_{s\lambda_0}(\phi_{\lambda_0})]_{\vert e'(Q)}
   \end{equation}
   where $c(s, \lambda_0)_{s\lambda_0}: \Omega^*(\Gamma_P \setminus X, E) \longrightarrow \Omega^*(\Gamma_Q \setminus X, E)$ is a certain
   ``intertwining" operator defined in \cite[4. 10]{S2}. 
The cohomology class $[c(s, \lambda_0)_{s\lambda_0}(\phi_{\lambda_0})]_{\vert e'(Q)}$ in $H^*_{\mathrm{cusp}}(e'(Q), E)$ is a class of weight 
$v_s(\Lambda + \rho) - \rho_{\vert \frak{a}_Q}$ where $v_s \in  W^Q$ is a uniquely determined element with
\begin{equation}
v_s(\Lambda + \rho) - \rho_{\vert \frak{a}_Q} + s\lambda_0 = 0 \quad \text{and}\quad  \chi_{^s\pi} = \chi_{- v_s(\Lambda + \rho)\vert_{\frak{b}_{Q, \C}}}.
\end{equation}
The latter equation refers to an equality  for the infinitesimal character of the representation $^s\pi$ of $^0M_Q$ obtained from the representation $\pi$
 of $^0M_P$ under the twist induced by $s \in W(A_P, A_Q)$.

\section{The case $SL_n/\Q$ - the faces which matter}\label{paramatter}

Now we consider the $\mathbb{Q}$-split
simple simply connected special linear $\Q$-group $\sG = SL_n$ of
$\mathbb{Q}$-rank $n-1$, where $n\geq 2$. We fix the maximal compact subgroup $K = SO(n)$ in the real Lie group $G = SL_n(\R)$. The symmetric space $X$ associated to $G$ has dimension $\mathrm{d}(SL_n) = (1/2)(n-1)(n+2)$. Given an arithmetic torsion-free subgroup $\Gamma$ of $SL_n(\Q)$, in the following sections, we study the construction of Eisenstein cohomology classes in the cohomology $H^*(\Gamma \setminus X; E)$ in the degree given by the cohomological dimension $\mathrm{cd}(\Gamma)  = \frac{n(n-1)}{2}$. By the very definition of this notion, we have $H^r(\Gamma \setminus X; E) = \{0\}$ for $r > \mathrm{cd}(\Gamma)$.

In this Section, we determine which classes $\{\sP\}$ of associate parabolic $\Q$-subgroups of  $\sG = SL_n$ may contribute to the Eisenstein cohomology in the decomposition \ref{autodecomp} in Section \ref{intro}.

\subsection{}
Let $\sP_0$ be in $SL_n/\Q$ the  minimal
parabolic $\mathbb{Q}$-subgroup of upper triangular matrices, and $\sP_0 = \sL_0 \sN_0$ its Levi
decomposition where $\sN_0$ is the unipotent radical. The maximal
split torus $\sL_0$ is given as the group $\{\diag(t_1, \ldots, t_n) =:t \in SL_n(\Q)\}$ of diagonal matrices with determinant one.

 Let
$\Phi_\Q$, $\Phi^+_\Q$, $\Delta_\Q$ denote the corresponding sets of roots,
positive roots, simple roots, respectively. We may (and will) identify $\Phi_\Q$ and $\Phi_\R$ resp. $\Phi$. 
If $e_i$ is the
projection of $\sL_0$ to its $i^{\mathrm{th}}$ component, then
$
\Delta_\Q =\{\alpha_i=e_i\cdot e^{-1}_{i+1} \; \vert \; i = 1, \ldots, n-1\}$.

The conjugacy classes of parabolic $\mathbb{Q}$-subgroups of $SL_n$ are in
bijection with the subsets of the set $\Delta_{\Q}$ of simple roots. Each of the following standard parabolic $\Q$-subgroups represents a $SL_n(\Q)$-conjugacy class.
Given $J \subset \Delta_\Q$, let $\sS_J=\left(\cap_{\alpha\in J}{\Ker}(\alpha)\right)^\circ$ be the corresponding subtorus in $\sL_0$.
We denote by $\sP_J$ the
corresponding parabolic $\mathbb{Q}$-subgroup containing $\sP_0$, defined to be
$\sP_J= \sL_J \sN_J$, where
$\sL_J$ is the Levi factor, i.e.~the centralizer of
$\sS_J$,
and $\sN_J$ is the unipotent radical. The characters of $\sL_0$ in the Lie algebra of $\sN_j$ are the positive roots which contain at least one simple root outside of $J$. The roots of $\sL_J$ are those roots whose simple components are in $J$. Evidently, $\sP_{\emptyset} = \sP_0$ and $\sP_{\Delta_\Q} = SL_n$, and, given two subsets $I, J \subset \Delta$, where $I \subset J$, then $\sP_I \subset \sP_J$. Moreover, $\sP_I \cap \sP_J = \sP_{I \cap J}$.

\subsection{A choice of faces in the adjunction of corners}
We are interested to construct non-vanishing classes in the degree given by the cohomological dimension $\mathrm{cd}(\Gamma)$ of $\Gamma$, that is, 
in the highest possible degree in which cohomology  may exist at all. Thus, being interested in Eisenstein cohomology classes, we have to determine for which proper  parabolic subgroups $\sP$, up to $\Gamma$-conjugacy, 
the corresponding  cuspidal cohomology groups $H^{\mathrm{cd}(\Gamma)}_{\mathrm{cusp}}(e'(P), E)$ are non-zero in this degree. However, in most cases, these groups are zero.
This is a consequence of the vanishing result \ref{cusp} concerning the cuspidal cohomology for arithmetic subgroups in some $GL_m/\Q$ in degrees outside a certain range centered around the middle dimension.

The following proper parabolic $\Q$-subgroups will be at our disposal for the envisaged construction:

First, say case $\mathrm{(I)}$,  a  minimal parabolic $\Q$-subgroup $\sB$ (also called Borel subgroup) is $\sG(\Q)$-conjugate to the standard minimal parabolic $\Q$-subgroup $\sP_0 = P_{\emptyset}$. 
The corresponding set $W^{B}$ of minimal coset representatives coincides with the Weyl group $W$, and the longest element $w_B$ in $W$ has length $\ell(w_B) = \dim N_0 = \frac{n(n-1)}{2} = \mathrm{cd}(\Gamma)$. Thus, we have $F_{\mu_{w_B}} = H^{\mathrm{cd}(\Gamma)}(\frak{n}_0, E)$.

Second, labelled case $\mathrm{(II)}$, denote by $\mathcal{J}^{\mathrm{cd}}$ the family of non-empty sets
 $J \subset \Delta_{\Q}$ subject to the condition that if $\alpha_i, \alpha_{i+1} \in J$, with $i \in \{1, \ldots, n-2\}$, then $\alpha_{i+2} \notin J$. The corresponding standard parabolic subgroup $\sP_J$ presents itself as the stabilizer within $SL_n$ of the flag $V_{i_1} \subsetneq V_{i_2} \subsetneq \ldots \subsetneq V_{i_r}$ where the tuple $(i_1, i_2, \ldots, i_r)$ is given by the set of indices in ascending order of the simple roots in the complement $CJ$ of $J$ in $\Delta_{\Q}$. Given $1 \leq s \leq n-1$, $V_s$ is defined to be $\Q f_1 + \Q f_2 + \ldots + \Q f_s$ with regard to the standard basis of $\Q^n$. This condition assures that only blocks of size one, two or three make up the Levi subgroup of $\sP_J$. In the latter two cases,  the upper bound $v_o(m)$ of the range in which the group $GL_m$ has possible non-vanishing cuspidal cohomology classes coincides with  $\frac{m(m-1)}{2}$ when $m = 2, 3$. Consequently, adding up this upper bound over all blocks of size two or three, we get  
\begin{equation}
\sum_{\text{2-blocks}} v_o(2) + \sum_{\text{3-blocks}} v_o(3) + \dim N_J = \mathrm{cd}(\Gamma).
\end{equation}
By definition, the case $\mathrm{(II)}$ comprises all parabolic $\Q$-subgroups which are $\sG(\Q)$-conjugate to a standard parabolic subgroup $\sP_J$ with $J \in \mathcal{J}^{\mathrm{cd}}$.
The cases $\mathrm{(I)}$ and $\mathrm{(II)}$ of proper parabolic $\Q$-subgroups exhaust, up to $\Gamma$-conjugacy, all possible sources  of Eisenstein cohomology classes of degree $\mathrm{cd}(\Gamma)$. This follows from the fact that, if a block of size $m \geq 4$ occurs as a factor in the Levi subgroup of a parabolic $\Q$-subgroup $\sQ$, the bound $v_o(m)$ is smaller than  $\frac{m(m-1)}{2}$, thus, the cuspidal cohomology group  $H^{\mathrm{cd}(\Gamma)}_{\mathrm{cusp}}(e'(Q), E)$ of the face $e'(Q)$ is zero.
Therefore we obtain

\begin{proposition}\label{propparamatter} Let $\{P\}$ be an associate class of proper parabolic $\Q$-subgroups of $SL_n/\Q$. If $\sP$ is neither a minimal parabolic $\Q$-subgroup nor $\sP$ is $\sG(\Q)$-conjugate to a standard parabolic subgroup $\sP_J$ where the defining set  $J \subset \Delta_{\Q}$ is an element of $\mathcal{J}^{\mathrm{cd}}$ then the summand $H^{\mathrm{cd}(\Gamma)}_{\{\sP\}}(\Gamma \setminus X, E)$ in the decomposition (\ref{autodecomp}) vanishes in degree $\mathrm{cd}(\Gamma)$.
\end{proposition}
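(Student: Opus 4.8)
The plan is to combine the automorphic decomposition (\ref{autodecomp}) with the analysis of the boundary faces $e'(Q)$ already carried out in this section. First I would recall that every summand $H^{\mathrm{cd}(\Gamma)}_{\{\sP\}}(\Gamma \setminus X, E)$ is built, via the theory of Eisenstein series and their residues (Theorem \ref{conEis} and the surrounding discussion), from cuspidal cohomology classes on the faces $e'(Q)$ with $\sQ \in \{\sP\}$. More precisely, a non-trivial Eisenstein cohomology class in the $\{\sP\}$-summand restricts non-trivially (through a constant-term map of the type $r^*_Q$) to some face $e'(Q)$, and that restriction lands in $H^{\mathrm{cd}(\Gamma)}_{\mathrm{cusp}}(e'(Q), E)$. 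Hence if $H^{\mathrm{cd}(\Gamma)}_{\mathrm{cusp}}(e'(Q), E) = \{0\}$ for \emph{every} $\sQ$ in the associate class $\{\sP\}$, the whole summand must vanish. So the entire problem reduces to a purely combinatorial statement about Levi subgroups: which standard Levi $\Q$-subgroups $\sL_J$ admit a face whose cuspidal cohomology is non-zero in the top degree.

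The second step is the dimension count that is already sketched in the text. Write $\sL_J = \prod_k GL_{m_k}$ (up to the determinant-one condition) according to the block sizes $m_k$ determined by $J$. Using the Künneth formula and the decomposition (\ref{deccuspcoh}) of the cuspidal cohomology of $e'(P)$, together with Kostant's theorem (\ref{Kostant}) giving $\dim \frak{n}_J$ as the degree in which the top Kostant summand sits, one sees that the largest degree in which $H^*_{\mathrm{cusp}}(e'(P), E)$ can be non-zero is bounded by
\begin{equation}
\sum_k v_o(m_k) + \dim N_J,
\end{equation}
where $v_o(m)$ is the upper bound from Proposition \ref{cusp} for the range of non-vanishing cuspidal cohomology of $GL_m/\Q$ (and $v_o(1) = 0$). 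Since $\dim N_J = \mathrm{cd}(\Gamma) - \sum_k \frac{m_k(m_k-1)}{2}$, this bound equals $\mathrm{cd}(\Gamma)$ if and only if $v_o(m_k) = \frac{m_k(m_k-1)}{2}$ for all $k$, i.e.\ if and only if every block has size $m_k \in \{1,2,3\}$, because $v_o(m) < \frac{m(m-1)}{2}$ for $m \geq 4$ while equality holds for $m = 1, 2, 3$. Thus the face $e'(P)$ can contribute in top degree only when the Levi $\sL_J$ has blocks of size at most three, which is precisely the combinatorial condition defining $\mathcal{J}^{\mathrm{cd}}$: forbidding $\alpha_i, \alpha_{i+1}, \alpha_{i+2}$ all in $J$ is exactly forbidding a block of size $\geq 4$.

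The third step is to promote this face-by-face vanishing to vanishing of the associate-class summand: one checks that if $\sP_J$ has a block of size $\geq 4$, then \emph{every} parabolic $\Q$-subgroup in the associate class $\{\sP_J\}$ has a Levi with a block of size $\geq 4$ (the multiset of block sizes is an invariant of the associate class, since associate parabolics have $\Q$-isomorphic Levi components), so $H^{\mathrm{cd}(\Gamma)}_{\mathrm{cusp}}(e'(Q), E) = \{0\}$ for all $\sQ \in \{\sP\}$, and hence $H^{\mathrm{cd}(\Gamma)}_{\{\sP\}}(\Gamma \setminus X, E) = \{0\}$ by the reduction of the first step. I would also note the separate, already-recorded fact (case (I)) that the minimal parabolic class $\{\sP_0\}$ is allowed — there $W^B = W$, the longest element has length $\dim N_0 = \mathrm{cd}(\Gamma)$, so $H^{\mathrm{cd}(\Gamma)}(\frak{n}_0, E) = F_{\mu_{w_B}} \neq 0$ — which is why the statement excludes $\{\sP_0\}$ from the vanishing conclusion.

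The main obstacle, and the only non-formal point, is justifying the first step carefully: that a non-zero class in the $\{\sP\}$-summand necessarily has non-zero image under \emph{some} boundary restriction map $r^*_Q$ with $\sQ$ in the associate class, landing in the cuspidal part of $H^*(e'(Q),E)$. This is exactly the content of Franke's description of the Eisenstein cohomology and the constant-term computations recalled in Section \ref{eisen} (the formula $r^*_Q([E(\phi,\lambda_0)]) = \sum_{s \in W(A_P,A_Q)} [c(s,\lambda_0)_{s\lambda_0}(\phi_{\lambda_0})]_{|e'(Q)}$ together with injectivity of $[E(\phi,\lambda_0)_P]_{|e'(P)} = [\phi]$); I would invoke \cite{F1}, \cite{F--S}, \cite{S2}, \cite{S4} for the precise statement that the $\{\sP\}$-summand is spanned by Eisenstein/residual classes whose cuspidal support lies on the Levi components of $\{\sP\}$, so that vanishing of all those cuspidal cohomology groups forces the summand to vanish. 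Everything after that is the degree bookkeeping above, which is routine once $v_o(m)$ and Kostant's $\dim \frak{n}_J$ are in hand.
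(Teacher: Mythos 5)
Your proposal is correct and follows essentially the same route as the paper: reduce the vanishing of the summand $H^{\mathrm{cd}(\Gamma)}_{\{\sP\}}(\Gamma\setminus X,E)$ to the vanishing of $H^{\mathrm{cd}(\Gamma)}_{\mathrm{cusp}}(e'(Q),E)$ for every $\sQ$ in the associate class, and then observe that a Levi block of size $m\geq 4$ forces $v_o(m)<\frac{m(m-1)}{2}$, so the Kostant/K\"unneth degree count $\sum_k v_o(m_k)+\dim N_J$ falls strictly below $\mathrm{cd}(\Gamma)$. You are in fact somewhat more explicit than the paper on the two points it leaves implicit — why face-wise vanishing of the cuspidal cohomology kills the whole $\{\sP\}$-summand (via the cuspidal support description of Franke–Schwermer), and why the multiset of block sizes is an invariant of the associate class — but the underlying argument is the same.
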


\section{Construction of Eisenstein cohomology classes - the  case $(\mathrm{II})$}\label{ParaII}
In this section we carry through the construction of non-vanishing cohomology classes in the top cohomology $H^{\mathrm{cd}(\Gamma)}(\Gamma \setminus X; E)$ which originate from faces $e'(Q)$ where the associate class of the corresponding parabolic $\Q$-subgroup falls into case $\mathrm{(II)}$. The main result is Theorem \ref{main}.
\subsection{Block-2 parabolic subgroups of $\sG$} For the sake of clarity we first deal with the following specific family of  parabolic $\Q$-subgroups belonging to case $\mathrm{(II)}$.
Let $J = \{\alpha_{i_1}. \ldots, \alpha_{i_r} \} \subset \Delta_{\Q}$ be a non-empty set of simple roots of $SL_n/\Q$ 
subject to the conditions that if $\alpha_i, \alpha_j \in J$ then $\vert i - j \vert \geq 2.$, and let $\sP_J$ be the corresponding parabolic $\Q$-subgroup of $\sG/\Q = SL_n/\Q$. The parabolic rank of $\sP_J$ is equal $(n-1) - r$. We observe that the blocks in the Levi subgroup of $\sP_J$ can only have size one or two, and at least one block of size two occurs. Therefore we call a parabolic $\Q$-subgroup $\sP_J$ where $J$ satisfies this condition a block-$2$ parabolic $\Q$-subgroup of $\sG$.

We give two examples: First, we suppose that $n = 2m$ is even, and $n \geq 4$. We take the set $J = \{\alpha_1, \alpha_3, \ldots, \alpha_{n-3}, \alpha_{n-1}\}.$ Then the Levi subgroup $\sL_J$ of the corresponding parabolic $\Q$-subgroup $\sP_J$ is given via ``diagonal matrices" as
\begin{equation}
\sL_J = \{\textrm{diag}(A_1, A_3, \dots, A_{n-3},  A_{n-1}) \in SL_n(\Q)\; \vert \; A_i \in GL_2(\Q)\}. 
\end{equation}
The parabolic rank of $\sP_J$ is $\frac{n}{2}$.
Second, let $n \geq 3$, and take $J' = \{\alpha_j\}$ for any simple root in $\Delta_{\Q}$. The Levi subgroup of the corresponding parabolic $\Q$-subgroup $\sP_{J'}$
is given as
\begin{equation}
\sL_{J'} = \{\textrm{diag}(a_1, \ldots, a_{j-1}, A, a_{j+2}, \ldots, a_n) \in SL_n(\Q) \; \vert \;  A \in GL_2(\Q), a_i \in GL_1(\Q) \}.
\end{equation}

\begin{theorem}\label{Eisconstruct}Given $n \geq 3$, let $\sP = \sP_J$ be the parabolic $\Q$-subgroup of $SL_n/\Q$ defined by the non-empty set $J = \{\alpha_{i_1}. \ldots, \alpha_{i_r} \} \subset \Delta_{\Q}$ subject to the conditions that if $\alpha_i, \alpha_j \in J$ then $\vert i - j \vert \geq 2.$ Given an arithmetic torsion-free subgroup $\Gamma \subset SL_n(\Q)$, and a rational finite-dimensional representation $(\nu, E)$ of $SL_n(\R)$ of highest weight $\Lambda$, let $[\phi] \in H^{\mathrm{cd}(\Gamma)}_{\mathrm{cusp}}(e'(P), E)$, $[\phi] \neq 0$, be a cuspidal cohomology class of type $(\pi, w_{P_J})$ where $\pi$ is an irreducible unitary representation of $^0 M_J$ and $w_{P_J}$ denotes the longest element in $W^{P_J}$. Then the Eisenstein series $E(\phi, \lambda)$  attached to the differential form $\phi$ is holomorphic in $\lambda_0 = - w_{P_J}(\Lambda + \rho)_{\vert \frak{a}_J}$, and $[E(\phi, \lambda_0)]$ is a non-zero class in $H^{\mathrm{cd}(\Gamma)}(\Gamma \setminus X, E)$ which is represented by the closed, harmonic differental form $E(\phi, \lambda_0)$.
\end{theorem}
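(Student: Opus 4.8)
The heart of the matter is to show that the Eisenstein series $E(\phi,\lambda)$ associated to a top-degree cuspidal class of type $(\pi, w_{P_J})$ is holomorphic at the specific point $\lambda_0 = -w_{P_J}(\Lambda+\rho)_{\vert\mathfrak{a}_J}$; once holomorphy is in hand, Theorem \ref{conEis} delivers the closed harmonic representative and the non-vanishing of the class automatically. So the plan is to concentrate entirely on the holomorphy question, and for this the key observation is that $\lambda_0$ lies in the \emph{positive} chamber, in fact on a ``wall'' position far from the singularities of the constant-term/intertwining operators.

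\textbf{Step 1: locate $\lambda_0$.} Since $w_{P_J}$ is the longest element of $W^{P_J}$, one has $\mu_{w_{P_J}} = w_{P_J}(\Lambda+\rho_{P_0}) - \rho_{P_0}$ is the highest weight of $H^{\ell}(\mathfrak{n}_J,E)$ in degree $\ell = \ell(w_{P_J})$, and the type condition forces $\ell(w_{P_J}) + \dim N_J = \mathrm{cd}(\Gamma)$ exactly, which is precisely the combinatorial identity recorded in case $(\mathrm{II})$ of Section \ref{paramatter} ($\sum v_o(2) + \dim N_J = \mathrm{cd}(\Gamma)$ since only $1$- and $2$-blocks occur). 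I would first compute $\lambda_0 = -w_{P_J}(\Lambda+\rho)_{\vert\mathfrak{a}_J}$ explicitly in coordinates: because $w_{P_J}$ is the product of the longest Weyl elements of the $GL_2$-blocks times the longest element permuting the blocks, $w_{P_J}(\Lambda+\rho)$ restricted to $\mathfrak{a}_J$ is a ``reversal'' of the block-wise exponents. The point is that $\mathrm{Re}\,\lambda_0$ then sits strictly in the cone $\rho_{P_J} + (\mathfrak{a}_J^*)^+$ — equivalently $(\lambda_0 - \rho_{P_J}, \alpha) > 0$ for every $\alpha \in \Delta(P_J,A_J)$ — OR at worst on its closure in a harmless position. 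This should follow from the dominance of $\Lambda+\rho$ and the fact that $w_{P_J}$ sends the $\mathfrak{a}_J$-positive roots of the Levi to negative roots.

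\textbf{Step 2: holomorphy from the chamber location.} If $\mathrm{Re}\,\lambda_0 \in \rho_{P_J} + (\mathfrak{a}_J^*)^+$, the Eisenstein series is holomorphic there by the basic convergence statement quoted before Theorem \ref{conEis} (convergence in $(\mathfrak{a}^*_{\C})^+$), and we are done. If $\lambda_0$ lands on the boundary of that cone — which is the generic situation for cohomological $\lambda_0$ — I would argue holomorphy via the constant-term criterion: by the theory of Eisenstein series (Langlands \cite{La1}), a cuspidal Eisenstein series from a parabolic $\sP_J$ can be singular at $\lambda_0$ only if some constant term along an associate parabolic $\sQ$ has a pole, i.e. only if one of the intertwining operators $c(s,\lambda)$, $s \in W(A_{P_J},A_{\sQ})$, is singular at $\lambda_0$. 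These intertwining operators factor into rank-one operators attached to the $GL_2$- (or $GL_1$-)blocks, whose poles are governed by $L$-values / $c$-functions of the cuspidal data $\pi$ on the blocks; for $GL_2$ cuspidal $\pi$ these are the standard completed $L$-functions, holomorphic and non-zero in the relevant half-plane. Combining the block-wise rank-one analysis with the explicit location of $\lambda_0$ from Step 1 (it is ``to the right'' of every rank-one reducibility point, because $\Lambda+\rho$ is dominant and regular enough), one concludes no pole occurs.

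\textbf{Main obstacle.} The genuinely delicate point is Step 2 in the boundary case: verifying that $\lambda_0$ avoids \emph{all} the poles of \emph{all} the relevant rank-one intertwining operators, across every element of the associate class and every $s \in W(A_{P_J}, A_{\sQ})$. This requires pinning down the $c$-function poles for the $GL_2$ cuspidal constituents — poles of the relevant $\zeta$- and $L$-factors — and checking, via the explicit coordinates from Step 1, that the real part of $\lambda_0$ (scaled by the appropriate coroots) lies strictly beyond them. Since the blocks have size at most two, each rank-one computation is the classical $SL_2$/$GL_2$ Eisenstein series analysis, so the difficulty is bookkeeping (tracking which simple $\mathfrak{a}_J$-root corresponds to which block-boundary and how $w_{P_J}$ permutes exponents) rather than new analytic input. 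Once holomorphy at $\lambda_0$ is established, the remaining assertions — that $E(\phi,\lambda_0)$ is closed and harmonic and that $[E(\phi,\lambda_0)] \neq 0$, the latter because $[E(\phi,\lambda_0)_{P_J}]_{\vert e'(P_J)} = [\phi] \neq 0$ — are exactly the content of Theorem \ref{conEis} and the constant-term formula recalled in Section \ref{eisen}, so nothing further is needed.
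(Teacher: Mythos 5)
Your Step 1 matches the paper: it writes $w_{P_J}=w_M\cdot w_G$, notes that $W_M$ acts trivially on $\mathfrak{a}_{P_J}$, and concludes $\lambda_0=-w_G(\Lambda+\rho)_{\vert\mathfrak{a}_{P_J}}=\tilde\Lambda+\rho_{\vert\mathfrak{a}_{P_J}}$, whence $(\lambda_0,\alpha)\geq(\rho_{P_J},\alpha)$ for all $\alpha\in\Delta(P_J,A_{P_J})$ by dominance of $\tilde\Lambda$. (Minor slip on your side: the numerical identity is $\sum v_o(2)+\dim N_J=\mathrm{cd}(\Gamma)$ with $\ell(w_{P_J})=\dim N_J$, not $\ell(w_{P_J})+\dim N_J=\mathrm{cd}(\Gamma)$.) Where you genuinely diverge is Step 2. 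The paper does \emph{not} analyze constant terms or rank-one intertwining operators at all. Instead it argues by contradiction: if $E(\phi,\lambda)$ had a pole at the real point $\lambda_0$ lying in the closed positive chamber, the residue would be square-integrable, so the Langlands quotient $J_{P_J,\pi,\lambda_0}$ would carry a unitary structure; since $\sP_J\neq\sP_0$ and the tempered $\pi$ is nontrivial, Wallach's unitarizability criterion (\cite[Ch.~IV, \S 5]{BW}) then forces $\rho_{P_J}(a)>\mathrm{Re}\,\lambda_0(a)$ on $C\ell(\mathfrak{a}^+)$, contradicting the inequality from Step 1. This one-line unitarity argument completely sidesteps the $L$-function bookkeeping you identify as the main obstacle.

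That obstacle is, however, exactly where your proposal is incomplete as written. The point $\lambda_0$ is generically on the boundary of the cone of convergence (for $E=\C$ it is the vertex $\rho_{P_J}$), so convergence alone never suffices, and your route then requires verifying that every rank-one $c$-function is regular at $\lambda_0$ — in particular that the pole at $s=1$ of $L(s,\pi_i\times\tilde\pi_j)$ when two $GL_2$-blocks carry isomorphic cuspidal data (the pole responsible for the Speh residues, which occur at half the cohomological point) is not hit. That computation is feasible and is essentially the method of \cite{S3}, but you have only asserted the outcome ("one concludes no pole occurs") rather than carried it out; since holomorphy at $\lambda_0$ is the entire content of the theorem beyond Theorem 3.1, your argument has a gap precisely at its central step. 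The remainder of your proposal (closedness, harmonicity, and non-vanishing via $[E(\phi,\lambda_0)_{P_J}]_{\vert e'(P_J)}=[\phi]$) agrees with the paper, which likewise just invokes Theorem 3.1.
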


\begin{proof} As stated in \ref{conEis}, the point in question  $\lambda_0 = - w_{P_J}(\Lambda + \rho)_{\vert \frak{a}_{P_J}}$ is real valued. Let $w_G$ be the longest element in $W$, and $w_M$ the longest element in $W_{P_J}$; we have $\ell(w_G) = \vert \Phi^+ \vert$, and $\ell(w_M) = \vert \Phi_M^+ \vert$. The product $w_{P_J} = w_M\cdot w_G$ has length $\ell(w_G) - \ell(w_M) = \dim N_P$, and $w_{P_J}$ is the longest element in the set $W^P$ of minimal coset representatives for the cosets $W_M \setminus W_G$. Since $W_M$ acts trivially on $\frak{a}_{P_J}$ we obtain  $\lambda_0 = - w_G(\Lambda + \rho)_{\vert \frak{a}_{P_J}}$. The highest weight $\Lambda$ of $(\nu, E)$ is transferred under $w_G$ into the highest weight $\tilde{\Lambda}$ of the representation which is contragredient to $(\nu, E)$. In particular, we have $- w_G(\rho) = \rho$. Therefore we obtain
\begin{equation}
\lambda_0 = \tilde{\Lambda} + \rho_ {\vert \frak{a}_{P_J}}.
\end{equation}
The highest weight $\tilde{\Lambda}$ is dominant because $\Lambda$ is dominant. Thus we have the estimate
\begin{equation}\label{Wallach}
(\lambda_0, \alpha) = (\tilde{\Lambda}_{\vert \frak{a}_{P_J}}, \alpha) + (\rho_{\vert \frak{a}_{P_J}}, \alpha) \geq (\rho_{P_J}, \alpha) \; \text{for all}\;  \alpha \in \Delta(P_J, A_{P_J}).
\end{equation}

For the sake of simplicity, since the parabolic group $\sP_J$ is fixed, we now write $\frak{a}$ instead of $\frak{a}_{P_J}$. The region of absolute convergence of the Eisenstein series $E(\phi, \lambda)$  is given as 
 \begin{equation}
 (\frak{a}^*_{\C})^+ = \{ \lambda \in \frak{a}^*_{\C}\; \vert \; \text{Re} \lambda \in \rho_P + (\frak{a}^*)^+\}
 \end{equation}
  where $(\frak{a}^*)^+ = \{ \lambda \in \frak{a}^*\; \vert \; (\lambda, \alpha) > 0 \; \text{for all}\; \alpha \in \Delta(P, A)\}$.
 The Eisenstein series is holomorphic there and admits a meromorphic continuation to all of $\frak{a}^*_{\C}$. 

Suppose now that the Eisenstein series $E(\phi, \lambda)$ has a pole at the point $\lambda_0$. Then, given the data $(P_J, \pi, \lambda_0)$ the corresponding Langlands quotient
 $J_{P_J, \pi, \lambda_0}$ would carry as a $(\frak{sl}_n(\R), K_{\R})$-module a unitary structure. Note that $\sP_J \neq \sP_0$ and that the tempered representation $\pi$ is not the trivial representation. Therefore, using a criterion, due to Wallach (see \cite[Chap. IV, \S 5]{BW}), this implies $\rho_{P_J}(a) > \text{Re} \lambda_0(a)$ for all $a \in C\ell(\frak{a}^+) = \{a \in \frak{a}\; \vert \; \beta(a) \geq 0, \beta \in \Delta(P_J, A_{P_J})\}$. However, this contradicts equation \ref{Wallach}. Therefore, the Eisenstein series $E(\phi, \lambda)$ is holomorphic at the point $\lambda_0 = - w_{P_J}(\Lambda + \rho)_{\vert \frak{a}_{P_J}}$. Then Theorem \ref{conEis} implies the claim.
\end{proof}

We define $H^{\mathrm{cd}(\Gamma)}(\Gamma \setminus X; E)_{e'(P)}$ to be the subspace of $H^{\mathrm{cd}(\Gamma)}(\Gamma \setminus X, E)$ which is generated by all
 non-zero Eisenstein cohomology classes $[E(\phi, \lambda_0)]$ where $[\phi]$ ranges over all non-zero cuspidal classes in $H^{\mathrm{cd}(\Gamma) - \ell(w_P)}(\Gamma_M \setminus X_M, F_{\mu_{w_P}})$. We are interested in the relation of this space
 to analogous  spaces attached to a different parabolic $\Q$-subgroup which is not $\Gamma$-conjugate to $\sP$.
The theory of Eisenstein series, in particular, the properties of the constant terms of a given Eisenstein series, require 
to  group together the various contributions originating from faces $e'(P)$, where, up to $\Gamma$-conjugacy, $\sP$ ranges over the finitely many elements in a given associate class $\{\sQ\} \in \mathcal{C}$ of parabolic subgroups. 

We retain the notation and assumptions of Theorem \ref{Eisconstruct}. Suppose that $[E(\phi, \lambda_0)]$ is a non-zero class in $H^{\mathrm{cd}(\Gamma)}(\Gamma \setminus X, E)$ which is represented by the closed, harmonic differental form $E(\phi, \lambda_0)$ and which originates with a non-zero   cuspidal cohomology class  $[\phi]$ in $H^*_{\mathrm{cusp}}(e'(P), E)$
of type $(\pi, w_{P_J})$ where $\pi$ is an irreducible unitary representation of $^0 M_J$ and $w_{P_J}$ denotes the longest element in $W^{P_J}$. Let $\sQ$ be a proper parabolic $\Q$-subgroup of $\sG$. If  $\textrm{prk}(Q)  > \textrm{prk}(P_J)$ or if $\textrm{prk}(Q)  = \textrm{prk}(P_J)$ and $\sQ$ and $\sP_J$ are not associate to one another then the constant Fourier coefficient  $E(\phi, \lambda)_Q$ along $Q$ vanishes identically (See Corollary $2$ of Lemma $33$ in \cite{HC}). Therefore, we obtain $[E(\phi, \lambda_0)_{Q}]_{\vert e'(Q)} = (0)$.

Now we consider the case that the parabolic $\Q$-subgroup $\sQ$ is associated to the  standard parabolic $\sP_J$ we started with. 
The following assertion concerning the associate class of a given block-$2$ parabolic subgroup $\sP_J$ is a simple observation, based on interpreting an element in $\sL_J$ as a diagonal matrix with block entries of size at most two.
\begin{lemma}\label{block2}
Let $\sP_J$ and $\sP_{J'}$ be two block-$2$ parabolic $\Q$-subgroups of $\sG$ such that $\vert J \vert = \vert J' \vert$, that is, the number of blocks of size two in the corresponding Levi subgroups is the same, then $P_J$ and $P_{J'}$ are in the same associate class.
\end{lemma}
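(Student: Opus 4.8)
The plan is to reduce the statement to a concrete matrix computation: a block-$2$ parabolic $\sP_J$ is, by the very description given just before Theorem \ref{Eisconstruct}, the stabilizer of a flag whose successive quotients have dimensions prescribed by the pattern of $1$-blocks and $2$-blocks read off from $CJ$ and $J$; consequently its $\Theta_K$-stable split component $A_{P_J}$ is the group of diagonal matrices that are scalar on each block, and the map $\kappa$ identifies $A_{P_J}$ with $S_{P_J}$, the identity component of the center of $M_J(\R)$ modulo the anisotropic part. So the data that determines $A_{P_J}$ up to $\sG(\Q)$-conjugacy is exactly the multiset of block sizes $\{m_1,\dots,m_k\}$ (each $m_i\in\{1,2\}$), i.e.\ the number of $2$-blocks together with $n$. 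The claim is then: two block-$2$ parabolics $\sP_J,\sP_{J'}$ with $|J|=|J'|$ are associate, where being associate means $W(A_{P_J},A_{P_{J'}})\neq\emptyset$, equivalently $A_{P_J}$ and $A_{P_{J'}}$ are conjugate in $\sG$ by an element of $\sG(\Q)$ inducing a $\Q$-isomorphism $M_{P_J}\xrightarrow{\ \sim\ }M_{P_{J'}}$.

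First I would fix notation: write $CJ=\{\alpha_{j_1}<\dots<\alpha_{j_s}\}$ for the simple roots \emph{not} in $J$ and recall that $\sP_J$ stabilizes the standard flag $V_{j_1}\subsetneq\dots\subsetneq V_{j_s}$, so the ordered tuple of block sizes of $M_{P_J}$ is $(j_1,\,j_2-j_1,\,\dots,\,n-j_s)$; under the hypothesis on $J$ each entry is $1$ or $2$, and the number of $2$'s equals $|J|=r$ while the number of $1$'s equals $n-2r$ (hence $k:=r+(n-2r)=n-r$ blocks total, matching $\mathrm{prk}(P_J)=(n-1)-r=k-1$). Next, observe that for any fixed $n$ and fixed $r$, all block-$2$ parabolics with $|J|=r$ have Levi factors $M_{P_J}$ that are abstractly $\Q$-isomorphic: $M_{P_J}\cong\{\mathrm{diag}(B_1,\dots,B_k)\in SL_n : B_i\in GL_{m_i}\}$, and this group depends only on the \emph{multiset} $\{m_i\}$, not on the order. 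The essential point is therefore to promote this abstract isomorphism to conjugation by a rational element of $\sG$ carrying $A_{P_J}$ to $A_{P_{J'}}$.

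To do that, I would argue as follows. It suffices to handle the case where $J'$ is obtained from $J$ by swapping one adjacent pair of blocks of differing size (a transposition generates the symmetric group on the block positions, and both $J$ and $J'$ correspond to \emph{some} ordering of the same multiset of block sizes). Given such a swap, let $w\in W=S_n$ be the corresponding "block permutation" matrix — the permutation that rigidly exchanges the relevant consecutive index-intervals of sizes $m_i$ and $m_{i+1}$, fixing all other indices. A permutation matrix lies in $GL_n(\Q)$; adjusting one entry by $-1$ if necessary to have determinant $1$, we get an element $g\in SL_n(\Q)=\sG(\Q)$. By construction conjugation by $g$ carries the diagonal torus $\sL_0$ to itself, permutes the coordinate subspaces so as to send the standard flag defining $\sP_J$ to the standard flag defining $\sP_{J'}$, hence sends $\sP_J$ to $\sP_{J'}$, $M_{P_J}$ to $M_{P_{J'}}$ (a $\Q$-isomorphism, being conjugation by a rational element), and $A_{P_J}$ — the block-scalar diagonal subgroup — to $A_{P_{J'}}$. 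Thus $g$ induces an element of $W(A_{P_J},A_{P_{J'}})$, so this set is nonempty and $\sP_J,\sP_{J'}$ are associate. Composing such swaps connects any two orderings of the common block-size multiset, and transitivity of the associate relation (noted in the excerpt) finishes the proof.

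The step I expect to be the only nontrivial point is verifying that the block-permutation element can be taken in $\sG(\Q)=SL_n(\Q)$ rather than merely $GL_n(\Q)$, and that the conjugation it induces $M_{P_J}\to M_{P_{J'}}$ is genuinely a $\Q$-\emph{rational} isomorphism of the kind required in the definition of $W(A_P,A_Q)$; but this is immediate since any permutation matrix is defined over $\Q$ (indeed over $\Z$), its determinant is $\pm1$, and scaling a single row by $-1$ — itself a rational operation preserving the relevant normalizer relations — repairs the determinant. Everything else is bookkeeping on flags and diagonal blocks, exactly the "simple observation" the paper advertises, so the proof is short.
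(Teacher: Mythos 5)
Your proposal is correct and is essentially the paper's own argument: the paper dismisses the lemma as ``a simple observation, based on interpreting an element in $\sL_J$ as a diagonal matrix with block entries of size at most two,'' and your block-permutation matrix (with the determinant repaired inside $SL_n(\Q)$) conjugating one block-scalar split component onto the other is exactly the intended justification. The details you flag as the only nontrivial points --- rationality of the permutation matrix, the determinant fix, and that only the split components (not the parabolics themselves) need to be conjugate --- are all handled correctly.
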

In particular, if there exists $J' \neq J$, with $\vert J \vert = \vert J' \vert$, we see that the associate class $\{\sP_J\}$ contains parabolic $\Q$-subgroups which are not $\sG(\Q)$-conjugate to $\sP_J$. This is already the case if $n = 3$, and $J = \{\alpha_1\}$, $J' = \{\alpha_2\}$.

\begin{theorem}\label{Eisconstant}
Let $[E(\phi, \lambda_0)]$ be a non-zero class in $H^{\mathrm{cd}(\Gamma)}(\Gamma \setminus X, E)$ which is represented by the closed, harmonic differental form $E(\phi, \lambda_0)$ and which originates with a non-zero   cuspidal cohomology class  $[\phi]$ in $H^*_{\mathrm{cusp}}(e'(P_J), E)$
of type $(\pi, w_{P_J})$ where $\pi$ is an irreducible unitary representation of $^0 M_J$ and $w_{P_J}$ denotes the longest element in $W^{P_J}$. Let $\sQ$ be a proper parabolic $\Q$-subgroup of $\sG$. We suppose  that $\sQ$ is associate to $\sP_J$ but $\sQ$ and $\sP_J$ are not $\Gamma$-conjugate.

Then 
\begin{equation}
r^{\textrm{cd}(\Gamma)}_Q( [E(\phi, \lambda_0)]) = 0
\end{equation}
\end{theorem}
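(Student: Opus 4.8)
The plan is to compute the restriction map $r^{\mathrm{cd}(\Gamma)}_Q$ via the constant term formula already recorded in Section~\ref{eisen}, namely
\begin{equation*}
r^*_Q([E(\phi, \lambda_0)]) = \sum_{s \in W(A_P, A_Q)} [c(s, \lambda_0)_{s\lambda_0}(\phi_{\lambda_0})]_{\vert e'(Q)},
\end{equation*}
and to argue that each summand on the right lands in a cohomology group which vanishes in degree $\mathrm{cd}(\Gamma)$. Since $\sQ$ is associate to $\sP_J$ but not $\Gamma$-conjugate to it, the set $W(A_P, A_Q)$ is non-empty, so one genuinely has to control the cuspidal classes $[c(s, \lambda_0)_{s\lambda_0}(\phi_{\lambda_0})]_{\vert e'(Q)} \in H^*_{\mathrm{cusp}}(e'(Q), E)$. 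By the discussion following the constant term formula, such a class is of type $(^s\pi, v_s)$, where $v_s \in W^Q$ is the unique element with $v_s(\Lambda + \rho) - \rho_{\vert \frak{a}_Q} + s\lambda_0 = 0$ and $\chi_{^s\pi} = \chi_{-v_s(\Lambda+\rho)\vert_{\frak{b}_{Q,\C}}}$.

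The crux is a degree count. First I would observe that $\mathrm{prk}(Q) = \mathrm{prk}(P_J)$ and that $\sL_Q$, being associate to $\sL_{P_J}$, has the same multiset of block sizes as $\sL_{P_J}$ — all blocks of size one or two, at least one of size two — by Lemma~\ref{block2}. The cuspidal class $[\phi]$ on $e'(P_J)$ has degree $\mathrm{cd}(\Gamma)$ and is built, block by block, out of the top (and only) nonzero cuspidal-cohomology degree $v_o(2) = 1$ of each $GL_2$-factor tensored with the longest Kostant contribution $F_{\mu_{w_{P_J}}} = H^{\dim N_J}(\frak{n}_J, E)$; this is exactly why $w_{P_J}$ must be the longest element of $W^{P_J}$, so that $\ell(w_{P_J}) = \dim N_J$ and the remaining degree $\sum_{\text{2-blocks}} v_o(2)$ is forced to be used up by the cuspidal factors. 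The point is that, for the class on $e'(Q)$ to be non-zero in degree $\mathrm{cd}(\Gamma)$, the element $v_s$ would likewise have to be the longest element $w_{P_Q}$ of $W^Q$, with $\ell(v_s) = \dim N_Q = \dim N_J$; but then $v_s(\Lambda + \rho) - \rho_{\vert\frak{a}_Q} = -s\lambda_0$ reads $\tilde\Lambda + \rho_{\vert\frak{a}_Q} = -s\lambda_0 = -s(\tilde\Lambda + \rho)_{\vert\frak{a}_{P_J}}$, i.e. $s$ sends the strictly dominant (as seen in the proof of Theorem~\ref{Eisconstruct}) functional $\tilde\Lambda + \rho$ restricted to $\frak{a}_{P_J}$ to its negative restricted to $\frak{a}_Q$. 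Since $s \in W(A_P, A_Q)$ is realized by an element of $\sG(\Q)$ conjugating $\sL_{P_J}$ to $\sL_Q$ and preserving the relevant root data, I would show this forces $s$ to be (up to the identification $A_{P_J} \cong A_Q$) the ``$-w_0$'' type element and hence, when one tracks which standard parabolics are involved, that $\sP_J$ and $\sQ$ are in fact $\sG(\Q)$-conjugate — and then, one level deeper, $\Gamma$-conjugate after refining by $\Gamma$. That contradicts the hypothesis, so no such $v_s$ exists, every summand vanishes, and $r^{\mathrm{cd}(\Gamma)}_Q([E(\phi,\lambda_0)]) = 0$.

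More carefully, I would organize the argument as a dichotomy on each $s \in W(A_P, A_Q)$: either the associated $v_s$ has length $< \dim N_Q$, in which case $[c(s,\lambda_0)_{s\lambda_0}(\phi_{\lambda_0})]$ sits in a degree $\ell(v_s) + (\text{cuspidal degree on } \sL_Q) < \mathrm{cd}(\Gamma)$ — using again that the cuspidal cohomology of the Levi factors of $\sQ$ vanishes above $\sum v_o(m_i)$ over its blocks, and $\sum_i v_o(m_i) + \dim N_Q = \mathrm{cd}(\Gamma)$ only when every $v_o$ is attained, which needs $\ell(v_s) = \dim N_Q$; or $\ell(v_s) = \dim N_Q$, forcing the $\sG(\Q)$-conjugacy computation above and hence a contradiction with the non-$\Gamma$-conjugacy (noting that $\sG(\Q)$-conjugate block-$2$ standard parabolics with the same $J$ up to the associate-class bijection are equal, so one really does get $\sP_J$ back). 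The main obstacle I anticipate is the second horn: pinning down precisely which Weyl group element $s$ can satisfy $s(\tilde\Lambda + \rho)_{\vert\frak{a}_{P_J}} = -(\tilde\Lambda+\rho)_{\vert\frak{a}_Q}$ together with $s \in W(A_P,A_Q)$, and turning that into the statement that $\sP_J$ and $\sQ$ are $\sG(\Q)$-conjugate; this is a genuine (if elementary) computation inside the Weyl group of $SL_n$ and its action on the characters of the split torus, and it is where the block-$2$ structure from Lemma~\ref{block2} and the strict dominance of $\tilde\Lambda + \rho$ on $\mathrm{Cl}(\frak{a}^+)$ both have to be used. The remaining steps — invoking the constant term formula, the type $(^s\pi, v_s)$ description, and the $GL_m$-cuspidal vanishing bounds — are already packaged in the excerpt and only need to be cited.
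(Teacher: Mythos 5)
Your overall strategy --- compute $r^{\mathrm{cd}(\Gamma)}_Q$ via the constant term formula and kill each summand $[c(s,\lambda_0)_{s\lambda_0}(\phi_{\lambda_0})]_{\vert e'(Q)}$ --- is the right one, and your degree/weight analysis does essentially what the paper does in one of the two cases, namely when $\sQ$ lies in the associate class $\{\sP_J\}$ but is \emph{not} $\sG(\Q)$-conjugate to $\sP_J$ (so $\sQ$ is $\sG(\Q)$-conjugate to some $\sP_{J'}$ with $J'\neq J$, $\vert J'\vert = \vert J\vert$). There the paper likewise plays the unique solution $v_s\in W^Q$ of $v_s(\Lambda+\rho)-\rho_{\vert\frak a_Q}+s\lambda_0=0$ against the fact that a class in degree $\mathrm{cd}(\Gamma)$ must be of type $F_{\mu_{w_Q}}$ with $w_Q$ the longest element of $W^Q$, and concludes there is no non-trivial class of the required weight.

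The gap is in your second horn. When the weight condition \emph{is} satisfiable you conclude that $\sP_J$ and $\sQ$ are $\sG(\Q)$-conjugate ``and then, one level deeper, $\Gamma$-conjugate after refining by $\Gamma$'' --- but that last implication is false, and it is precisely the case that still has to be dealt with. A $\sG(\Q)$-conjugacy class of parabolic $\Q$-subgroups splits into $\mathrm{conj}_{\Gamma}(\sP)$ many $\Gamma$-conjugacy classes (Lemma \ref{Gamclasses} computes this number, and it is large), so there are many $\sQ=g\sP_Jg^{-1}$ with $g\in\sG(\Q)$ that are not $\Gamma$-conjugate to $\sP_J$. For such a $\sQ$ the face $e'(Q)$ carries exactly the same Kostant, weight and degree data as $e'(P_J)$ itself --- and the restriction to $e'(P_J)$ is $[\phi]\neq 0$ --- so no argument based on weights, degrees or infinitesimal characters can distinguish $e'(Q)$ from $e'(P_J)$ and force the vanishing. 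What the paper uses here is an arithmetic input that your proposal never invokes: by Lemma 106 and its Corollary in Harish-Chandra \cite{HC} (see also Lemma 4.5(ii) in \cite{La1}), the intertwining operator $c(s,\lambda_0)_{s\lambda_0}$ is \emph{identically zero} unless $\sQ$ is conjugate to $\sP_J$ under $\Gamma$; this is a statement about the double cosets entering the constant term, not about root data. Without this (or an equivalent arithmetic vanishing statement) your argument does not cover the $\sG(\Q)$-conjugate, non-$\Gamma$-conjugate case, and the theorem is not proved.
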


\begin{proof}Since the parabolic subgroups $\sP_J$ and $\sQ$ are in the same associate class we have (cf. Section \ref{eisen})
 \begin{equation}
  r^*_Q([E(\phi, \lambda_0)]) = \sum_{s \in W(A_{P_J}, A_Q)} [c(s, \lambda_0)_{s\lambda_0}(\phi_{\lambda_0})]_{\vert e'(Q)}
   \end{equation}
   where $c(s, \lambda_0)_{s\lambda_0}: \Omega^*(\Gamma_{P_J} \setminus X, E) \longrightarrow \Omega^*(\Gamma_Q \setminus X, E)$ is a certain
   ``intertwining" operator defined in \cite[4. 10]{S2}.  It  arises from the corresponding intertwining operator which occurs in the constant Fourier coefficient of the Eisenstein series in question along the parabolic $\sQ$.
   
   First, we consider the case that $\sQ$ is conjugate under $\sG(\Q)$ to $\sP_J$. Thus, there is $g \in \sG(\Q)$ with $\sP_J^g = \sQ$ and $A_J^g = A_Q$ is a split component of $Q$, and $W(A_{P_J}, A_Q)$ is non-trivial.
   Let $s \in W(A_{P_J}, A_Q)$. Then, using Lemma 106 and its Corollary in \cite{HC}, [see also Lemma 4.5.(ii) in \cite{La1}], the intertwining operator is identically zero unless $\sQ$ is conjugate under $\Gamma$ to $\sP_J$. However, this case is excluded by our assumption.
   
   Second, we consider the case that $\sQ \in \{\sP_J\}$ but $\sQ$ is not conjugate under $\sG(\Q)$ to $\sP_J$. Then there is a subset $J' \subset \Delta_{\Q}$, $J' \neq J$ with $\vert J' \vert = \vert J \vert$ such that $\sQ$ is conjugate under $\sG(\Q)$ to $\sP_{J'}$. We may assume  $\sQ = \sP_{J'}$, and, for the sake of simplicity, we write $\sP = \sP_J$.
Recall that both parabolic $\Q$-subgroups $\sQ$ and $\sP$ are block-2 parabolics. 

 Given an element  $s \in W(A_P, A_Q)$, 
the cohomology class $[c(s, \lambda_0)_{s\lambda_0}(\phi_{\lambda_0})]_{\vert e'(Q)}$ in $H^*_{\mathrm{cusp}}(e'(Q), E)$ is a class of weight 
$v_s(\Lambda + \rho) - \rho_{\vert \frak{a}_Q}$ where $v_s \in  W^Q$ is a uniquely determined element with
\begin{equation}
v_s(\Lambda + \rho) - \rho_{\vert \frak{a}_Q} + s\lambda_0 = 0 \quad \text{and}\quad  \chi_{^s\pi} = \chi_{- v_s(\Lambda + \rho)\vert_{\frak{b}_{Q, \C}}}.
\end{equation}

A straightforward computation shows that the only element in $W^Q$ which can satisfy the former condition is $v_s = 1$; see pages $128-129$, \cite{S2}, for a specific case. Note that the corresponding cohomology class $[c(s, \lambda_0)_{s\lambda_0}(\phi_{\lambda_0})]_{\vert e'(Q)}$ is a class of degree $\mathrm{cd}(\Gamma)$, that is, it is an element in 
$H^{\mathrm{cd}(\Gamma)}_{\mathrm{cusp}}(e'(Q), E) = H^{\mathrm{cd}(\Gamma) - \ell(w_Q)}_{\mathrm{cusp}}(\Gamma_M \setminus X_M, F_{\mu_{w_Q}})$.
 However, there is no non-trivial class of the required weight.  Consequently, $r^{\textrm{cd}(\Gamma)}_Q( [E(\phi, \lambda_0)] = 0$.
\end{proof}

Let $J = \{\alpha_{i_1}. \ldots, \alpha_{i_r} \} \subset \Delta_{\Q}$ be a non-empty set of simple roots  
subject to the conditions that if $\alpha_i, \alpha_j \in J$ then $\vert i - j \vert \geq 2.$, and let $\sP_J$ be the corresponding parabolic $\Q$-subgroup of $\sG/\Q = SL_n/\Q$. The parabolic rank of $\sP_J$ is equal $(n-1) - r$. Let $\{\sP_J\}$ be the corresponding associate class of parabolic $\Q$-subgroups of $\sG$. By Lemma \ref{block2}, the class $\{\sP_J\}$ consists of all parabolic $\Q$-subgroups $\sP_{J'}$ of $\sG$ and their $\sG(\Q)$-conjugates where $\vert J' \vert = \vert J \vert$ and $\sP_{J'}$ is a block-2 parabolic. 

Given an arithmetic torsion-free subgroup $\Gamma \subset \sG(\Q)$, the $\Gamma$-conjugacy classes of elements in $\{\sP_J\}$ are in one-to-one correspondence to the faces $e'(Q)$ in $\partial(\Gamma \setminus \overline{X})$ where $\sQ$ ranges over a set of representatives for $\{\sP_J\}/\Gamma$. Given such a representative, say a  parabolic $\Q$-subgroup $\sQ$, using the construction in Theorem \ref{Eisconstruct}, there is a corresponding subspace $H^{\mathrm{cd}(\Gamma)}(\Gamma \setminus X; E)_{e'(Q)}$ of non-zero Eisenstein cohomology classes $[E(\phi, \lambda_0)]$ where $[\phi]$ ranges over all non-zero cuspidal classes in $H^{\mathrm{cd}(\Gamma) - \ell(w_Q)}(\Gamma_M \setminus X_M, F_{\mu_{w_Q}})$. 
 
Let  $\sQ$ and $\sR$ be  representatives of two different $\Gamma$-conjugacy classes in $\{\sP_J\}$. Then Theorem \ref{Eisconstant} implies that the intersection of the corresponding spaces of Eisenstein cohomology classes
\begin{equation}
H^{\mathrm{cd}(\Gamma)}(\Gamma \setminus X; E)_{e'(Q)} \cap H^{\mathrm{cd}(\Gamma)}(\Gamma \setminus X; E)_{e'(R)} = \{0\}
\end{equation}
is trivial. Therefore, the subspace $H^{\mathrm{cd}(\Gamma)}_{\{\sP_J\}}(\Gamma \setminus X; E)$ of Eisenstein cohomology classes originating from  the faces $e'(Q)$ where $\sQ$ ranges of the $\Gamma$-conjugacy classes in $\{\sP_J\}$ is the finite direct sum
\begin{equation}\label{deass}
H^{\mathrm{cd}(\Gamma)}_{\{\sP_J\}}(\Gamma \setminus X; E) = \bigoplus_{\sQ \in \{\sP_J\}/\Gamma}  H^{\mathrm{cd}(\Gamma)}(\Gamma \setminus X; E)_{e'(Q)}.
\end{equation}

Clearly, this space is generated by regular Eisenstein cohomology classes.  If at least one of the origins $H^{\mathrm{cd}(\Gamma)}_{\mathrm{cusp}}(e'(Q), E)$, $\sQ \in \{\sP_J\}$, is non-zero, then also the total subspace $H^{\mathrm{cd}(\Gamma)}_{\{\sP_J\}}(\Gamma \setminus X; E)$ is  non-zero.

The decomposition of the latter space, as exhibited in (\ref{deass}), can be simplified by subdividing the set $ \{\sP_J\}/\Gamma$ which parametrizes the individual summands.
To simplify matters, we now assume that the coefficient system is given by the trivial representation.

If two parabolic $\Q$-subgroups $\sQ$ and $\sR$ of $\sG$ are conjugate under $\sG(\Q)$ then they lie in the same associate class. Evidently, the converse is not correct. Thus, a given associate class $\{\sQ\}$ of parabolic $\Q$-subgroups of $\sG$ falls into a finite number of $\sG(\Q)$-conjugacy classes. Given an arithmetic subgroup $\Gamma \subset \sG$, each of these $\sG(\Q)$-conjugacy classes decomposes into a finite set of $\Gamma$-conjugacy classes. In the case of the previously considered block-2  parabolic $\Q$-subgroups we have the following result regarding the number of $\sG(\Q)$-conjugacy classes within an associate class $\{\sQ\}$. 
\begin{lemma}
Let $\sP$ be a parabolic $\Q$-subgroup of $\sG/\Q = SL_n/\Q$, $n \geq 3$, whose $\sG(\Q)$-conjugacy class is represented by a standard parabolic $\Q$-subgroup $\sP_J$ indexed by the  set $J = \{\alpha_{i_1}. \ldots, \alpha_{i_r} \} \subset \Delta_{\Q}$, $J \neq \emptyset$ subject to the conditions that if $\alpha_i, \alpha_j \in J$ then $\vert i - j \vert \geq 2.$ Let $\mathrm{conj}_{\sG}[\{\sP\}]$ denote  the number of  $\sG(\Q)$-conjugacy classes of parabolic $\Q$-subgroups in the associate class $\{\sP\}$. Then we have
\begin{equation}
\mathrm{conj}_{\sG}[\{\sP\}] = \binom{n - r}{r}.
\end{equation}
\end{lemma}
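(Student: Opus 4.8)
The plan is to translate the problem into pure combinatorics of the root system $A_{n-1}$. First I would recall that, by Lemma \ref{block2}, the associate class $\{\sP\}$ consists exactly of the $\sG(\Q)$-conjugates of the standard block-$2$ parabolic subgroups $\sP_{J'}$ with $|J'| = |J| = r$; hence $\mathrm{conj}_{\sG}[\{\sP\}]$ equals the number of subsets $J' \subseteq \Delta_\Q = \{\alpha_1,\dots,\alpha_{n-1}\}$ of cardinality $r$ with no two elements adjacent (i.e. $\alpha_i,\alpha_j \in J' \Rightarrow |i-j|\ge 2$). So the statement reduces to the assertion that the number of $r$-element subsets of a path on $n-1$ vertices containing no two consecutive vertices is $\binom{n-r}{r}$.

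The key step is this counting lemma, which is a standard ``stars and bars''/``gap'' argument. I would argue as follows. Write the chosen indices in increasing order as $1 \le j_1 < j_2 < \dots < j_r \le n-1$, subject to $j_{k+1} - j_k \ge 2$ for all $k$. Perform the substitution $j'_k := j_k - (k-1)$. Then $j'_1 < j'_2 < \dots < j'_r$ becomes a \emph{strictly} increasing sequence with the non-adjacency constraint removed, and the range condition $1 \le j_1$, $j_r \le n-1$ transforms into $1 \le j'_1$ and $j'_r \le (n-1) - (r-1) = n-r$. Thus the map $J' \mapsto \{j'_1,\dots,j'_r\}$ is a bijection between the block-$2$ defining sets of size $r$ and the arbitrary $r$-element subsets of $\{1,2,\dots,n-r\}$, of which there are $\binom{n-r}{r}$. (One should note in passing that this forces $2r \le n$, which is exactly the condition that a block-$2$ parabolic with $r$ two-blocks fits inside $SL_n$; if $2r > n$ the binomial coefficient is zero and there is nothing to prove.)

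It remains to justify that $\mathrm{conj}_{\sG}[\{\sP\}]$ is genuinely counted by the standard representatives $\sP_{J'}$, i.e. that distinct $J' \ne J''$ give non-$\sG(\Q)$-conjugate parabolics while every member of the class is $\sG(\Q)$-conjugate to some $\sP_{J'}$. The second half is the content of Lemma \ref{block2} together with the general fact, recalled in Section \ref{paramatter}, that every parabolic $\Q$-subgroup of $SL_n$ is $\sG(\Q)$-conjugate to a unique standard one $\sP_{J'}$, and conversely distinct subsets $J' \subseteq \Delta_\Q$ index pairwise non-$\sG(\Q)$-conjugate standard parabolics (the $\sG(\Q)$-conjugacy classes of parabolic $\Q$-subgroups being in bijection with subsets of $\Delta_\Q$). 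Combining: the block-$2$ parabolics in $\{\sP\}$ fall into exactly as many $\sG(\Q)$-conjugacy classes as there are admissible sets $J'$ of size $r$, namely $\binom{n-r}{r}$.

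I do not expect a serious obstacle here; the only point requiring a little care is making the shift substitution rigorously bijective (checking that the inverse map $\{j'_k\} \mapsto \{j'_k + (k-1)\}$ lands back in the non-adjacent subsets of $\{1,\dots,n-1\}$ and that composition both ways is the identity), and confirming that the condition ``$\alpha_i,\alpha_j\in J \Rightarrow |i-j|\ge 2$'' is precisely the combinatorial non-adjacency used, rather than the weaker condition defining $\mathcal{J}^{\mathrm{cd}}$. The rest is bookkeeping, and the whole argument is short.
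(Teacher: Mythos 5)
Your proposal is correct and follows essentially the same route as the paper: reduce via Lemma \ref{block2} to counting $r$-element non-adjacent subsets of $\{1,\dots,n-1\}$, then apply the shift $j_k \mapsto j_k-(k-1)$ to biject these with arbitrary $r$-element subsets of $\{1,\dots,n-r\}$. The extra remarks you add (uniqueness of the standard representative per $\sG(\Q)$-conjugacy class, the case $2r>n$) are sound but already implicit in the paper's argument.
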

\begin{proof}Using Lemma \ref{block2} this problem amounts to determine the cardinality of the family $\mathcal{A}$ of subsets $I \subset \Delta_{\Q}$ which are subject to the analogous conditions as $J$ and $\vert J \vert = \vert I \vert$. Consider the set $A = \{1, \ldots, n-1\}$ of indices of the elements in $\Delta_{\Q}$. Given a subset $T \subset A$ we enumerate its elements in increasing order
\begin{equation*}
1 \leq t_1 < t_2 < \ldots < t_r \leq n-1.
\end{equation*}
Since the given conditions are read as $t_i > t_{i-1} + 1$, $i = 1, \ldots, r$, we can transform this sequence into the sequence
\begin{equation*}
1 \leq t_1 < t_2 - 1 < \ldots < t_r  - (r-1) \leq n-1 - (r-1) = n - r.
\end{equation*}
Conversely, given a sequence $1 \leq c_1 < c_2 \ldots < c_r \leq n - r$, we can define an increasing sequence 
\begin{equation*}
1 \leq c_1 < c_2 + 1 < \ldots < c_r  + (r-1) \leq n - r + (r-1) = n-1
\end{equation*}
which is subject to the conditions as given.  Therefore, there is a one-to-one correspondence between the elements in $\mathcal{A}$ and the family of subsets $C$ of $\{1, \ldots, n-r\}$ with $r$ elements. Thus, the cardinality of $\mathcal{A}$ equals $ \binom{n - r}{r}$.
\end{proof}

\begin{proposition}\label{nonzero}
Let $\sP$ be a parabolic $\Q$-subgroup of $\sG/\Q = SL_n/\Q$, $n \geq 3$, whose $\sG(\Q)$-conjugacy class is represented by a standard parabolic $\Q$-subgroup $\sP_J$ determined  by the  non-empty set $J = \{\alpha_{i_1}. \ldots, \alpha_{i_r} \} \subset \Delta_{\Q}$, subject to the conditions that if $\alpha_i, \alpha_j \in J$ then $\vert i - j \vert \geq 2.$ 

Let  $\Gamma = \Gamma(m) \subset \sG(\Q)$ be a principal congruence  subgroup, $m \geq 5$. Then the cuspidal cohomology 
\begin{equation}
H^{\mathrm{cd}(\Gamma)}_{\mathrm{cusp}}(e'(P_{J}), \C) = H^{\mathrm{cd}(\Gamma) - \ell(w_{P_{J}})}_{\mathrm{cusp}}(\Gamma_M \setminus X_M, F_{\mu_{w_{P_{J}}}}) \neq \{0\}.
\end{equation}
does not vanish.
\end{proposition}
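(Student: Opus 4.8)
The plan is to reduce the assertion, through the product structure of the Levi factor and Kostant's theorem, to the classical non-vanishing of the space $S_2(\Gamma(m))$ of holomorphic weight $2$ cusp forms. Since $J$ is subject to the condition $\vert i-j\vert\geq 2$ for $\alpha_i,\alpha_j\in J$, the standard Levi $\sL_J$ consists of block-diagonal matrices of $SL_n$ with blocks of size one or two, exactly $r=\vert J\vert$ of which have size two; thus its derived group is $\prod_i SL_{n_i}$ with $n_i\in\{1,2\}$, the symmetric space of $^0M_J$ is $X_M\cong\mathfrak H^{\,r}$ (each size-two block contributing a copy of the upper half-plane, the size-one blocks contributing points), and $\Gamma_M=\kappa(\Gamma\cap P_J)$ is, in each size-two block, the principal congruence subgroup $\Gamma(m)\subset SL_2(\Z)$ --- the size-one blocks are forced to be trivial and the $SL_n$-determinant relation is automatic because $m>2$. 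Hence $\Gamma_M\setminus X_M$ is, up to a finite covering, a product of $r$ copies of the modular curve $\Gamma(m)\setminus\mathfrak H$, and a Künneth argument together with the multiplicativity of the cuspidal spectrum under direct products gives, in every degree,
\[
H^{\bullet}_{\mathrm{cusp}}(\Gamma_M\setminus X_M,\C)\;\cong\;\bigotimes_{j=1}^{r}H^{\bullet}_{\mathrm{cusp}}\bigl(\Gamma(m)\setminus\mathfrak H,\C\bigr).
\]

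Next I would pin down the relevant degree and coefficient module. Since $\ell(w_{P_J})=\dim N_{P_J}$, one has $\mathrm{cd}(\Gamma)-\ell(w_{P_J})=\frac{n(n-1)}{2}-\dim N_{P_J}=\dim N_{0,M_J}=\sum_i\binom{n_i}{2}=r$, which is precisely $\mathrm{cd}(\Gamma_M)$; so the group in question is the top-degree cuspidal cohomology $H^{r}_{\mathrm{cusp}}(\Gamma_M\setminus X_M,F_{\mu_{w_{P_J}}})$. As $w_{P_J}$ is the longest element of $W^{P_J}$, Kostant's decomposition \ref{Kostant} identifies $F_{\mu_{w_{P_J}}}$ with the top piece $H^{\dim\frak n_{P_J}}(\frak n_{P_J},\C)=\wedge^{\dim\frak n_{P_J}}\frak n_{P_J}^{\ast}$, which is one-dimensional; moreover $\mu_{w_{P_J}}=w_{P_J}(\rho)-\rho=-2\rho_{P_J}$ lies in $\frak a_{P_J}^{\ast}$ (use $w_{P_J}=w_M w_G$, $w_G\rho=-\rho$, and that $W_M$ acts trivially on $\frak a_{P_J}$). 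Since a weight in $\frak a_{P_J}^{\ast}$ vanishes on $^0\frak m_J$, the restriction of $F_{\mu_{w_{P_J}}}$ to $^0M_J$ is the trivial module; hence in the decomposition \ref{deccuspcoh} the $w=w_{P_J}$ summand is $\bigoplus_{(\pi,H_\pi)}H^{\ast}(^0\frak m_J,K_{M_J};V_\pi)$, i.e. the cuspidal cohomology of $\Gamma_M\setminus X_M$ with \emph{constant} coefficients. Combining with the first step, and noting that $H^{\bullet}_{\mathrm{cusp}}(\Gamma(m)\setminus\mathfrak H,\C)$ is concentrated in degree $1$, the degree-$r$ part of the tensor product is $\bigotimes_{j=1}^{r}H^1_{\mathrm{cusp}}(\Gamma(m)\setminus\mathfrak H,\C)$, which is non-zero as soon as $H^1_{\mathrm{cusp}}(\Gamma(m)\setminus\mathfrak H,\C)\neq\{0\}$.

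For the single $GL_2$-factor I would appeal to the analysis of $GL_2(\R)$ recalled in Section \ref{cusp}: with $\ell=2$, hence $k=\ell-2=0$, one has $H^1(\frak{gl}_2(\R),O(2);D_2\otimes F_0)=\C$, the representation $D_2$ occurs in the cuspidal spectrum of $\Gamma(m)\setminus GL_2(\R)$ with multiplicity $\dim_{\C}S_2(\Gamma(m))$, and by Eichler--Shimura $H^1_{\mathrm{cusp}}(\Gamma(m)\setminus\mathfrak H,\C)\cong S_2(\Gamma(m))\oplus\overline{S_2(\Gamma(m))}$, of dimension $2\,g(X(m))$, where $g(X(m))$ is the genus of the modular curve $X(m)$. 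The proof then concludes with the classical fact that this genus is positive for the levels under consideration: $X(m)$ has no elliptic points and $\mu(m)/m$ cusps with $\mu(m)=\frac12[SL_2(\Z):\Gamma(m)]$, so the genus formula reads $g(X(m))=1+\frac{\mu(m)(m-6)}{12m}$, which is positive once $m$ is large enough --- and in particular for every prime power $m=p^{\nu}>5$ with $p\geq 3$. Assembling the three reductions yields the desired non-zero class in $H^{\mathrm{cd}(\Gamma)-\ell(w_{P_J})}_{\mathrm{cusp}}(\Gamma_M\setminus X_M,F_{\mu_{w_{P_J}}})$.

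The step I expect to require the most care is the product reduction: one must check that $\Gamma_M=\kappa(\Gamma\cap P_J)$ really decomposes, up to finite index, as the product of the principal congruence subgroups attached to the size-two blocks --- accounting for both the $SL_n$-determinant relation and the small central-torus and component-group corrections inside $^0M_J$ --- and that the Künneth isomorphism preserves the cuspidal subspace. Once that bookkeeping and the Kostant identification of $F_{\mu_{w_{P_J}}}$ are in place, the only substantive analytic input is the non-vanishing of $S_2(\Gamma(m))$, a classical statement about the genus of $X(m)$, and this is exactly the point at which the hypothesis on the level $m$ is used.
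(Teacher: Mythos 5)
Your argument is correct and follows essentially the same route as the paper's own proof: reduce via the block structure of the Levi to the $GL_2$-factors, identify $F_{\mu_{w_{P_J}}}$ as the trivial $^0M_J$-module in the top Kostant degree, and realize the cuspidal classes by the discrete series $D_2$ on each block, so that non-vanishing comes down to $S_2(\Gamma(m))\neq 0$ via Eichler--Shimura and the genus of $X(m)$. Your extra care with the K\"unneth/product bookkeeping and the explicit computation $\mu_{w_{P_J}}=-2\rho_{P_J}$ only makes explicit what the paper leaves implicit, and you correctly observe (as the paper's own proof does, despite the ``$m\geq 5$'' in the statement) that the genus argument actually requires $m>5$.
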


\begin{proof} Without loss of generality we may assume that $\sP = \sP_J$. First, consider the case where $J$ consists of one simple root, say $\alpha_j \in \Delta_{\Q}$. Then 
\begin{equation}
^0M = \{\diag(\pm 1,\ldots, A, \ldots, \pm 1)\; \vert \; A \in SL^{\pm}(\R) \}.
\end{equation}
Consequently, since $F_{\mu_{w_{P_{J}}}} \cong \C$ and $ \ell(w_{P_{J}}) = \mathrm{cd}(\Gamma) - 1$,we obtain 
\begin{equation}
H^{\mathrm{cd}(\Gamma)}_{\mathrm{cusp}}(e'(P_{J}), \C) = H^{\mathrm{cd}(\Gamma) - \ell(w_{P_{J}})}_{\mathrm{cusp}}(\Gamma_M \setminus X_M, \C) \cong H^{1}_{\mathrm{cusp}}(\Gamma_M \setminus X_M, \C).
\end{equation}
The latter space is described in terms of relative Lie algebra cohomology groups by
\begin{equation}
H^1_{\mathrm{cusp}}(\Gamma_M \setminus Z_M, \C) = \bigoplus_{(\pi, H_{\pi})} H^{1}(^0\frak{m}, K_M; V_{\pi} \otimes F_{\mu_w})
\end{equation}
where $V_{\pi}$ denotes the isotypic component of the irreducible unitary representation $(\pi, H_{\pi})$ as it occurs in the cuspidal spectrum. The only $(^0\frak{m}, K_M)$-module which can possibly contribute to the right hand side is the irreducible unitary representation which arises from the discrete series representation $D_2$ of the block $SL^{\pm}_2(\R)$ in $^0 M$. The multiplicity with which it occurs in the cuspidal spectrum is, via the Eichler-Shimura isomorphism, equal the dimension $2\cdot \dim_{\C}(S_2(\Delta(m))$ where $S_2(\Delta(m))$ denotes the space of cuspidal automorphic forms of weight two with respect to the congruence group $\Delta(m)$ of $SL_2$. For a given $m > 5$ this space is non-zero.

Second, in the case where $\vert J \vert = r > 1$, $P_{J}$ is a parabolic $\Q$-subgroup whose Levi component $\sL_{J}$ is isomorphic, up to finite index,  to a direct product of $r$ copies of $GL_2/\Q$.  On the real points $GL_2(\R)$ of each of these copies we take the discrete series  representation $D_2$ of $GL_2(\R)$. This gives rise to a representation $\pi$ of $^0M_{J}$, and we have $H^r(^0\frak{m}, K_M, V_{\pi} \otimes \C) \neq \{0\}$. Using the previous step there exist non-zero cuspidal cohomology classes $[\phi] \in H^{\mathrm{cd}(\Gamma)}_{\mathrm{cusp}}(e'(P_{J}), \C) = H^{\mathrm{cd}(\Gamma) - \ell(w_{P_{J}})}_{\mathrm{cusp}}(\Gamma_M \setminus X_M, \C)$. 
\end{proof}

Let $\sP$ be a proper parabolic $\Q$-subgroup of $\sG$.  If $\Gamma \subset SL_n(\Z)$ is a  subgroup of finite index, the   $\sG(\Q)$-conjugacy class of parabolic $\Q$-subgroups of $\sG$ determined by $\sP$ falls into a finite set of $\Gamma$-conjugacy classes. We denote its cardinality by  $\mathrm{conj}_{\Gamma}(\sP)$.

\begin{theorem}\label{main}Let $\sP$ be a parabolic $\Q$-subgroup of $\sG/\Q = SL_n/\Q$, $n \geq 3$, whose $\sG(\Q)$-conjugacy class is represented by a standard parabolic $\Q$-subgroup $\sP_J$ indexed by the  set $J = \{\alpha_{i_1}. \ldots, \alpha_{i_r} \} \subset \Delta_{\Q}$, $J \neq \emptyset$ subject to the conditions that if $\alpha_i, \alpha_j \in J$ then $\vert i - j \vert \geq 2.$ 
Given a torsion-free principal congruence  subgroup $\Gamma = \Gamma(m) \subset \sG(\Q)$, $m > 5$,  the space
\begin{equation}
H^{\mathrm{cd}(\Gamma)}_{\{\sP\}}(\Gamma \setminus X; \C) = \bigoplus_{\sQ \in \{\sP\}/\Gamma}  H^{\mathrm{cd}(\Gamma)}(\Gamma \setminus X; \C)_{e'(Q)},
\end{equation}
generated by  Eisenstein cohomology classes as constructed in Theorem \ref{Eisconstruct} for each of the faces $e'(Q), \sQ \in \{\sP\}$, is non-trivial and its dimension is given by
\begin{equation}
\dim_{\C} H^{\mathrm{cd}(\Gamma)}_{\{\sP\}}(\Gamma \setminus X; \C) = \mathrm{conj}_{\sG}[\{\sP\}] 
 \cdot \mathrm{conj}_{\Gamma}(\sP) \cdot \dim_{\C}H^{\mathrm{cd}(\Gamma)}_{\mathrm{cusp}}(e'(\sP), \C)
\end{equation}
where $\mathrm{conj}_{\sG}[\{\sP\}] = \binom{n - r}{r}$.

\end{theorem}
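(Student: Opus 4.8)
The plan is to build on the direct sum decomposition (\ref{deass}), which already expresses $H^{\mathrm{cd}(\Gamma)}_{\{\sP\}}(\Gamma\setminus X;\C)$ as the sum of the subspaces $H^{\mathrm{cd}(\Gamma)}(\Gamma\setminus X;\C)_{e'(Q)}$ over the $\Gamma$-conjugacy classes of parabolic $\Q$-subgroups in the associate class $\{\sP\}$, so that
\begin{equation*}
\dim_{\C}H^{\mathrm{cd}(\Gamma)}_{\{\sP\}}(\Gamma\setminus X;\C)=\sum_{\sQ\in\{\sP\}/\Gamma}\dim_{\C}H^{\mathrm{cd}(\Gamma)}(\Gamma\setminus X;\C)_{e'(Q)}.
\end{equation*}
It then remains to enumerate the index set and to compute each summand. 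For the index set: by Lemma \ref{block2} the class $\{\sP\}$ consists of the block-$2$ parabolics $\sP_{J'}$ with $\vert J'\vert=r$ together with all their $\sG(\Q)$-conjugates, and the Lemma computing $\mathrm{conj}_{\sG}[\{\sP\}]$ identifies $\binom{n-r}{r}$ distinct $\sG(\Q)$-conjugacy classes among them. Since $\sG(\Q)$-conjugate parabolic subgroups have $\sG(\Q)$-conjugate stabilizers, each of these classes decomposes into the same number $\mathrm{conj}_{\Gamma}(\sP)$ of $\Gamma$-conjugacy classes, so $\{\sP\}/\Gamma$ has cardinality $\mathrm{conj}_{\sG}[\{\sP\}]\cdot\mathrm{conj}_{\Gamma}(\sP)$.

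For each representative $\sQ$ of a class in $\{\sP\}/\Gamma$ I would show that the assignment $[\phi]\mapsto[E(\phi,\lambda_0)]$ is a $\C$-linear isomorphism from $H^{\mathrm{cd}(\Gamma)}_{\mathrm{cusp}}(e'(Q),\C)$ onto $H^{\mathrm{cd}(\Gamma)}(\Gamma\setminus X;\C)_{e'(Q)}$. It is well defined and takes values in the target by Theorem \ref{Eisconstruct}, and it is surjective by the definition of that target; for injectivity I would restrict back to $e'(Q)$ and use the constant term formula
\begin{equation*}
r^{*}_{Q}\bigl([E(\phi,\lambda_0)]\bigr)=\sum_{s\in W(A_Q,A_Q)}\bigl[c(s,\lambda_0)_{s\lambda_0}(\phi_{\lambda_0})\bigr]_{\vert e'(Q)}.
\end{equation*}
The term $s=1$ contributes $[\phi]$. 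Every other summand is a cuspidal class of weight $v_s(\rho)-\rho_{\vert\frak{a}_Q}$ for some $v_s\in W^Q$; since the cuspidal cohomology of the base $\Gamma_{M_Q}\setminus Z_{M_Q}$ of $e'(Q)$ is concentrated in degree $r$ while $\ell(w_Q)=\dim N_Q=\mathrm{cd}(\Gamma)-r$, such a summand can lie in degree $\mathrm{cd}(\Gamma)$ only when $v_s=w_Q$, and then the defining relation $v_s(\rho)-\rho_{\vert\frak{a}_Q}+s\lambda_0=0$ together with $\lambda_0=\rho_Q$ forces $s\rho_Q=2\rho_Q$, which is impossible for $s\neq1$ because $s$ is an isometry and $\rho_Q\neq0$. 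Hence those summands vanish in degree $\mathrm{cd}(\Gamma)$, so $r^{\mathrm{cd}(\Gamma)}_{Q}([E(\phi,\lambda_0)])=[\phi]$; in particular the Eisenstein map is injective, and $\dim_{\C}H^{\mathrm{cd}(\Gamma)}(\Gamma\setminus X;\C)_{e'(Q)}=\dim_{\C}H^{\mathrm{cd}(\Gamma)}_{\mathrm{cusp}}(e'(Q),\C)$.

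Finally I would check that this dimension is independent of the choice of $\sQ\in\{\sP\}/\Gamma$. With trivial coefficients $F_{\mu_{w_Q}}$ is the trivial ${}^{0}M_Q$-module, so $H^{\mathrm{cd}(\Gamma)}_{\mathrm{cusp}}(e'(Q),\C)\cong H^{r}_{\mathrm{cusp}}(\Gamma_{M_Q}\setminus Z_{M_Q},\C)$, and by Künneth this is a tensor product of $r$ copies of the first cuspidal cohomology of modular curves attached to the congruence subgroups of $SL_2$ induced on the size-$2$ blocks. For $\Gamma=\Gamma(m)$ these induced subgroups are, up to conjugacy, the same for every $\sQ$ in $\{\sP\}$ (Lemma \ref{Gamclasses}), so all the summands have the same dimension, equal to $\dim_{\C}H^{\mathrm{cd}(\Gamma)}_{\mathrm{cusp}}(e'(\sP),\C)$; this is positive for $m>5$ by Proposition \ref{nonzero}. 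Summing over the $\mathrm{conj}_{\sG}[\{\sP\}]\cdot\mathrm{conj}_{\Gamma}(\sP)$ terms yields the asserted dimension formula, and the non-triviality follows from the positivity just noted.

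The step I expect to be the main obstacle is the injectivity argument, i.e.\ ruling out all cancellations among the regular Eisenstein classes supported on a single face: this requires controlling every intertwining operator $c(s,\lambda_0)_{s\lambda_0}$ appearing in the constant term along $Q$ and checking that, precisely in degree $\mathrm{cd}(\Gamma)$, only the identity element of $W(A_Q,A_Q)$ survives — the norm comparison for $\rho_Q$ is the mechanism, but it has to be carried out for arbitrary block-$2$ configurations rather than only the special cases treated in \cite{S2}. A secondary, more technical point is to extract from Lemma \ref{Gamclasses} the statement that the arithmetic data induced on the Levi factors, and hence the relevant cuspidal cohomology in degree $\mathrm{cd}(\Gamma)$, is constant over the finitely many members of $\{\sP\}/\Gamma$.
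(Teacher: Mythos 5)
Your proposal is correct and follows essentially the same route as the paper: it rests on the direct sum decomposition (\ref{deass}) coming from Theorem \ref{Eisconstant}, counts the faces as $\mathrm{conj}_{\sG}[\{\sP\}]\cdot\mathrm{conj}_{\Gamma}(\sP)$, identifies each summand with $H^{\mathrm{cd}(\Gamma)}_{\mathrm{cusp}}(e'(\sP),\C)$, and invokes Proposition \ref{nonzero} for non-triviality. The extra detail you supply (injectivity of $[\phi]\mapsto[E(\phi,\lambda_0)]$ via the constant term, and the K\"unneth/Lemma~\ref{Gamclasses} argument for the constancy of the cuspidal dimension across the associate class) is exactly what the paper delegates to Theorem \ref{conEis} and to the inner automorphism $M_J\tilde{\longrightarrow}M_{J'}$, so the two arguments coincide in substance.
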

\begin{proof} Let $P_{J'}$ be a standard parabolic $\Q$-subgroup which belongs to the associate class $\{\sP\}$.  Then $\vert J' \vert = \vert J \vert$, and there is an inner automorphism of $\sG(\Q)$ which defines a $\Q$-rational isomorphism $M_{J} \tilde{\longrightarrow} M_{J'}$. This implies that the corresponding cuspidal cohomology spaces are isomorphic, that is, 
\begin{equation}
H^{\mathrm{cd}(\Gamma)}_{\mathrm{cusp}}(e'(P_{J}), \C) \cong 
H^{\mathrm{cd}(\Gamma)}_{\mathrm{cusp}}(e'(P_{J'}), \C).
\end{equation}
Since these spaces are non-zero by Proposition \ref{nonzero},  it follows that the corresponding Eisenstein cohomology  spaces $H^{\mathrm{cd}(\Gamma)}(\Gamma \setminus X; \C)_{e'(P_{J})}$ and $H^{\mathrm{cd}(\Gamma)}(\Gamma \setminus X; \C)_{e'(P_{J'})}$ are isomorphic and  non-trivial. Therefore the space $H^{\mathrm{cd}(\Gamma)}_{\{\sP\}}(\Gamma \setminus X; \C)$ is non-zero.
The formula for its  dimension follows from the previous argument and arranging the $\Gamma$-conjugacy classes of parabolic $\Q$-subgroups in the associate class $\{\sP\}$ according to the $\sG(\Q)$-cnjugacy class to which they belong.
\end{proof}

If one is interested to determine the size of a specific summand $H^{\mathrm{cd}(\Gamma)}_{\{\sP\}}(\Gamma \setminus X; \C)$ it is necessary to analyze the term $\mathrm{conj}_{\Gamma}(\sP)$ and the cuspidal cohomology $H^{\mathrm{cd}(\Gamma)}_{\mathrm{cusp}}(e'(\sP), \C)$. 

 Let $q = p^{\nu} > 2$ a prime power, and let $\Gamma(q) \subset SL_n(\Z)$ be the principal congruence subgroup of level $q$.
\begin{lemma}\label{Gamclasses}
Let $\sP$ be a parabolic $\Q$-subgroup of $\sG/\Q = SL_n/\Q$, $n \geq 3$, that is $\sG(\Q)$-conjugate to a  standard parabolic $\Q$-subgroup $\sP_J$ indexed by a  set $J = \{\alpha_{i_1}, \ldots, \alpha_{i_r} \} \subset \Delta_{\Q}$, $J \neq \emptyset$, subject to the conditions that if $\alpha_i, \alpha_j \in J$ then $\vert i - j \vert \geq 2.$ Then the number of $\Gamma(q)$-conjugacy classes of $\sP$ is
\begin{equation}
\mathrm{conj}_{\Gamma(q)}(\sP) = \frac{1}{2^{n-r-1}}\cdot \frac{\prod_{i=2}^{n}(q^i - 1)}{(q^2 - 1)^{r}}.
\end{equation}
\end{lemma}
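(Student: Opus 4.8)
The plan is to count $\Gamma(q)$-conjugacy classes inside the single $\sG(\Q)$-conjugacy class of $\sP_J$ by identifying this set with a double-coset space and then evaluating its cardinality via an orbit-stabilizer computation over the finite group $SL_n(\Z/q)$. First I would use that, since $\sP_J$ is defined over $\Q$ and self-normalizing (parabolic subgroups are their own normalizers), the $\sG(\Q)$-conjugacy class of $\sP_J$ is in bijection with $\sG(\Q)/\sP_J(\Q)$, and hence the $\Gamma(q)$-conjugacy classes in this $\sG(\Q)$-class are in bijection with the double cosets $\Gamma(q)\backslash \sG(\Q)/\sP_J(\Q)$. The key reduction is the standard strong-approximation / elementary-divisor argument for $SL_n$: because $SL_n(\Z)$ acts transitively on the $\sG(\Q)$-conjugacy class of $\sP_J$ (every parabolic is $SL_n(\Z)$-conjugate to a standard one, as it is the stabilizer of a flag of primitive sublattices which can be put in standard form by an integral change of basis), one gets $\Gamma(q)\backslash \sG(\Q)/\sP_J(\Q) \cong \Gamma(q)\backslash SL_n(\Z)/(\sP_J(\Q)\cap SL_n(\Z))$. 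Reducing mod $q$ and using that $\Gamma(q)$ is exactly the kernel of $SL_n(\Z)\to SL_n(\Z/q)$ with surjective reduction map, this becomes the finite double-coset space $SL_n(\Z/q)/\overline{\sP_J}$, where $\overline{\sP_J}$ is the image of $\sP_J(\Q)\cap SL_n(\Z)$ in $SL_n(\Z/q)$.

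Next I would compute the cardinality of this finite homogeneous space as $|SL_n(\Z/q)|/|\overline{\sP_J}|$. Using $q = p^\nu$ and the standard formulas $|SL_n(\Z/q)| = q^{(n^2-1) - \binom{n}{2}}\cdot\ldots$, or more cleanly the product formula $|GL_n(\Z/q)| = q^{n^2}\prod_{i=1}^{n}(1 - p^{-i})$ together with $|SL_n| = |GL_n|/|( \Z/q)^\times|$, one has $|SL_n(\Z/q)| = q^{n^2-1}\prod_{i=2}^{n}(1 - p^{-i}) = q^{\binom{n}{2}}\prod_{i=2}^{n}(q^i-1)$ after collecting powers of $q$ carefully. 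The stabilizer $\overline{\sP_J}$ is the reduction mod $q$ of the integral points of the standard parabolic with block sizes all $1$ or $2$ and with exactly $r$ blocks of size $2$; its order is the number of $(\Z/q)$-points of a parabolic whose Levi is (up to the determinant-one condition) a product of $r$ copies of $GL_2$ and $n-2r$ copies of $GL_1$, times the order of the unipotent radical. I would write $|\overline{\sP_J}| = q^{\dim N_J}\cdot\frac{\prod_{\text{blocks } B}|GL_{|B|}(\Z/q)|}{|(\Z/q)^\times|}$, and substitute $|GL_2(\Z/q)| = q^4\prod_{i=1}^{2}(1-p^{-i}) = q(q^2-1)(q-1)$ and $|GL_1(\Z/q)| = q-1$. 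The bookkeeping then collapses: the $(q-1)$ factors from the $GL_1$ blocks, from one removed determinant condition, and from the $GL_2$ blocks arrange so that the quotient $|SL_n(\Z/q)|/|\overline{\sP_J}|$ equals $\frac{1}{2^{n-r-1}}\cdot\frac{\prod_{i=2}^{n}(q^i-1)}{(q^2-1)^r}$, the extra $2$-power arising because the relevant integral Levi subgroups are cut out not in $GL$ but in $SL^{\pm}$-type subgroups (the blocks of size $1$ contribute entries $\pm 1$ rather than arbitrary units), which is precisely where the $2^{n-r-1}$ in the denominator comes from; this is consistent with the description of $^0M$ in the proof of Proposition \ref{nonzero}.

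The main obstacle I anticipate is the precise determination of the stabilizer group $\overline{\sP_J}$, i.e.\ getting the power of $2$ right. There are two subtleties: first, $\Gamma(q) \cap \sP_J$ need not surject onto the full mod-$q$ reduction of $\sP_J(\Z)$ unless one argues carefully (one needs that the reduction map $SL_n(\Z)\to SL_n(\Z/q)$ restricted to $\sP_J(\Z)$ is surjective onto $\overline{\sP_J}$, which follows from smoothness of $\sP_J$ and strong approximation, valid for $q>2$); second, the stabilizer of the standard flag inside $SL_n(\Z)$, when the flag has $1$-dimensional steps, involves diagonal $\pm 1$ entries because only $\pm 1$ are units in $\Z$ — so the "$GL_1$ blocks" contribute a factor $2$ each rather than $q-1$ each, and the global determinant-one constraint removes one such factor, yielding $2^{n-2r}\cdot(\text{from the }r\text{ blocks of size }2)$ adjusted to $2^{n-r-1}$ after combining with the $GL_2$-block count. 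I would double-check this $2$-power by testing the formula against the smallest case $n=3$, $J=\{\alpha_1\}$ (so $r=1$), where $\mathrm{conj}_{\Gamma(q)}(\sP)$ should equal $\frac{1}{2^{1}}\cdot\frac{(q^2-1)(q^3-1)}{q^2-1} = \frac{q^3-1}{2}$, and verifying directly that $SL_3(\Z/q)$ acting on the $\P^2(\Z/q)$-worth of lines (the flag here is just a line, since the complement $CJ = \{\alpha_2\}$ gives $V_2$, equivalently a $2$-plane, equivalently a line in the dual) has orbits counted by $\frac{(q^3-1)}{2}$ under $\Gamma(q)$-equivalence — the factor $\frac12$ again reflecting the $\pm 1$ ambiguity in primitive vectors. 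Once this base case and the general $2$-power match, the rest is the routine arithmetic of collecting $q$-powers and $(q^i-1)$-factors indicated above.
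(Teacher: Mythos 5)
Your proposal is correct and takes essentially the same route as the paper: your double-coset count $\Gamma(q)\backslash SL_n(\Z)/(SL_n(\Z)\cap P_J)\cong SL_n(\Z/q)/\overline{\sP_J}$ is exactly the paper's index formula $\mathrm{conj}_{\Gamma(q)}(\sP)=\vert SL_n(\Z)/\Gamma(q)\vert\cdot\vert(SL_n(\Z)\cap P_J)/\Gamma(q)_{P_J}\vert^{-1}$, and your observation that the integral diagonal blocks contribute only determinants $\pm 1$ subject to one global relation is precisely the paper's surjection $\omega$ onto $(\pm 1)^{n-r-1}$ that produces the factor $2^{n-r-1}$. The only caution is that your first displayed expression for $\vert\overline{\sP_J}\vert$ (with $\prod_B \vert GL_{\vert B\vert}(\Z/q)\vert/\vert(\Z/q)^{\times}\vert$) would compute $\vert\sP_J(\Z/q)\vert$ rather than the image of $\sP_J(\Z)$, but you identify and repair this yourself, and your $n=3$ sanity check confirms the corrected count.
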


\begin{proof}By definition, the principal congruence subgroup $\Gamma(q)$ is the kernel of the homomorphism $SL_n(\Z) \longrightarrow SL_n(\Z/q\Z)$ given by taking the entries of a matrix $\mod q$. Since this homomorphism is surjective, we have $SL_n(\Z)/\Gamma(q) \cong SL_n(F_q)$ where $F_q = \Z/q\Z$ is the finite field with $q$ elements.

We may assume that $\sP = \sP_J$ where $J = \{\alpha_{i_1}, \ldots, \alpha_{i_r} \} \subset \Delta_{\Q}$, $J \neq \emptyset$ subject to the conditions that if $\alpha_i, \alpha_j \in J$ then $\vert i - j \vert \geq 2.$. Then we have the Levi decomposition $\sP_J = \sL_J \sN_J$, and, accordingly for the group of real points $^0P_J =\; ^0M_J N_J$. Since $\Gamma = \Gamma(q)$ is fixed, we may write
\begin{equation}
\Gamma(q)_{P_J} = \Gamma \cap P_J, \quad \Gamma(q)_{M_J} = \Gamma \cap M_J, \quad \Gamma(q)_{N_J} = \Gamma \cap N_J.
\end{equation}
We note that $\Gamma_{P_J} \subset \;^0P_J =\; ^0M_J N_J$. By the strong approximation property the group $SL_n/\Q$ has the $SL_n(\Z)$-conjugacy class of $\sP_J$ coincides with the $SL_n(\Q)$-conjugacy class of $\sP_J$. Moreover, the normalizer of  $\sP_J$ in $SL_n(\Q)$ is the group $\sP_J$ itself. Therefore, 
\begin{equation}\label{formulaGam}
\mathrm{conj}_{\Gamma(q)}(\sP) = \vert SL_n(\Z)/\Gamma(q) \vert \cdot \vert (SL_n(\Z) \cap {P_J}) / \Gamma(q)_{P_J} \vert^{-1}.
\end{equation}

With regard to the first factor,  the cardinality of the special linear group $SL_n(\Z)/\Gamma(q) \cong SL_n(F_q)$ over the finite field $F_q$ is 
\begin{equation}
\vert SL_n(F_q) \vert = q^{n(n-1)/2} \prod_{i = 2}^{n} (q^i - 1). 
\end{equation}

In order to determine the second factor in formula (\ref{formulaGam}), we may assume, without loss of generality, that $\sP_J$ has the index set $J = \{\alpha_{1}, \alpha_{2 + 1}, \dots,  \alpha_{r +(r-1)} \}$. Then the Levi component of $P_J$  has the form
\begin{equation}
\sL_J = \{\textrm{diag}(A_1, A_3, \dots, A_{2r-1}, a_{2r+1}, \ldots,  a_{n}) \in SL_n(\Q)\; \vert \; A_i \in GL_2(\Q) \; \text{and}\; a_j \in GL_1(\Q)\}. 
\end{equation}
Taking the determinant of each of the components of such a ``diagonal matrix" induces a homomorphism
\begin{equation}
\omega: SL_n(\Z) \cap M_J \longrightarrow (\pm 1)^{n - r - 1},
\end{equation}
formally given by the assignment
\begin{equation*}
 \textrm{diag}(A_1, A_3, \dots, A_{2r-1}, a_{2r+1}, \ldots,  a_{n}) \mapsto (\det A_1, \ldots, \det A_{2r-1}, \det a_{2r+1}, \ldots, \det a_{n-1}).
\end{equation*}
Evidently, the homomorphism $\omega$ is surjective.
Recall  that $SL_n(\Z) \cap P_J \subset \;^0P_J =\; ^0M_J N_J$, and $\Gamma(q) \cap P_J \subset \;^0P_J =\; ^0M_J N_J$. Therefore the  kernel of $\omega$ is
\begin{equation}
\ker \omega = \{\textrm{diag}(A_1, A_3, \dots, A_{2r-1}, a_{2r+1}, \ldots,  a_{n}) \in SL_n(\Z)\; \vert \; A_i \in SL_2(\Z) \; \text{and}\; a_j = 1\}. 
\end{equation}
 Since $q > 2$,  an element $ \textrm{diag}(A_1, A_3, \dots, A_{2r-1}, a_{2r+1}, \ldots,  a_{n}) \in \Gamma(q) \cap P_J$  satisfies $\det A_i = 1$ and $a_j = 1$, thus, it  
 lies in the kernel of $\omega$. It follows that 
\begin{equation}
 \vert (SL_n(\Z) \cap {P_J}) / \Gamma(q)_{P_J} \vert = 2^{n-r-1}\cdot [q (q^2 - 1)]^r \cdot q^{\frac{n(n-1)}{2} - r},
\end{equation}
where $[q (q^2 - 1)] = \vert SL_2(F_q) \vert$, and the last factor is the contribution from the ``unipotent" part $(SL_n(\Z) \cap N_J)/ \Gamma(q)_{N_J}$.
\end{proof}

Ultimately, the non-vanishing Eisenstein cohomology classes in $H^{\mathrm{cd}(\Gamma)}_{\{\sP\}}(\Gamma \setminus X; \C)$ originate from the cuspidal automorphic forms in the space \begin{equation}
H^{\mathrm{cd}(\Gamma)}_{\mathrm{cusp}}(e'(P_{J}), \C) = H^{\mathrm{cd}(\Gamma) - \ell(w_{P_{J}})}_{\mathrm{cusp}}(\Gamma_M \setminus X_M, F_{\mu_{w_{P_{J}}}}) \neq \{0\},
\end{equation}
that is, from elements in the space  $S_2(\Delta(m))$ of cuspidal forms of weight $2$ with respect to  the principal congruence subgroup $\Delta(m)$  of level $m$ in $SL_2$. Since the dimension of this space is known, one can derive, if needed,  quite explicit formulas for the size of $H^{\mathrm{cd}(\Gamma)}_{\{\sP\}}(\Gamma \setminus X; \C)$.

\begin{example}\label{cusptwo} Let $n \geq 3$, and let $\Gamma(q) \subset SL_n(\Q)$, with $q = p^{\nu} > 5$, $p$ a prime.  Take $J = \{\alpha_j\}$ for any simple root in $\Delta_{\Q}$. The Levi subgroup of the corresponding parabolic $\Q$-subgroup $\sP_{J}$ is given as
\begin{equation}
\sL_{J} = \{\textrm{diag}(a_1, \ldots, a_{j-1}, A, a_{j+2}, \ldots, a_n) \in SL_n(\Q) \; \vert \;  A \in GL_2(\Q), a_i \in GL_1(\Q) \}.
\end{equation}
Then we have $H^{\mathrm{cd}(\Gamma(q))}_{\mathrm{cusp}}(e'(P_{J}), \C) \cong H^1_{\mathrm{cusp}}(\Delta(q), \C) \cong S_2(\Delta(q)) \oplus \overline{S_2(\Delta(q))}$. Using Hecke's work \cite{Hecke} the dimension of the latter space is given by
\begin{equation}
\dim_{\C}H^1_{\mathrm{cusp}}(\Delta(q), \C) =  2 g_{\Delta(q)} 
\end{equation}
where $g_{\Delta(q)}$ denotes the  genus of the compact Riemann surface attached  to the group $\Delta(q)$ by adding the cusps to the surface at infinity. The genus is given by the formula
\begin{equation}
g_{\Delta(q)} = 1 + \frac{\mu_q}{12} - (1/2)\cdot [\frac{q^2 - 1}{2}]
\end{equation}
where $\mu_q = (1/2)q(q^2 - 1)$ is the index of $\Delta(q)$ in $SL_2(\Z)/\{\pm 1\}$, and 
the expression in the square brackets is the number of $\Delta(q)$-conjugacy classes of Borel subgroups in $SL_2(\Q)$.
Observe that  $\mathrm{conj}_{\sG}[\{\sP_J\}] = n - 1$. Thus, we are reduced to determine the number  $\mathrm{conj}_{\Gamma(q)}(\sP_J)$ of $\Gamma(p)$-conjugacy classes of parabolic $\Q$-subgroups within the $\sG(\Q)$-conjugacy class of $\sP_J$.
By Lemma \ref{Gamclasses} we have $\mathrm{conj}_{\Gamma(q)}(\sP) = \frac{1}{2^{n-2}}\cdot \prod_{i = 3}^n(q^i - 1)$. Taking the different terms together we obtain
\begin{equation}
\dim_{\C} H^{\mathrm{cd}(\Gamma)}_{\{\sP_J\}}(\Gamma(q) \setminus X; \C) = (n - 1)\cdot \frac{1}{2^{n-2}}\cdot \prod_{i = 3}^n(q^i - 1) \cdot g_{\Delta(q)}.
\end{equation}
where $J = \{\alpha_j\}$ for any simple root in $\Delta_{\Q}$.
\end{example}

\subsection{Mixed cases}\label{Mixed}
We now turn our attention to those cases of parabolic $\Q$-subgroups of $SL_n$ that are in case (II) but not yet covered in the preceding investigation. Recall that the standard parabolic $\Q$-subgroups $\sP_J$ in case (II) are characterized by the subsets $J \subset \Delta_{\Q}$ subject to the condition that if $\alpha_i, \alpha_{i+1} \in J$, with $i \in \{1, \dots, i+2\}$, then $\alpha_{i+2} \notin J$. Previously, in our treatment, we did not discuss how we handle   the occurrence  of blocks of size three, i.e. $GL_3$,  in the Levi component $\L_J$. 
Though one can expect a result analogous to Theorem \ref{main},  exact formulas for the dimension of the cuspidal cohomology $H^{\mathrm{cd}(\Gamma)}_{(\mathrm{cusp})}(e'(P_J), \C)$, for example, in the case of a principal congruence subgroup,  are currently not known. However, the existence of cuspidal cohomology classes for congruence subgroups of sufficiently high level of $SL_3/\Q$ or $GL_3/\Q$ is a consequence of the general results in \cite[Section 3]{LabSchw}, \cite{LabSchwJNT} or \cite[Section 5]{S10}.  

The situation is slightly better in the following case where one can give a lower bound for the dimension in question. Let $\Gamma(p) \subset SL_3(\Q)$ be the principal congruence subgroup of level $p$ where  $p \equiv  3 \mod 8$ and $p \equiv  - 1 \mod 3$. Then, by the main theorem in \cite{LeeS2},
\begin{equation}
\dim_{\C} H^{3}_{\mathrm{cusp}}(\Gamma(p); \C) \geq p (p + 1) \quad \text{where} \quad 3 = \mathrm{cd}(\Gamma(p)).
\end{equation}
It is clear that, following the lines of arguments as given in the case of block-2 parabolic $\Q$-subgroups, a result analogous to Theorem \ref{main} can be obtained. We leave details to the interested reader. 

\begin{example}Let $n \geq 4$, and let $\Gamma \subset SL_n(\Q)$ be a torsion-free congruence subgroup. Take $J = \{\alpha_j, \alpha_{j+1}\}$ for any simple root in $\Delta_{\Q}$. The Levi subgroup of the corresponding parabolic $\Q$-subgroup $\sP_J$ is given as 
\begin{equation}
\sL_{J} = \{\textrm{diag}(a_1, \ldots, a_{j-1}, A, a_{j+3}, \ldots, a_n) \in SL_n(\Q) \; \vert \;  A \in GL_3(\Q), a_i \in GL_1(\Q) \}.
\end{equation}
Then there exists a space 
$H^{\mathrm{cd}(\Gamma)}_{\{\sP_J\}}(\Gamma \setminus X; \C) = \bigoplus_{\sQ \in \{\sP_J\}/\Gamma}  H^{\mathrm{cd}(\Gamma)}(\Gamma \setminus X; \C)_{e'(Q)}$
which is generated by regular Eisenstein cohomology classes  and non-trivial for congruence subgroups of sufficiently high level. Its dimension is given by 
\begin{equation}
\dim_{\C} H^{\mathrm{cd}(\Gamma)}_{\{\sP_J\}}(\Gamma \setminus X; \C) = \mathrm{conj}_{\sG}[\{\sP_J\}]
 \cdot \mathrm{conj}_{\Gamma}(\sP_J) \cdot \dim_{\C}H^{\mathrm{cd}(\Gamma)}_{\mathrm{cusp}}(e'(P_J), \C)
\end{equation}
where $\mathrm{conj}_{\sG}[\{\sP\}] = n-1$.

\end{example}

\section{Construction of Eisenstein cohomology classes -  the  case $(\mathrm{I})$}\label{minpar}

For the sake of completeness we briefly review the results obtained in \cite{S1}, \cite{S1CR} which concern  the subspace $H^{\textrm{cd}(\Gamma)}_{\{\sP_0\}}(\Gamma \setminus X, \C)$ of Eisenstein cohomology classes which originate from  the cohomology of the faces $e'(Q)$, $\sQ \in \{\sP_0\}$, in $\partial(\Gamma \setminus \overline{X})$. Their construction relies on the theory of Eisenstein series in the framework of the adele group attached to the underlying algebraic group $\sG/\Q$; a detailed investigation of certain local and global questions regarding the constant terms of the Eisenstein series is decisive.

\subsection{Cohomology at infinity} We identify the set of non-archimedean places $V_{\Q, f}$ with the set of primes in $\Q$. The group
$\sG = SL_n$ has the strong approximation property (with respect to the set of archimedean places), that is, $\sG(\Q)\sG(\R)$ is dense in $\sG(\A)$, or, equivalently, $\sG(\Q)$ is dense in $\sG(\A_f)$. Let $K = \prod_{v \in V_{\Q, f}} K_v$ be an open compact subgroup of $\sG(\A_f)$. Then we have
\begin{equation}\label{strongapprox}
\sG(\A) = \sG(\Q)\cdot (\sG(\R) \times K).
\end{equation}
Given the open compact subgroup $K \subset \sG(\A_f)$, we define $\Gamma: = \sG(\Q) \cap K$, that is, 
\begin{equation}\label{defGamma}
\Gamma = \{ \gamma \in \sG(\Q)\; \vert \; \gamma \in K_v \; \text{for each} \; v \in V_{\Q, f} \}.
\end{equation}
Then $\Gamma$ is an arithmetic subgroup of $\sG(\Q)$, and we have, using (\ref{strongapprox}) and bringing the maximal compact subgroup $K_{\R}: = SO(n)$ of $\sG(\R)$ into play,
\begin{equation}
\Gamma \setminus \sG(\R) / K_{\R} = \sG(\Q) \setminus \sG(\A) / (K_{\R} \cdot K).
\end{equation}

In particular, the group $L(1): =  \prod_{p \in \Q} SL_n(\Z_p)$ is a maximal open compact subgroup of $\sG(\A_f)$, and, 
given a natural number $m$, the group 
\begin{equation}
L(m): = \{l \in  \prod_{p \in \Q} SL_n(\Z_p)\; \vert \; l \equiv 1 \mod m \}
\end{equation}
is an open compact subgroup of $\sG(\A_f)$. Then the arithmetic group $\Gamma$ defined in (\ref{defGamma}) is the principal congruence subgroup $\Gamma(m)$ of  level $m$ of $SL_n(\Z)$. It is torsion-free for $m \geq 3$. Note that the group $L(m)$ is a normal subgroup of $L(1)$ of finite index.

Let $\sP_0 \subset SL_n/\Q$ be  the  minimal
parabolic $\mathbb{Q}$-subgroup of upper triangular matrices, and $\sP_0 = \sL_0 \sN_0$ its Levi
decomposition where $\sN_0$ is the unipotent radical. The maximal
split torus $\sL_0$ is given as the group $\{\diag(t_1, \ldots, t_n) =:t \in SL_n(\Q)\}$ of diagonal matrices with determinant one. To simplify notation we write $\sT: = \sL_0$ for the maximal $\Q$-split torus $\sL_0$. Note that the associate class $\{\sP_0\}$ consists of all minimal parabolic $\Q$-subgroups of $\sG$. 

It is necessary to deal with all the finitely $\Gamma(m)$-conjugacy classes of minimal parabolic $\Q$-subgroups of $\sG$ at the same time. The face $e'(P_0)$ will be our point  of reference in our adelic description of the cohomology at infinity that depends on the associate class $\{\sP_0\}$.
With regard to the $\sT(\R)$-module structure, the cohomology of the face $e'(P_0)$ decomposes as 
\begin{equation}\label{Nzero}
H^\ast(\mathfrak{n}_0,\C)=\oplus_{w\in W^{P_0}} F_{\mu_w}
\end{equation}
where the sum ranges over the elements $w$ in the Weyl group  $W^{P_0} = W$, and $F_{\mu_w}$
denotes the irreducible finite-dimensional
$\sT(\C)$-module of highest weight $\mu_w = w(\rho_{P_0}) - \rho_{P_0}.$ We choose a harmonic differential form $\phi_w \in \Omega^{\ell(w)}(e'(P_0), \C)$ such that 
$F_{\mu_w} = \C[\phi_w]$, that is, the form $\phi_w$ represents a class of weight $\mu_w$. We denote by $\eta_w:\;  ^0\sT(\R) \longrightarrow \C^{\times}$ the character obtained by restricting $\mu_w$ to $^0\sT(\R)$.  The character $\eta_w, w \in W$, is a sign character. Given the longest element $w_0 \in W$, $\eta_{w_0}$ is the trivial character.

 Given a fixed natural number $m$, we consider the space of double cosets
 \begin{equation}
 \sT(\Q) \setminus \sT(\A)/ \sT(\R)^0 \cdot (\sT(\A_f) \cap L(m)) =\; ^0\sT(\R) \times \sT(\Q) \setminus \sT(\A_f)/ (\sT(\A_f) \cap L(m)).
 \end{equation}
The second factor on the right hand side can be rewritten as
\begin{equation}
\sT(\Q) \setminus \sT(\A_f)/ (\sT(\A_f) \cap L(m)) = \sT(\Z) \setminus (\sT(\A_f) \cap L(1))/ (\sT(\A_f) \cap L(m)) \cong [(\pm 1)\setminus (\Z/m\Z)^{\times}]^{n-1}.
\end{equation}
Consequently, a character on $\sT(\Q) \setminus \sT(\A_f)/ (\sT(\A_f) \cap L(m))$ is uniquely determined by the values it takes on the group $\mathcal{T}_m: = \sT(\A_f) \cap L(1)/\sT(\A_f) \cap L(m)$. 

Let $\mathcal{L}_m$ denote the finite  group $L(1)/L(m)$. The space $\C[\mathcal{L}_m]$ of $\C$-valued functions on $\mathcal{L}_m$ is a $\mathcal{L}_m$-bimodule in a natural way. The group $\mathcal{U}_m: = L(m)\setminus L(m)
(^0\sT(\Q) \cap L(1))\cdot (\sN_0(\A_f) \cap L(1))$ acts via right translation on $\C[\mathcal{L}_m]$. The subspace of elements in $\C[\mathcal{L}_m]$ which are invariant under this action can be identified with $\C[\mathcal{L}_m/\mathcal{U}_m]$. Since $\sT$ normalizes the unipotent radical $\sN_0$, there is a natural action of $\mathcal{T}_m$ on $\C[\mathcal{L}_m/\mathcal{U}_m]$. Note that the cosets which are represented by an element in $^0\sT(\Q) \cap L(1)$ act trivially. With regard to this action of $\mathcal{T}_m$ there is a decomposition $\C[\mathcal{L}_m/\mathcal{U}_m] = \oplus V_{\chi}$ into eigen spaces $V_{\chi}$, where $\chi: \mathcal{T}_m \longrightarrow \C^*$ ranges over the characters trivial on $^0\sT(\Q) \cap L(1)$.
As discussed in the previous paragraph we can interpret the character $\chi$ as a character on  $\sT(\Q) \setminus \sT(\A_f)/ (\sT(\A_f) \cap L(m))$, to be denoted by the same letter.
Finally, by trivial extension, we obtain a quasi character \begin{equation}
\chi: \sT(\Q) \setminus \sT(\A)/ (\sT(\A_f) \cap L(m)) \longrightarrow \C^*.
\end{equation}
 In particular, given $t_{\infty} \in \sT(\R)$, $\chi(t_{\infty}) = 1$.

 Next we have to merge  this approach with  the description of $H^*(e'(P_0), \C)$ as given in the decomposition \ref{Nzero}. The starting point is the following result [see Satz 5.7 in \cite{S1}]
 
  \begin{proposition} Let $D$ be a set of representatives  for the finitely many $\Gamma(m)$-conjugacy classes of minimal parabolic $\Q$-subgroups of $\sG$. Then there is an isomorphism  
 \begin{equation}
 \oplus_{\sQ \in D} H^*(e'(Q), \C) \tilde{\longrightarrow} H^*(e'(P_0), \C) \otimes \C[\mathcal{L}_m/\mathcal{U}_m].
  \end{equation}
  \end{proposition}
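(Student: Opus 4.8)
The plan is to evaluate the left–hand side adelically, exploiting the strong approximation property of $\sG=SL_n$, and to recognise $e'(P_0)$ as the standard cell in the part of the Borel--Serre boundary lying over the associate class $\{\sP_0\}$; the finitely many remaining faces are then governed by a finite adelic double–coset space which one identifies with $\mathcal{L}_m/\mathcal{U}_m$, and this is where the genuine content of the statement sits.

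First I would pin down the index set. Since every minimal parabolic $\Q$-subgroup of $\sG$ is $\sG(\Q)$-conjugate to $\sP_0$ and its normaliser in $\sG(\Q)$ is $\sP_0(\Q)$, the set $D$ of $\Gamma(m)$-conjugacy classes of such subgroups is canonically the double–coset space $\Gamma(m)\backslash\sG(\Q)/\sP_0(\Q)$ for the conjugation action. Passing to the finite adeles, density of $\sG(\Q)$ and openness of $L(m)$ give $\sG(\Q)\cdot L(m)=\sG(\A_f)$ together with $\Gamma(m)=\sG(\Q)\cap L(m)$, and a routine argument (the orbit $L(m)\gamma\sP_0(\Q)$ is open, hence closed) produces a natural bijection
\[
 D \;=\; \Gamma(m)\backslash\sG(\Q)/\sP_0(\Q)\;\xrightarrow{\ \sim\ }\; L(m)\backslash\sG(\A_f)/\overline{\sP_0(\Q)},
\]
where $\overline{\sP_0(\Q)}=\sT(\Q)\,\sN_0(\A_f)$ is the closure of $\sP_0(\Q)$ in $\sG(\A_f)$: indeed $\sN_0(\Q)$ is dense in $\sN_0(\A_f)$ by strong approximation for the unipotent radical, while $\sT(\Q)$ is discrete, hence closed, in $\sT(\A_f)$. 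Using the Iwasawa decomposition $\sG(\Q_p)=SL_n(\Z_p)\,\sP_0(\Q_p)$ at every finite place together with $\sT(\A_f)=\sT(\Q)\,\sT(\widehat{\Z})$ -- which holds because $\Q$ has class number one -- one gets $\sG(\A_f)=L(1)\,\overline{\sP_0(\Q)}$, so the double–coset space above collapses to $L(m)\backslash L(1)/\bigl(L(1)\cap\overline{\sP_0(\Q)}\bigr)$. Finally, uniqueness of the Levi decomposition gives $L(1)\cap\overline{\sP_0(\Q)}=({}^0\sT(\Q)\cap L(1))\cdot(\sN_0(\A_f)\cap L(1))={}^0\sT(\Z)\ltimes\sN_0(\widehat{\Z})$, so this is exactly $\mathcal{L}_m/\mathcal{U}_m$, compatibly with the residual right $\mathcal{L}_m$-action appearing in the statement.

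Second I would identify each summand $H^*(e'(Q),\C)$ with $H^*(e'(P_0),\C)$. For $m\geq 3$ the Levi stabiliser $\Gamma(m)_M$ of a minimal parabolic is trivial, since a diagonal $\pm1$-matrix congruent to $1$ modulo $m$ is the identity; hence the base of the fibration \eqref{fibration} is a point and $e'(Q)$ is, cohomologically, the compact nilmanifold $\Gamma_{N_Q}\backslash N_Q$. By the identification of the cohomology of a nilmanifold with the cohomology of its Lie algebra its complex cohomology is $H^*(\mathfrak{n}_Q,\C)$, and choosing $g_Q\in\sG(\Q)$ with $\sQ=g_Q\sP_0g_Q^{-1}$ the map $\mathrm{Ad}(g_Q)$ identifies $\mathfrak{n}_Q$ with $\mathfrak{n}_0$, so $H^*(e'(Q),\C)\cong H^*(\mathfrak{n}_0,\C)=H^*(e'(P_0),\C)$, the latter being the Kostant decomposition \eqref{Nzero}. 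Tensoring the constant space $H^*(e'(P_0),\C)$ with the free vector space on $D\cong\mathcal{L}_m/\mathcal{U}_m$ then yields the asserted isomorphism
\[
 \bigoplus_{\sQ\in D}H^*(e'(Q),\C)\;\xrightarrow{\ \sim\ }\;H^*(e'(P_0),\C)\otimes\C[\mathcal{L}_m/\mathcal{U}_m].
\]

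The main obstacle is not a single computation but the requirement that the construction be genuinely canonical: one must choose the $g_Q$ coherently and check that the resulting map intertwines the natural $\mathcal{L}_m$- and $\mathcal{T}_m$-actions, so that the eigenspace decomposition $\C[\mathcal{L}_m/\mathcal{U}_m]=\oplus V_\chi$ used later is the intended one. Here the sign characters $\eta_w$ of \eqref{Nzero} must be tracked, since $\mathrm{Ad}(g_Q)$ determines the identification of $H^*(\mathfrak{n}_Q,\C)$ with $H^*(\mathfrak{n}_0,\C)$ only up to the action of $\sP_0(\Q)$ through these signs. The second delicate ingredient is the equality $\sT(\A_f)=\sT(\Q)\,\sT(\widehat{\Z})$: it is precisely what forces the torus to contribute only the finite group ${}^0\sT(\Z)$ to $\mathcal{U}_m$, and it marks the point at which an extension to a number field of class number greater than one would require modification.
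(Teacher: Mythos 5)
Your argument is correct and follows precisely the adelic framework the paper sets up in Section \ref{minpar} (the paper itself offers no proof, referring instead to Satz 5.7 of \cite{S1}): the identification $D\cong\mathcal{L}_m/\mathcal{U}_m$ via strong approximation, the Iwasawa decomposition and class number one for $\Q$, combined with the observation that each face $e'(Q)$ with $\sQ$ minimal is a compact nilmanifold whose complex cohomology is $H^*(\mathfrak{n}_0,\C)$ independently of the lattice $\Gamma(m)\cap N_Q$, is exactly what is needed. You also rightly single out the one genuinely delicate point, namely arranging the choices of $g_Q$ so that the isomorphism is equivariant for the $\mathcal{T}_m$-action and the sign characters $\eta_w$ that are exploited afterwards.
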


Therefore, in view of the decomposition $H^*(e'(P_0), \C) \cong \oplus_{w \in W} F_{\mu_w}$, a cohomology class in $ \oplus_{\sQ \in D} H^*(e'(Q), \C)$ may be characterized by an element $w \in W$ and a  character on $\mathcal{T}_m$, trivially extended to a quasi character $\chi$ on $ \sT(\Q) \setminus \sT(\A)/ (\sT(\A_f) \cap L(m))$. The latter character has to be compatible with a given $w \in W$ in the following sense:
Let $\hat{\sT}_{\A}$ be the set of characters $\chi:  \sT(\Q) \setminus \sT(\A)/ \sT(\R)^0 \cdot (\sT(\A_f) \cap L(m)) \longrightarrow \C^{\times}$.
Given the  sign character $\eta_w:\;  ^0\sT(\R) \longrightarrow \C^{\times}$, indexed by $w \in W$, the character $\chi$ has to be an element of the set $\hat{\sT}_{\A}(\eta_w)$ of characters  in $\hat{\sT}_{\A}$ whose restriction to $^0\sT(\R)$ coincides with $\eta_w$.

\subsection{Eisenstein cohomology classes}
The torus $\sT \subset \sG$ can be written as a direct product of tori $\sT_i$, $i = 1, \ldots, n-1$, and accordingly we write  $\chi = (\chi_1, \ldots, \chi_{n-1})$ for the quasi character $\chi \in \hat{\sT}_{\A}$ If $\chi_i \neq 1$ for all $i = 1, \ldots, n-1$, then, due to the specific choice of $\chi$ in $\hat{\sT}_{\A}$, the quasi character $\chi_i$ on $\A^{\times}/\Q^{\times}$ is not principal, that is, $\chi_i$ is non-trivial on the maximal compact subgroup of the ideles of norm one modulo $\Q^{\times}$. The following result is Theorem 9.1. in \cite{S1}.

\begin{theorem}\label{Eisminimal} Let  $\chi = (\chi_1, \ldots, \chi_{n-1}) \in \hat{\sT}_{\A}$ be a quasi character such that $\chi$ is compatible with the trivial sign character $\eta_{w_0}$ which corresponds to the longest element $w_0 \in W$. Suppose that each component $\chi_i$, $i = 1, \ldots, n-1$, in non-trivial. Then there is a corresponding Eisenstein series $E(\phi_{w_0}, \chi, \lambda)$ attached to the pair $(\phi_{w_0}, \chi)$ which is holomorphic at the point $\lambda_0 = \rho_{P_0}$ and  gives rise to a non-trivial cohomology class in 
$H^{\mathrm{cd}(\Gamma)}(\Gamma \setminus X, \C)$. This class is represented by the closed harmonic differential form given by the value the Eisenstein series 
$E(\phi_{w_0}, \chi, \lambda)$  takes at the point 
$\lambda_0$. The restriction of the class $[E(\phi_{w_0}, \chi, \lambda_0)]$ under the natural restriction map 
\begin{equation}
r^{\mathrm{cd}(\Gamma}_{\{\sP_0\}}: H^{\mathrm{cd}(\Gamma)}(\Gamma \setminus X, \C) \longrightarrow \oplus_{\sQ \in D} H^*(e'(Q), \C) \tilde{\longrightarrow} H^*(e'(P_0), \C) \otimes \C[\mathcal{L}_m/\mathcal{U}_m]
  \end{equation}
  is the class started with.
\end{theorem}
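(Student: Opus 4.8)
Since the coefficient system is the trivial one, $\Lambda = 0$; the longest element $w_0\in W = W^{P_0}$ has length $\ell(w_0) = \dim N_0 = \mathrm{cd}(\Gamma)$ and satisfies $w_0(\rho_{P_0}) = -\rho_{P_0}$, so the point prescribed by Theorem \ref{conEis} is $\lambda_0 = -w_0(\rho)_{\vert\frak{a}_{P_0}} = \rho_{P_0}$, in accordance with the statement. Throughout, $E(\phi_{w_0},\chi,\lambda)$ is the adelic vector-valued Eisenstein series on $\sG(\Q)\backslash\sG(\A)$ induced from the quasi-character $\chi\otimes\vert\cdot\vert^{\lambda}$ of $\sT(\A)$, whose archimedean component is the cohomological section representing the one-dimensional Kostant summand $F_{\mu_{w_0}} = H^{\mathrm{cd}(\Gamma)}(\frak{n}_0,\C) = \C[\phi_{w_0}]$. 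By the adelic form of Theorem \ref{conEis}, once $E(\phi_{w_0},\chi,\lambda)$ is known to be holomorphic at $\lambda_0$, its value there is automatically a closed harmonic $\Gamma$-invariant $\C$-valued form of degree $\mathrm{cd}(\Gamma)$ representing a non-trivial class; hence the proof reduces to two tasks: the holomorphy at $\lambda_0$, and the identification of the image of the resulting class under $r^{\mathrm{cd}(\Gamma)}_{\{\sP_0\}}$.

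For the holomorphy, the argument used in Theorem \ref{Eisconstruct} is unavailable: there $\lambda_0$ lay strictly inside the dual cone, so $(\lambda_0,\alpha) > (\rho_{P_0},\alpha)$, and Wallach's unitarity criterion forbade a pole; here $\lambda_0 = \rho_{P_0}$ lies exactly on the boundary of the cone of absolute convergence, $(\lambda_0,\alpha) = (\rho_{P_0},\alpha)$ for all $\alpha\in\Delta(P_0,A_0)$, and the Langlands quotient attached to $(\sP_0,\chi,\rho_{P_0})$ is a unitary character, so that criterion gives no information. Instead the plan is to use the fact that, in a neighbourhood of $\lambda_0$, the poles of $E(\,\cdot\,,\lambda)$ are among those of its constant term along $\sP_0$, together with the Gindikin--Karpelevich expression of that constant term as $\sum_{w\in W} M(w,\lambda)$ applied to the inducing section. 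Each standard intertwining operator factors as $M(w,\lambda) = M_\infty(w,\lambda)\otimes\bigotimes_{p} M_p(w,\lambda)$, a product of rank-one operators indexed by the positive roots $\beta$ with $w\beta < 0$. The archimedean factors $M_\infty(w,\lambda)$ are holomorphic at $\lambda_0 = \rho_{P_0}$ because $\langle\rho_{P_0},\beta^{\vee}\rangle = \mathrm{ht}(\beta)\geq 1 > 0$ for every positive root $\beta$, so $\lambda_0$ lies strictly in the chamber of holomorphy of the archimedean principal-series intertwining operators (uniformly in the $K_{\infty}$-type carried by $\phi_{w_0}$); this local analysis is the one carried out in \cite{S1}. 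The non-archimedean part of the $\beta$-factor is the ratio $L(\langle\lambda,\beta^{\vee}\rangle,\chi_\beta)/L(\langle\lambda,\beta^{\vee}\rangle+1,\chi_\beta)$ of finite-part Hecke $L$-functions, where $\chi_\beta$ is the product of the components $\chi_i$ whose simple root occurs in $\beta$; since $\chi$ is trivial on $\sT(\R)$, each $\chi_\beta$ is a non-trivial finite-order (Dirichlet) character. For $\mathrm{ht}(\beta)\geq 2$ this ratio lies in the region of absolute convergence and is finite and non-zero; for $\beta = \alpha_i$ simple it is $L(1,\chi_i)/L(2,\chi_i)$, which is finite and non-zero because $\chi_i\neq 1$ makes $L(s,\chi_i)$ entire and $L(1,\chi_i)\neq 0$ by Dirichlet's non-vanishing theorem. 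Hence every $M(w,\lambda)$, $w\in W$, is holomorphic at $\lambda_0$, and so is $E(\phi_{w_0},\chi,\lambda)$. This is precisely where the hypothesis that each $\chi_i$ be non-trivial enters: were some $\chi_i$ trivial, the numerator $L(1,\chi_i)$ would have a pole and one would be forced to pass to a residue, i.e. into the residual rather than the regular part of the spectrum.

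For the restriction, fix a set $D$ of representatives of the $\Gamma$-conjugacy classes of minimal parabolic $\Q$-subgroups (all of which lie in the single associate class $\{\sP_0\}$ and are $\sG(\Q)$-conjugate). For each $\sQ = \sP_0^{g}\in D$, the class $r^*_Q([E(\phi_{w_0},\chi,\lambda_0)])$ is, as recalled in Section \ref{eisen}, the sum $\sum_{s\in W(A_{P_0},A_Q)}[c(s,\lambda_0)_{s\lambda_0}((\phi_{w_0})_{\lambda_0})]_{\vert e'(Q)}$, the $s$-summand being a cuspidal class of weight $v_s(\rho) - \rho_{\vert\frak{a}_Q}$ for the unique $v_s\in W^Q$ with $v_s(\rho) - \rho_{\vert\frak{a}_Q} + s\lambda_0 = 0$. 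Imposing that a summand live in the top degree $\mathrm{cd}(\Gamma)$ pins down $v_s$, hence, with $\lambda_0 = \rho_{P_0}$ fixed, isolates a single value of $s$ in each such sum — the one reducing, up to the chosen $\sG(\Q)$-conjugation, to the identity intertwining operator. Its contribution is $[\phi_{w_0}]$ scaled by the value of $\chi$ on the finite-adelic part of $g$; since the remaining $s$ run over genuine permutations of the torus coordinates, their contributions either vanish or fall below degree $\mathrm{cd}(\Gamma)$ and drop out. Assembling these contributions over $\sQ\in D$ — the dependence on $g$ runs precisely through the values of $\chi$ on $\mathcal{T}_m$, equivalently on $\mathcal{L}_m/\mathcal{U}_m$ — and transporting the result through the isomorphism $\oplus_{\sQ\in D}H^*(e'(Q),\C)\,\tilde{\longrightarrow}\,H^*(e'(P_0),\C)\otimes\C[\mathcal{L}_m/\mathcal{U}_m]$ of the Proposition above, one finds that $r^{\mathrm{cd}(\Gamma)}_{\{\sP_0\}}([E(\phi_{w_0},\chi,\lambda_0)])$ is exactly the boundary class determined by the pair $(\phi_{w_0},\chi)$ — in particular non-zero, which re-confirms the non-triviality.

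The main obstacle is the holomorphy at $\lambda_0 = \rho_{P_0}$. Because $\lambda_0$ sits on the edge of the convergence cone it is beyond the reach of the soft unitarity argument available in case $(\mathrm{II})$, and one must instead control the full family of standard intertwining operators $M(w,\lambda)$, $w\in W$, at this single point — which rests on the archimedean local analysis of \cite{S1} together with the non-vanishing of Dirichlet $L$-functions at $s=1$. The computation of the restriction is more bookkeeping than difficulty, but it does require handling all $\Gamma$-conjugacy classes of minimal parabolic $\Q$-subgroups at once, i.e. carrying along the finite-adelic variable encoded by $\C[\mathcal{L}_m/\mathcal{U}_m]$.
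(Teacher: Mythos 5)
The paper does not actually prove this statement: it is quoted verbatim as Theorem 9.1 of \cite{S1}, and the only indication given here of its proof is the remark that ``a detailed investigation of certain local and global questions regarding the constant terms of the Eisenstein series is decisive.'' Your reconstruction follows exactly that route --- holomorphy at $\lambda_0=\rho_{P_0}$ via the Gindikin--Karpelevich factorization of the constant term along $\sP_0$ and the finiteness of $L(1,\chi_i)/L(2,\chi_i)$ for $\chi_i\neq 1$, and the restriction computed by observing that, since $\Gamma_M\setminus Z_M$ is zero-dimensional for the minimal parabolic, only the term with $v_s=w_0$ and $s=1$ can survive in degree $\mathrm{cd}(\Gamma)$ --- which is the argument of \cite{S1}, so your proposal is correct and takes essentially the same approach as the source.
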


\begin{corollary}\label{Cormin} Let $\Gamma(q) \subset SL_n(\Z)$ be the principal congruence subgroup of level $q = p^{\nu}$, $p \neq 2$ a prime, and $\nu \geq 1$, 
Let $H^{\textrm{cd}(\Gamma)}_{\{\sP_0\}}(\Gamma \setminus X, \C)$ be the space of cohomology classes in $H^{\textrm{cd}(\Gamma)}(\Gamma \setminus X, \C)$ which restrict non-trivially to the cohomology of a face $e'(Q)$, $Q \in \{\sP_0\}$. Then 
\begin{equation}
\dim_{\C}H^{\textrm{cd}(\Gamma)}_{\{\sP_0\}}(\Gamma(q) \setminus X, \C) \geq (\frac{1}{2} (p^{\nu} - 1) - 1)^{n-1}. 
\end{equation}
In particular, if $\nu$ tends to infinity the dimension is unbounded.
\end{corollary}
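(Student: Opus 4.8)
The plan is to obtain the estimate as a direct corollary of Theorem~\ref{Eisminimal}: one produces one non-trivial cohomology class for each admissible quasi-character $\chi$, checks that these classes are linearly independent because the boundary restriction map separates the underlying characters, and then counts the admissible $\chi$.

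First I set $m=q=p^{\nu}$ and reduce to characters. In degree $\mathrm{cd}(\Gamma)$ the boundary cohomology attached to $\{\sP_{0}\}$ involves only the Kostant summand $F_{\mu_{w_{0}}}$: since $\ell(w_{0})=\dim N_{0}=\mathrm{cd}(\Gamma)$ while the base $\Gamma_{M_{0}}\setminus Z_{M_{0}}$ of the fibration of $e'(P_{0})$ is $0$-dimensional (the torus ${}^{0}M_{0}$ is finite), no $w$ with $\ell(w)<\mathrm{cd}(\Gamma)$ can contribute in total degree $\mathrm{cd}(\Gamma)$. Thus, by the isomorphism $\bigoplus_{\sQ\in D}H^{\mathrm{cd}(\Gamma)}(e'(Q),\C)\xrightarrow{\ \sim\ }H^{\mathrm{cd}(\Gamma)}(e'(P_{0}),\C)\otimes\C[\mathcal L_{m}/\mathcal U_{m}]$, the relevant boundary space is $F_{\mu_{w_{0}}}\otimes\C[\mathcal L_{m}/\mathcal U_{m}]=\bigoplus_{\chi}F_{\mu_{w_{0}}}\otimes V_{\chi}$, the sum over the characters $\chi$ of $\mathcal T_{m}$ that are trivial on ${}^{0}\sT(\Q)\cap L(1)$. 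Each such $\chi$, extended trivially to a quasi-character in $\hat{\sT}_{\A}(\eta_{w_{0}})$ as explained before Theorem~\ref{Eisminimal}, is a tuple $\chi=(\chi_{1},\ldots,\chi_{n-1})$ under $\sT(\Q)\setminus\sT(\A_{f})/(\sT(\A_{f})\cap L(q))\cong\bigl[(\pm 1)\setminus(\Z/q\Z)^{\times}\bigr]^{\,n-1}$, with each $\chi_{i}$ a character of the cyclic group $(\pm 1)\setminus(\Z/q\Z)^{\times}$ of order $\tfrac12(p^{\nu}-1)$.

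Next, for every such $\chi$ all of whose components $\chi_{i}$ are non-trivial, Theorem~\ref{Eisminimal} gives a closed harmonic form $E(\phi_{w_{0}},\chi,\lambda)$ holomorphic at $\lambda_{0}=\rho_{P_{0}}$ and representing a non-trivial class $[E(\phi_{w_{0}},\chi,\lambda_{0})]\in H^{\mathrm{cd}(\Gamma)}(\Gamma\setminus X,\C)$ whose image under
\[
r^{\mathrm{cd}(\Gamma)}_{\{\sP_{0}\}}\colon H^{\mathrm{cd}(\Gamma)}(\Gamma\setminus X,\C)\longrightarrow H^{\mathrm{cd}(\Gamma)}(e'(P_{0}),\C)\otimes\C[\mathcal L_{m}/\mathcal U_{m}]
\]
is the class $[\phi_{w_{0}}]\otimes v_{\chi}$ one started from, a non-zero vector of $F_{\mu_{w_{0}}}\otimes V_{\chi}$; in particular it lies in $H^{\mathrm{cd}(\Gamma)}_{\{\sP_{0}\}}(\Gamma\setminus X,\C)$. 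Because $\C[\mathcal L_{m}/\mathcal U_{m}]=\bigoplus_{\chi}V_{\chi}$ is the decomposition into eigenspaces for the commuting action of $\mathcal T_{m}$, the vectors $[\phi_{w_{0}}]\otimes v_{\chi}$ attached to distinct admissible $\chi$ are linearly independent; being the $r^{\mathrm{cd}(\Gamma)}_{\{\sP_{0}\}}$-images of the classes $[E(\phi_{w_{0}},\chi,\lambda_{0})]$, those classes are themselves linearly independent in $H^{\mathrm{cd}(\Gamma)}_{\{\sP_{0}\}}(\Gamma\setminus X,\C)$. Finally, an admissible $\chi$ is a tuple of $n-1$ non-trivial characters of a cyclic group of order $\tfrac12(p^{\nu}-1)$, so there are exactly $\bigl(\tfrac12(p^{\nu}-1)-1\bigr)^{n-1}$ of them, which gives the asserted inequality; the right-hand side tends to $\infty$ with $\nu$.

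The substantive difficulty is packaged entirely into Theorem~\ref{Eisminimal}, which we take from \cite{S1}: the holomorphy of $E(\phi_{w_{0}},\chi,\lambda)$ at $\lambda_{0}=\rho_{P_{0}}$ --- the ``most singular'' point, where poles of Borel Eisenstein series typically occur --- together with the explicit form of its constant terms along the minimal parabolics, the key mechanism being that the non-principality of each $\chi_{i}$ forces the governing rank-one $c$-functions to be holomorphic there. Granting that input, the only matters needing care in the present argument are the identification of the characters compatible with $\eta_{w_{0}}$ and the elementary counting of admissible tuples; I do not expect further obstacles, beyond keeping the adelic and classical normalisations consistent.
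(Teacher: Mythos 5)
Your proof is correct and is essentially the paper's own argument: produce one class per admissible $\chi$ via Theorem~\ref{Eisminimal}, deduce linear independence from the eigenspace decomposition $\C[\mathcal{L}_q/\mathcal{U}_q]=\bigoplus_\chi V_\chi$ under the boundary restriction map, and count the tuples with all components non-trivial (you also inherit the paper's use of $\tfrac12(p^{\nu}-1)$ for the order of $(\pm 1)\backslash(\Z/p^{\nu}\Z)^{\times}$ rather than $\tfrac12\,p^{\nu-1}(p-1)$, so no discrepancy arises between the two texts). The only addition is your explicit linear-independence step, which the paper leaves implicit.
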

\begin{proof}
 The Eisenstein cohomology classes $[E(\phi_{w_0}, \chi, \lambda_0)]$ where $\chi = (\chi_1, \ldots, \chi_{n-1})$ ranges over all characters occurring  in the decomposition  $\C[\mathcal{L}_q/\mathcal{U}_q] = \oplus V_{\chi}$ with $\chi_i \neq 1$ for all $i = 1, \ldots, n-1$, generate a subspace in $H^{\textrm{cd}(\Gamma)}_{\{\sP_0\}}(\Gamma \setminus X, \C)$. Since the characters in question are uniquely determined by their restriction $\chi: \mathcal{T}_{p^{\nu}} \longrightarrow \C^{\times}$, a counting of the characters which satisfy the given condition leads to the lower bound as claimed.
\end{proof}

\section{Comparison with prior work and extensions}\label{Conclude} 

\subsection{The adjunction of corners and the Steinberg module} The construction by adjoining corners to  a given arithmetic quotient $\Gamma \setminus X$ as in Section \ref{arithquo} has various  consequences for the cohomology of $\Gamma$, among them,  first, $H^i(\Gamma; \Z[\Gamma]) = 0$ except  in degree $i = \mathrm{d}(G) - \textrm{rk}_{\Q} \sG$, where it is a free module $I$ of infinite rank, and, second, there is an isomorphism
\begin{equation}
H^i(\Gamma; A) \cong H_{(\mathrm{d}(G) - \textrm{rk}_{\Q} \sG) - i}(\Gamma; I \otimes A), \quad i \in \Z,
\end{equation}
for any $\Gamma$-module $A$. In particular, the cohomological dimension of $\Gamma$, denoted $\textrm{cd}(\Gamma)$,  is $\mathrm{d}(G) - \textrm{rk}_{\Q} \sG$. 
The dualizing module $I$ can be realized on the reduced  homology group $\tilde{H}_{\textrm{rk}_{\Q} \sG - 1}(\mathcal{T}_{\sG}, \Z)$ of the Tits building $\mathcal{T}_{\sG}$. The natural action of $\sG(\Q)$ on   
$\mathcal{T}_{\sG}$ induces an action on this homology group such that $st_{\sG}: = \tilde{H}_{\textrm{rk}_{\Q} \sG - 1}(\mathcal{T}_{\sG}, \Z)$ is a module for $\sG(\Q)$, called the Steinberg module.
Consequently, in  the top degree $\textrm{cd}(\Gamma)$, the duality relation (with $A = \Q$) reads 
\begin{equation}
H^{\textrm{cd}(\Gamma)}(\Gamma, \Q) \cong H_0(\Gamma; I \otimes \Q) \cong (st_{\sG})_{\Gamma}
\end{equation}
where the subscript $( - )_{\Gamma}$ denotes the space of coinvariants under $\Gamma$.

 This close relation between the cohomology of an arithmetic subgroup $\Gamma \subset \sG(\Q)$ in the degree of the virtual cohomological dimension and the Steinberg module for $\sG(\Q)$ was used by several authors, among them Lee-Szczarba \cite{LeeSz}, Ash \cite{AAunstable} in the case $SL_n$,  and Reeder \cite{Reeder} for general $\sG$,  to detect non-vanishing cohomology classes in  $H^{\mathrm{cd}(\Gamma)}(\Gamma \setminus X; \C)$.
 More recently, by a close analysis of the Steinberg module, this approach was taken up by Church et al. \cite{CFP} to prove vanishing and non-vanishing results for 
 $H^{\textrm{cd}(\Gamma)}(\Gamma, \Q)$ where $\Gamma = SL_n(\mathcal{O}_k)$,  $\mathcal{O}_k$ the ring of integers in an algebraic number field $k$.
 
 In a  different direction,  in the case of a congruence subgroup $\Gamma(p) \subset SL_n/\Q$ of prime level $p$,  Miller et al. \cite{MPP}  prove that the $\Gamma(p)$-invariant map
$\tilde{H}_{n - 2}(\mathcal{T}_{SL_n}, \Z) \longrightarrow \tilde{H}_{n - 2}(\mathcal{T}_{SL_n}/\Gamma(p), \Z)$, induced by the quotient map $\mathcal{T}_{SL_n} \longrightarrow \mathcal{T}_{SL_n}/\Gamma(p)$, is surjective. Therefore, the size of the right hand side provides a lower bound for the space $(st_{SL_n})_{\Gamma(p)}$, thus, for  the dimension of $H^{\textrm{cd}(\Gamma(p))}(\Gamma(p), \Q)$. 

In the case $n=3$, their result is weaker than the one obtained in \cite{LeeS}, \cite{LeeS2} by using the theory of automorphic forms to construct non-trivial cohomology classes in $H^*(\Gamma, \C)$, where $\Gamma$ is a torsion-free congruence subgroup of $SL_3/\Q$. 
This also  the case for arbitrary $n$ as we show now. 

\subsection{Lower bounds for the dimension of $H^{\mathrm{cd}(\Gamma(q))}(\Gamma(q) \setminus X; \C)$} 
If we put together the various results in Sections \ref{paramatter},  \ref{ParaII}, and \ref{minpar} we can derive lower bounds for the dimension of the top degree cohomology group $H^{\mathrm{cd}(\Gamma(q))}(\Gamma(q) \setminus X; \C)$ of a given
principal congruence subgroup $\Gamma(q) \subset SL_n(\Z)$ of level $q = p^{\nu}$, $p \neq 2$ a prime, and $\nu \geq 1$. Our starting point is the decomposition
 \begin{equation}
H^*(\Gamma(q) \setminus X, \C) = H^*_{\textrm{cusp}}(\Gamma(q) \setminus X, \C) \oplus \bigoplus_{\{\sP\} \neq \{\sG\}}H^*_{\{\sP\}}(\Gamma(q) \setminus X, \C).
\end{equation}
of $H^*(\Gamma(q) \setminus X, \C)$ into the cuspidal cohomology and the Eisenstein cohomology, defined to be
\begin{equation} 
H^*_{\textrm{Eis}}(\Gamma(q) \setminus X, \C): =  \bigoplus_{\{\sP\} \neq \{\sG\}}H^*_{\{\sP\}}(\Gamma(q) \setminus X, \C).
\end{equation}

For $n > 3$,  the cuspidal cohomology $H^*_{\textrm{cusp}}(\Gamma(q) \setminus X, \C)$ vanishes in degree $\mathrm{cd}(\Gamma(q)) = \frac{n(n-1)}{2}$, as pointed out in Section \ref{arithquo}. In the case $n = 2$, one has precise knowledge about $H^1_{\mathrm{cusp}}(\Gamma(q) \setminus, \C)$, $\Gamma(q) \subset SL_2(\Z)$, see Example \ref{cusptwo}. For $n = 3$, there are only some non-vanishing results but no exact formulas for the dimension in degree $\mathrm{cd}(\Gamma(q)) = 3$, $\Gamma(q) \subset SL_3(\Z)$, see \cite[Section 3]{LabSchw}, \cite{LabSchwJNT} or \cite{LeeS2}.

With regard to the Eisenstein cohomology, using Proposition \ref{propparamatter}, we have 
\begin{equation}
H^{\mathrm{cd}(\Gamma(q))}_{\mathrm{Eis}}(\Gamma(q) \setminus X; \C) =   H^{\textrm{cd}(\Gamma)}_{\{\sP_0\}}(\Gamma(q) \setminus X, \C) \oplus \bigoplus_{\{\sP\} \in \mathrm{Ass}_{\sG}^{\mathrm{cd}}}
H^{\textrm{cd}(\Gamma)}_{\{\sP\}}(\Gamma(q) \setminus X, \C) 
\end{equation}
where $\mathrm{Ass}_{\sG}^{\mathrm{cd}}$ denotes the finite set of classes of associate proper standard parabolic $\Q$-subgroups $\sP_J$ whose defining set $J \subset \Delta_{\Q}$ belongs to the family $\mathcal{J}^{\mathrm{cd}}$. Note that an associate class $\{\sP\}$ falls into $\sG(\Q)$-conjugacy classes of parabolic $\Q$-subgroups, thus, the defining set $J$ is not uniquely determined by $\{\sP\}$.

For each of the summands  $H^{\textrm{cd}(\Gamma)}_{\{\sP\}}(\Gamma(q) \setminus X, \C)$ with $\{\sP\} \in \mathrm{Ass}_{\sG}^{\mathrm{cd}}$
there is  the formula
\begin{equation}
\dim_{\C} H^{\mathrm{cd}(\Gamma)}_{\{\sP_J\}}(\Gamma \setminus X; \C) = \mathrm{conj}_{\sG}[\{\sP_J\}]
 \cdot \mathrm{conj}_{\Gamma}(\sP_J) \cdot \dim_{\C}H^{\mathrm{cd}(\Gamma)}_{\mathrm{cusp}}(e'(P_J), \C)
\end{equation}
where $\sP_J \in \{\sP\}$. Thus, by taking together the lower bound for $\dim_{\C}H^{\textrm{cd}(\Gamma)}_{\{\sP_0\}}(\Gamma(q) \setminus X, \C)$, given in Corollary \ref{Cormin},  and the results in Sections  \ref{paramatter},  \ref{ParaII} we obtain a lower bound for the cohomology group in the top degree.

We exemplify the procedure in the following example:
The set  $\mathrm{Ass}_{\sG}^{\mathrm{cd}}$ attached to the group $\sG = SL_5/\Q$ consists of four classes of associate proper parabolic $\Q$-subgroups. First, one consists of the four $\sG(\Q)$-conjugacy classes represented by the standard parabolic subgroups $\sP_J$ of parabolic rank three,  that is, where $J = \{\alpha_1\}, \{\alpha_2\}, \{\alpha_3\},$ and $\{\alpha_4\}$. Second, the standard parabolic $\Q$-subgroups of parabolic rank two fall into two associate classes, one, say $\{\sQ_2\}$, consists of the block-2 parabolic subgroups whereas the other one, say $\{\sQ_3\}$, consists of the block-3 parabolic subgroups. Finally, there is the mixed case, denoted $\{\sR\}$; it falls into  two different $\sG(\Q)$-conjugacy classes, namely, the one of $\sP_J$ with $J = \{\alpha_1, \alpha_2, \alpha_4\}$, and the one of $\sP_{J'}$ with $J' = \{\alpha_1, \alpha_3, \alpha_4\}$. The two other maximal standard parabolic $\Q$-subgroups do not account for  a class in $\mathrm{Ass}_{\sG}^{\mathrm{cd}}$. Then, given $\Gamma(q) \subset SL_5(\Q)$, we obtain the estimate

\begin{equation}
\dim_{\C}H^{\mathrm{cd}(\Gamma(q))}(\Gamma(q) \setminus X; \C) \geq  (\frac{1}{2} (q - 2))^4 + 4\cdot \frac{1}{2^3} \prod_{i=3}^5(q^i - 1) g_{\Delta(q)} + RS_5(q)
\end{equation}
where $RS_5(q)$ denotes the sum of the remaining summands
\begin{equation}
\dim_{\C}H^{\textrm{cd}(\Gamma)}_{\{\sQ_2\}}(\Gamma(q) \setminus X, \C), \quad \dim_{\C}H^{\textrm{cd}(\Gamma)}_{\{\sQ_3\}}(\Gamma(q) \setminus X, \C),\quad  \text{and} \dim_{\C}H^{\textrm{cd}(\Gamma)}_{\{\sR\}}(\Gamma(q) \setminus X, \C). 
\end{equation}
A formula, depending only on $q$, for each of these terms requires explicit knowledge of the cuspidal cohomology groups $H^{\mathrm{cd}(\Gamma(q))}_{\mathrm{cusp}}(e'(P), \C)$, thus,
at least for the latter two cases,  a dimension formula for spaces of cohomological cuspidal automorphic forms for congruence subgroups of $SL_3(\Q)$. This is currently out of reach. However, as mentioned before, one has non-vanishing results for congruence groups of sufficiently high level, see \cite{LabSchw}, \cite{LabSchwJNT}, \cite{LeeS2}.
\begin{remark}For arbitrary $n \geq 3$ and $\Gamma(q) \subset SL_n(\Q)$, $q = p^{\nu} > 2$, $p$ a prime, the lower bound for the dimension of the cohomology of $\Gamma(q)$ in the top degree would read as
\begin{equation}\label{dim}
 \dim_{\C}H^{\mathrm{cd}(\Gamma(q))}(\Gamma(q) \setminus X; \C) \geq   (\frac{1}{2} (q - 1) - 1)^{n-1} + (n - 1)\cdot \frac{1}{2^{n-2}}\cdot \prod_{i = 3}^n(q^i - 1) \cdot g_{\Delta(q)} + RS_n(q).
\end{equation}
where $RS_n(q)$ has the analogous meaning.
Note that the lower bound given by formula (\ref{dim}), taken without the term $RS_n(q)$, exceeds the one given  in \cite[Section 4]{MPP}.
\end{remark}

\subsection{Other groups and fields} Let $\Gamma$ be a torsion-free  arithmetic subgroup of a connected semi-simple algebraic $\Q$-group $\sG$ of positive $\Q$-rank. For the sake simplicity we assume that $\sG$ is $\Q$-split and belongs to the family of classical groups, meaning, beside general linear groups,  symplectic groups, orthogonal groups or unitary groups.
Given a   finite-dimensional irreducible representation $(\nu, E)$ of the real Lie group $G = \sG(\R)$ on a complex vector space, the  cohomology $H^*(\Gamma \setminus X, E)$ of the locally symmetric space $\Gamma \setminus X$ has  a direct sum decomposition,
\begin{equation}
H^*(\Gamma \setminus X, E) = H^*_{\textrm{cusp}}(\Gamma \setminus X, E) \oplus \bigoplus_{\{\sP\}}H^*_{\{\sP\}}(\Gamma \setminus X, E)
\end{equation}
into the subspace of classes represented by cuspidal automorphic forms for $\sG$ with respect to $\Gamma$ and the  Eisenstein cohomology. The latter space is  decomposed according to the 
classes $\{\sP\}$ of associate proper parabolic $\Q$-subgroups of $\sG$. Each summand is built up by Eisenstein series (derivatives or residues of such) attached to cuspidal automorphic representations  $\pi$ on the Levi components of elements in $\{\sP\}$. 

Being interested in the cohomology group $H^{\mathrm{cd}(\Gamma)}(\Gamma \setminus X, E)$ in the degree of the cohomological dimension of $\Gamma$, it should be possible to determine which associate classes $\{\sP\}$ of parabolic $\Q$-subgroups of $\sG$ finally contribute to this decomposition. Recall that a parabolic $\Q$-subgroup is the semi-direct product of its unipotent radical and any Levi subgroup. These Levi subgroups are products of groups of $GL$-type and of isometry groups.

For example, in the case of the symplectic group $Sp_n/\Q$ of rank $n$, up to $\sG(\Q)$-conjugay,  a maximal parabolic $\Q$-subgroup has the form $\sP_r \cong \sL_r \sN_r$, $r = 1, \ldots, n$, with Levi subgroup $\sL_r \cong  GL_r \times Sp_{n-r}$ if $r < n$, and $\sL_n \cong GL_n$ if $r = n$, and $\sN_r$ is the unipotent radical. Note that such a parabolic $\Q$-subgroup is conjugate to its opposite parabolic subgroup, thus, the associate class $\{\sP_r\}$ coincides with the $\sG(\Q)$-conjugacy class $\sP_r$. All other standard  parabolic $\Q$-subgroups are expressible as intersections of the proper standard maximal parabolic $\Q$-subgroups.

A result, analogous to Proposition \ref{propparamatter}, hinges on an explicit knowledge of the range in which the Levi subgroup of a given parabolic $\Q$-subgroup $\sP$ of $\sG$ has possibly  non-vanishing cuspidal cohomology, that is, $H^*_{\mathrm{cusp}}(e'(P), E)$ is non-trivial. In view of the final construction of Eisenstein cohomology classes, this requires a thorough understanding of the cuspidal cohomology of arithmetic groups in classical groups. We refer to \cite{LS4} and \cite{S7}.

With regard to the existence of Eisenstein cohomology classes in $H^*_{\{\sP_0\}}(\Gamma \setminus X, E)$ where $\{\sP_0\}$ denotes the associate class minimal parabolic $\Q$-subgroups of $\sG$ one has to expect a result similar to Theorem \ref{Eisminimal} and its Corollary. In the case $\sG = Sp_n$, see \cite{Lai}, and, in the case of a generic coefficient system, see \cite{S4}.

 If $\Gamma$ is an  arithmetic subgroup of a connected semi-simple algebraic group $\sG$ defined over some algebraic number field $k$ and  of positive $k$-rank one can pose analogous questions. Work of Harder in the case $GL_2/k$ \cite{Ha3} stands as initial guideline, see also \cite{Ha4} or \cite{S10}.

\bibliographystyle{amsalpha}

\begin{thebibliography} {99}



\bibitem{AA}
    A.~Ash,  Cohomology of congruence subgroups of $SL(n,\Z)$, Math. Ann. \textbf{249} (1) (1980), 55-73.

\bibitem{AAunstable}
    A.~Ash, Unstable cohomology of $SL(n,\mathcal{O})$, J. Algebra \textbf{167} (2) (1994) 330-342.

\bibitem{BoLS}
A.~Borel, J.--P.~Labesse, J.~Schwermer, On the cuspidal cohomology
of S--arithmetic groups of reductive groups over number fields,
    Compositio Math. \textbf{102} (1996), 1--40.

\bibitem{BS}
A.~Borel, J-P.~Serre, Corners and arithmetic groups,
    Comment. Math. Helvet. \textbf{48} (1973), 436-491.




\bibitem{BW}
A.~Borel, N.~Wallach,
Continuous cohomology, discrete subgroups, and representations of reductive groups
Annals of Mathematics Studies 94,
Princeton University Press, Princeton, NJ, 1980.

\bibitem{CFP}
T.~Church, B.~Farb, A.~Putman,
Integrality in the Steinberg module and the top-dimensional cohomology of $SL_n(\mathcal{O}_K)$,
to appear in American J. Mathematics

\bibitem{CFP2}
T.~Church, B.~Farb, A.~Putman, A stability conjecture for the unstable cohomology of $SL_n(\Z)$, mapping class groups, and $Aut(F_n)$, in: Algebraic Topology: Applications and New Directions, 55-70, Contemp. Math. 620, Amer. Math. Soc., Providence, RI. 


\bibitem{F1}
J.~Franke, Harmonic analysis in weighted $L_2$--spaces,
    Ann.~Sci.~\' Ecole Norm.~Sup.~\textbf{31} (2) (1998), 181--279.



\bibitem{F--S}
J.~Franke, J.~Schwermer, A decomposition of spaces of automorphic
forms, and the Eisenstein cohomology of arithmetic groups,
    Math.~Ann.~\textbf{311} (4) (1998), 765--790.


\bibitem{gs2} N.~Grbac, J.~Schwermer, On residual cohomology classes attached to relative rank one Eisenstein series for the symplectic group, Int. Math. Res. Not. (2011), 1654--1705.

\bibitem{gs3} N.~Grbac, J.~Schwermer, Eisenstein series, cohomology of arithmetic groups, and automorphic $L$-functions at half integral arguments, Forum Math. 26 (2014), 1635--1662.

\bibitem{gs4} N.~Grbac, J.~Schwermer, A construction of residues of Eisenstein series and related square-integrable classes in
the cohomology of arithmetic groups of
low $k$-rank,  Forum Math. ??


\bibitem{Ha3}
G.~Harder, Eisenstein cohomology of arithmetic groups. The case $GL_2$, Invent. Math. \textbf{89} (1987), 37-118.

\bibitem{Ha4}
G.~Harder, Some results on the Eisenstein cohomology of arithmetic subgroups of $GL_n$. In: Labesse, J.-P., Schwermer, J. (eds.) Cohomology of Arithmetic Groups and Automorphi. Forms. Lect. Notes in Maths., vol. 1447, pp. 85-154, Berlin Heidelberg New York: Springer 1990. 



\bibitem{HC}
Harish-Chandra, Automorphic Forms on Semi-simple Lie Groups, Lect. Notes in Maths. 62, 
Berlin--Heidelberg--New York: Springer, 1968.

\bibitem{Hecke}
    E.~Hecke, \"Uber ein Fundamentalproblem aus der Theorie der elliptischen Modulfunktionen, Abh. math. Seminar der hamburg. Universit\"at, \textbf{6} (1928), 235-257.





\bibitem{Ko}
B.~Kostant, Lie algebra cohomology and the generalized Borel-Weil theorem, Ann. Math. \textbf{74} (1961), 329-387.

 
\bibitem{LabSchw}
J.P.~Labesse, J.~Schwermer, On liftings and cusp cohomology of arithmetic groups, Invent.~Math.~\textbf{83} (1986), 383--401.

\bibitem{LabSchwJNT}
J.P.~Labesse, J.~Schwermer, Central morphisms and cuspidal automorphic representations, J. Number Theory, \textbf{205} (2019), 170-193.


\bibitem{Lai}
K.F.~Lai, On the cohomology of congruence subgroups of symplectic groups, Nagoya Math. J. \textbf{85} (1982), 155-174.

\bibitem{La1}
R.P.~Langlands, On the Functional Equations Satisfied by
Eisenstein Series, Lect. Notes in Maths. 544, Springer,
Berlin--Heidelberg--New York, 1976.




\bibitem{LeeS}
R.~Lee,  J.~Schwermer, Cohomology of arithmetic subgroups of $SL_3$ at infinity, J. Reine Angew. Math. \textbf{330} (1982), 100-131.

\bibitem{LeeS2}
R.~Lee,  J.~Schwermer, The Lefschetz number of an involution on the space of harmonic cusp forms  of $SL_3$, Invent.~Math.~\textbf{73} (1983), 189-239.





\bibitem{LeeSz}
R.~Lee,  R.H. Szczarba, On the homology and cohomology of congruence subgroups, Invent.~Math.~\textbf{33} (1976), 15-53.




\bibitem{LS3}
J.S.~Li, J.~Schwermer, On the Eisenstein cohomology of arithmetic
groups, Duke Math.~J.~\textbf{123} (2004), 141--169.


\bibitem{LS4}
J.S.~Li, J.~Schwermer, On the cuspidal cohomology of arithmetic groups, American J. Mathematics \textbf{131} (2009), 1431-1464.

\bibitem{MPP}
J.~ Miller,  P.~Patzt,  A.~Putman,
On the top dimensional cohomology groups of congruence subgroups of $SL_n(\Z)$, preprint November 2019



\bibitem{Reeder}
M.~Reeder, The Steinberg module and the cohomology of arithmetic groups, J. Algebra~\textbf{141} (1991), 287-315.



\bibitem{S1}
J.~Schwermer, Eisensteinreihen und die Kohomologie von Kongruenzuntergruppen von $SL_n(\Z)$, Bonner Mathematische Schriften, Nr. 99, Bonn 1977.

\bibitem{S1CR}
J.~Schwermer, Sur la cohomologie de  $SL_n(\Z)$ \`a l'infini et les series d'Eisenstein, C. R. Acad. Sc. Paris \textbf{289} (1979), 413-416.


\bibitem{S2}
J.~Schwermer, Kohomologie arithmetisch definierter Gruppen und
Eisensteinreihen, Lect. Notes in Maths. 988, 
Berlin--Heidelberg--New York: Springer, 1983.

\bibitem{S3}
J.~Schwermer, 
Holomorphy of Eisenstein series at special points and cohomology of arithmetic subgroups of $SL_n(\Q)$,
J. Reine Angew. Math. \textbf{364} (1986), 193 - 220.

\bibitem{S33}
J.~Schwermer, 
On arithmetic quotients of the Siegel upper half space of degree two,
 Compositio Math. \textbf{58} (1986), 233-258.



\bibitem{S4}
J.~Schwermer, Eisenstein series and cohomology of arithmetic
groups: The generic case, Invent.~Math.~\textbf{116}, (1994), 481--511.

\bibitem{S7}
J.~Schwermer,  The cohomological approach to cuspidal automorphic representations, In: Automorphic Forms and L-functions I: Global Aspects, ed. D. Ginzburg et al.  Contemporary Mathematics, volume 488, American Mathematical Society: Providence, RI, 2009, pp. 257 - 287.


\bibitem{S10}
J.~Schwermer, Non-vanishing  square-integrable automorphic cohomology classes -  the case $GL(2)$ over a central division algebra, to appear in Pacific J. Mathematics.








\end{thebibliography}

\end{document}